\newtheorem{prop}{Proposition}[section]
\newtheorem{rem}[prop]{Remark}
\newtheorem{lem}[prop]{Lemma}
\newtheorem{theo}[prop]{Theorem}
\newtheorem{cor}[prop]{Corollary}
\newcommand{\beq}{\begin{eqnarray}}
\newcommand{\beqq}{\begin{eqnarray*}}
\newcommand{\eeq}{\end{eqnarray}}
\newcommand{\eeqq}{\end{eqnarray*}}
\title{Bougerol's identity in law and extensions}
\author{ S. Vakeroudis \thanks{Laboratoire de Probabilit\'{e}s et Mod\`{e}les
Al\'{e}atoires (LPMA) CNRS : UMR7599,  Universit\'{e} Pierre et Marie
Curie - Paris VI,  Universit\'{e} Paris-Diderot - Paris VII, 4 Place Jussieu, 75252 Paris Cedex 05, France.
E-mail: stavros.vakeroudis@upmc.fr }
\thanks{Probability and Statistics Group, School of Mathematics, University of Manchester,
Alan Turing Building, Oxford Road, Manchester M13 9PL, United Kingdom. } }
\date{\today}
\begin{document}

\maketitle
\begin{abstract}
We present a list of equivalent expressions and extensions of Bougerol's celebrated identity in law,
obtained by several authors. We recall well-known results and the latest progress of the
research associated with this celebrated identity in many directions, we give some new results
and possible extensions and we try to point out open questions.
\end{abstract}

$\vspace{5pt}$
\\
\textbf{AMS 2010 subject classification:} Primary: 60J65, 60J60, 60-02, 60G07; \\
secondary: 60G15, 60J25, 60G46, 60E10, 60J55, 30C80, 44A10.

$\vspace{5pt}$
\\
\textbf{Key words:} Bougerol's identity, time-change, hyperbolic Brownian motion, subordination, Gauss-Laplace transform, planar Brownian motion,
Ornstein-Uhlenbeck processes, two-dimensional Bougerol's identity, local time, multi-dimensional Bougerol's identity, Bougerol's diffusion,
peacock, convex order, Bougerol's process.

\tableofcontents

\section{Introduction}\label{secintro}
\renewcommand{\thefootnote}{\fnsymbol{footnote}}
Bougerol's celebrated identity in law has been the subject of research for several authors since first formulated
in 1983 \cite{Bou83}. A reason for this study is on the one hand its interest from the mathematical point of view
and on the other hand its numerous applications, namely in Finance (pricing of Asian options etc.)-see e.g. \cite{Yor92,Duf00,Yor01}.
However, one still feels that some better understanding remains to be discovered.

This paper is essentially an attempt to collect all the known results (up to now) and to give a (full)
survey of the several different equivalent expressions and extensions (to other processes, multidimensional
versions, etc.) in a concise way. We also provide a bibliography, as complete as possible.
For the extended proofs we address the reader to the original articles.

Bougerol's remarkable identity states that
(see e.g. \cite{Bou83,ADY97} and \cite{Yor01} (p. 200)), with
$(B_{u},u\geq0)$ and $(\beta_{u},u\geq0)$ denoting two independent linear Brownian
motions\footnote[4]{When we simply write: Brownian motion, we always mean real-valued Brownian motion starting from 0.
For 2-dimensional Brownian motion we indicate planar or complex BM.}, we have:
\beq\label{boug}
    \mathrm{for} \; \mathrm{fixed} \; t, \ \ \sinh(B_{t}) \stackrel{(law)}{=}
    \beta_{A_{t}(B)} \ ,
\eeq
where $A_{u}(B)=\int^{u}_{0}ds \exp(2B_{s})$  is independent of $(\beta_{u},u\geq0)$.
For a first approach of (\ref{boug}), see e.g. the corresponding Chapters in \cite{ReY99} and in \cite{ChY12}.
In what follows, sometimes for simplicity
we will use the notation $A_{u}$ instead of $A_{u}(\cdot)$.\\
Alili, Dufresne and Yor \cite{ADY97} obtained the following simple proof of Bougerol's identity (\ref{boug}):
\begin{proof}
On the one hand, we define $S_{t}\equiv\sinh(B_{t})$; then, applying It\^{o}'s formula we have:
\beq\label{S}
S_{t}=\int^{t}_{0} \sqrt{1+S_{s}^{2}} \ dB_{s}+\frac{1}{2}\int^{t}_{0}S_{s} \ ds \ .
\eeq
On the other hand, a time-reversal argument for Brownian motion yields: for fixed $t\geq 0$,
\beq\label{Q}
\beta_{A_{t}(B)}=\int^{t}_{0} e^{B_{s}}d\gamma_{s}\stackrel{(law)}{=}
e^{B_{t}}\int^{t}_{0} e^{-B_{s}}d\gamma_{s}\equiv Q_{t} \ ,
\eeq
where $(\gamma_{s},s\geq 0)$ denotes another 1-dimensional Brownian motion, independent
from $(B_{s},s\geq 0)$. \\
Applying once more It\^{o}'s formula to $Q_{t}$, we have:
\beq\label{Q1}
dQ_{t}=\frac{1}{2} Q_{t}dt+(Q_{t}dB_{t}+d\gamma_{t})=\frac{1}{2} Q_{t}dt+\sqrt{Q_{t}^{2}+1} \ d\delta_{t},
\eeq
where $\delta$ is another 1-dimensional Brownian motion, depending on $B$ and on $\gamma$.
From (\ref{S}) and (\ref{Q1}) we deduce that $S$ and $Q$
satisfy the same Stochastic Differential Equation with Lipschitz coefficients, hence, we obtain (\ref{boug}).
\end{proof}
With some elementary computations, from (\ref{boug}) (e.g. identifying the densities of both sides,
for further details see \cite{Vakth11,BDY12a}), we may
obtain the Gauss-Laplace transform of the clock $A_{t}$: for every $x\in\mathbb{R}$,
with $a(x)\equiv\arg\sinh(x)\equiv\log\left(x+\sqrt{1+x^{2}}\right)$
\beq
E\left[\frac{1}{\sqrt{A_{t}}} \exp\left(-\frac{x^{2}}{2A_{t}}\right)\right]=
\frac{a'(x)}{\sqrt{t}} \exp\left(-\frac{a^{2}(x)}{2t}\right).
\eeq
where $a'(x)=(1+x^{2})^{-1/2}$. \\
For further use, we note that Bougerol's identity may be equivalently stated as:
\beq\label{bougerol2abs}
    \sinh(|B_{u}|) \stackrel{(law)}{=} |\beta|_{A_{u}(B)}.
\eeq
Using now the symmetry principle (see \cite{And87} for the original note and \cite{Gal08} for a detailed discussion):
\beq\label{bougerol2sup}
    \sinh(\bar{B}_{u}) \stackrel{(law)}{=} \bar{\beta}_{A_{u}(B)},
\eeq
where, e.g. $\bar{B}_{u}\equiv\sup_{0\leq s\leq u} B_{s}$.

In the remainder of this article we give several versions and generalizations of Bougerol's identity (\ref{boug}).
In particular, in Section \ref{secbougpr} we give extensions of this identity to other processes
(i.e. Brownian motion with drift, hyperbolic Brownian motion, etc.). Section \ref{secsub}
is devoted to some results that we obtain from subordination and some applications to the study
of Bougerol's identity in terms of planar Brownian motion and of  complex-valued Ornstein-Uhlenbeck processes.
In Section \ref{secmulti} we give some 2 and 3 dimensional extensions of Bougerol's identity, first involving
the local time at 0 of the Brownian motion $B$, and second by studying the joint law of 2 and 3 specific processes.
In particular, in Subsection \ref{subsec2dnew} we give a new 2-dimensional extension.
In Section \ref{secbougdif} we generalize Bougerol's identity for the case of diffusions, named \textit{"Bougerol's diffusions"},
followed by some studies in terms of Jacobi processes. Section \ref{secbougpeacock} deals with Bougerol's identity
from the point of view of \textit{"peacocks"} (see this Section for the precise definition, as introduced in e.g. \cite{HPRY11}).
In Section \ref{secopen} we propose some possible directions for further investigation of this "mysterious" identity
in law with its versions and extensions and we give an as full as possible list of references (to the best of author's knowledge) up to now.
Finally, in the Appendix, we present several tables of Bougerol's identity and all the equivalent forms and extensions that we present
in this survey. These tables can be read independently from the rest of the text.

We also note that (sometimes) the notation used from Section to Section may be independent.

\section{Extensions of Bougerol's identity to other processes}\label{secbougpr}

\subsection{Brownian motions with drifts}
Alili, Dufresne and Yor, in \cite{ADY97}, showed the following result:
\begin{prop}\label{propADY}
With $\mu,\nu$ two real numbers, for every $x$ fixed, the Markov process:
\beq\label{Xmunu}
X^{(\mu,\nu)}_{t}\equiv \left(\exp(B_{t}+\mu t)\right)\left(x+\int^{t}_{0} \exp\left(-(B_{s}+\mu s)\right)d(\beta_{s}+\nu s)\right),
\eeq
for every $t\geq0$, has the same law as $(\sinh (Y^{(\mu,\nu)}_{t}), \ t\geq0)$, where $(Y^{(\mu,\nu)}_{t},t\geq0)$
is a diffusion with infinitesimal generator:
\beq
\frac{1}{2} \ \frac{d^{2}}{dy^{2}}+ \left(\mu \tanh(y) + \frac{\nu}{\cosh(y)}\right) \frac{d}{dy} \ ,
\eeq
starting from $y= \arg \sinh (x)$.
\end{prop}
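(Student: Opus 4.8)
The plan is to follow the same strategy as in the Alili--Dufresne--Yor proof of \eqref{boug} recalled above: exhibit a stochastic differential equation with Lipschitz coefficients satisfied by $X^{(\mu,\nu)}$, check that $(\sinh (Y^{(\mu,\nu)}_{t}),\,t\ge 0)$ solves the \emph{same} equation with the \emph{same} initial condition, and conclude by uniqueness (pathwise, hence in law). Note that, contrary to \eqref{boug}, the claim is an identity of processes, so it is precisely this SDE-uniqueness that does the work.

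First I would write $X_{t}\equiv X^{(\mu,\nu)}_{t}=e^{Z_{t}}U_{t}$ with $Z_{t}=B_{t}+\mu t$ and $U_{t}=x+\int_{0}^{t}e^{-Z_{s}}\,d(\beta_{s}+\nu s)$, so that $dU_{t}=e^{-Z_{t}}(d\beta_{t}+\nu\,dt)$. Applying It\^o's formula to the product, and using that $B$ and $\beta$ are independent so that the cross-bracket $\langle e^{Z_{\cdot}},U_{\cdot}\rangle$ vanishes, one gets after elementary simplifications $dX_{t}=X_{t}\,dB_{t}+d\beta_{t}+\big((\mu+\tfrac12)X_{t}+\nu\big)\,dt$. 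The continuous local martingale $X_{t}\,dB_{t}+d\beta_{t}$ has quadratic variation $(X_{t}^{2}+1)\,dt$, so by L\'evy's characterization $X_{t}\,dB_{t}+d\beta_{t}=\sqrt{X_{t}^{2}+1}\,d\delta_{t}$ for a Brownian motion $\delta$ (explicitly, $\delta_{t}=\int_{0}^{t}(X_{s}^{2}+1)^{-1/2}(X_{s}\,dB_{s}+d\beta_{s})$, so no enlargement of the space is needed), whence
\beq\label{eqSDEplan}
dX_{t}=\sqrt{X_{t}^{2}+1}\,d\delta_{t}+\Big(\big(\mu+\tfrac12\big)X_{t}+\nu\Big)\,dt,\qquad X_{0}=x .
\eeq

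On the other side, the diffusion $Y^{(\mu,\nu)}$ with the stated generator solves $dY_{t}=d\hat\beta_{t}+\big(\mu\tanh Y_{t}+\nu/\cosh Y_{t}\big)\,dt$ for some Brownian motion $\hat\beta$ (the drift is bounded, so $Y^{(\mu,\nu)}$ is a genuine non-exploding diffusion). It\^o's formula applied to $S_{t}=\sinh(Y_{t})$, together with $\cosh(Y_{t})=\sqrt{1+\sinh^{2}(Y_{t})}=\sqrt{1+S_{t}^{2}}$, gives $dS_{t}=\cosh(Y_{t})\,d\hat\beta_{t}+\big(\mu\sinh Y_{t}+\nu+\tfrac12\sinh Y_{t}\big)\,dt=\sqrt{1+S_{t}^{2}}\,d\hat\beta_{t}+\big((\mu+\tfrac12)S_{t}+\nu\big)\,dt$, which is exactly \eqref{eqSDEplan}; and $S_{0}=\sinh(\arg\sinh x)=x$ matches the initial condition. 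Since in \eqref{eqSDEplan} the diffusion coefficient $x\mapsto\sqrt{1+x^{2}}$ is globally Lipschitz and the drift is affine, pathwise uniqueness holds, hence uniqueness in law, and therefore $X^{(\mu,\nu)}\stackrel{(law)}{=}(\sinh (Y^{(\mu,\nu)}_{t}),\,t\ge 0)$.

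I expect no serious obstacle here: the content is the two It\^o computations and the observation that they yield the same, well-behaved SDE. The points deserving a word of care are the vanishing of the cross-bracket (which is exactly where the independence of $B$ and $\beta$ is used), the L\'evy-characterization reduction to the scalar equation \eqref{eqSDEplan}, and the boundedness of $y\mapsto\mu\tanh y+\nu/\cosh y$ that legitimizes working with $Y^{(\mu,\nu)}$ as a genuine diffusion. As a sanity check, taking $\mu=\nu=0$ and $x=0$ reduces \eqref{eqSDEplan} to \eqref{Q1}, so that $X^{(0,0)}_{\cdot}=e^{B_{\cdot}}\!\int_{0}^{\cdot}e^{-B_{s}}d\beta_{s}\stackrel{(law)}{=}(\sinh \tilde B_{t})$ for a Brownian motion $\tilde B$; combined with the time-reversal identity \eqref{Q} this returns Bougerol's identity \eqref{boug}.
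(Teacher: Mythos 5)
Your proof is correct and follows exactly the route the paper intends (the paper's proof is the one-line "apply It\^o's formula to both processes"): you derive the common SDE $dX_{t}=\sqrt{X_{t}^{2}+1}\,d\delta_{t}+\big((\mu+\tfrac12)X_{t}+\nu\big)\,dt$ with initial condition $x$ for both $X^{(\mu,\nu)}$ and $\sinh(Y^{(\mu,\nu)})$ and conclude by pathwise uniqueness under Lipschitz coefficients. The details you supply (vanishing cross-bracket by independence, the L\'evy-characterization reduction, the matching initial condition) are all accurate.
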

\begin{proof}
It suffices to apply It\^{o}'s formula to both processes
$X^{(\mu,\nu)}$ and $\sinh (Y^{(\mu,\nu)})$.
\end{proof}
It follows now:
\begin{cor}\label{corADY}
For every $t$ fixed,
\beq
\sinh (Y^{(\mu,\nu)}_{t})\stackrel{(law)}{=} \int^{t}_{0} \exp(B_{s}+\mu s)d(\beta_{s}+\nu s).
\eeq
In particular, in the case $\mu=1$ and $\nu=0$:
\beq
\sinh (B_{t}+\varepsilon t)\stackrel{(law)}{=} \int^{t}_{0} \exp(B_{s}+s)d\beta_{s},
\eeq
with $\varepsilon$ denoting a symmetric Bernoulli variable taking values in $\{-1,1\}$.
\end{cor}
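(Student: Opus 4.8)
The plan is to derive the corollary from Proposition~\ref{propADY} by taking $x=0$ and then grafting on a single time-reversal identity. With $x=0$ the process in (\ref{Xmunu}) reduces to $\exp(B_t+\mu t)\int_0^t\exp(-(B_s+\mu s))\,d(\beta_s+\nu s)$, and since $\arg\sinh(0)=0$, Proposition~\ref{propADY} says that for every fixed $t$ this variable has the law of $\sinh(Y^{(\mu,\nu)}_t)$, where $Y^{(\mu,\nu)}$ is the stated diffusion started at $0$. Hence it remains to prove the ``drifted'' analogue of (\ref{Q}), for $t$ fixed,
\[
\exp(B_t+\mu t)\int_0^t\exp(-(B_s+\mu s))\,d(\beta_s+\nu s)\ \stackrel{(law)}{=}\ \int_0^t\exp(B_s+\mu s)\,d(\beta_s+\nu s).
\]
For $\mu=\nu=0$ the diffusion $Y^{(0,0)}$ is Brownian motion from $0$ and one recovers (\ref{boug}), so the genuinely new content lies in carrying the drifts through the time reversal.

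To prove this identity I would condition on the path $(B_s,\ 0\le s\le t)$. Conditionally on $B$, both sides are affine functionals of the independent Gaussian process $(\beta_s)$, hence conditionally Gaussian, so their conditional — and therefore their unconditional — laws are determined by the joint law of the pair (conditional mean, conditional variance): on the left this pair is $\big(\nu\int_0^t e^{B_s+\mu s}ds,\ \int_0^t e^{2(B_s+\mu s)}ds\big)$, on the right it is $\big(\nu e^{B_t+\mu t}\int_0^t e^{-(B_s+\mu s)}ds,\ e^{2(B_t+\mu t)}\int_0^t e^{-2(B_s+\mu s)}ds\big)$. Apply the Brownian time reversal $\widehat B_s:=B_{t-s}-B_t$, which is again a Brownian motion on $[0,t]$ and satisfies $\widehat B_t=-B_t$; the substitution $u=t-s$ turns the left pair into $\big(\nu e^{\mu t-\widehat B_t}\int_0^t e^{\widehat B_u-\mu u}du,\ e^{2\mu t-2\widehat B_t}\int_0^t e^{2(\widehat B_u-\mu u)}du\big)$. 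Replacing $\widehat B$ by an equidistributed copy of $B$ and then applying the reflection $B\mapsto -B$ — both transformations acting on the whole path simultaneously — carries this last pair exactly onto the right pair. Thus the two pairs have the same law, the time-reversal identity follows, and combined with the previous step this proves the first assertion of the corollary.

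For the refinement ($\mu=1$, $\nu=0$) one needs in addition the identification $Y^{(1,0)}_t\stackrel{(law)}{=}B_t+\varepsilon t$. I would deduce this from Girsanov's theorem: writing $M_t=e^{-t/2}\cosh(B_t)=\tfrac12(e^{B_t-t/2}+e^{-B_t-t/2})$, a positive $(\mathcal F_t)$-martingale with $M_0=1$, one has for every bounded measurable $F$
\[
E\big[F((B_s+\varepsilon s)_{s\le t})\big]=\tfrac12 E\big[F(B)\,e^{B_t-t/2}\big]+\tfrac12 E\big[F(B)\,e^{-B_t-t/2}\big]=E\big[F(B)\,M_t\big],
\]
so the law of $(B_s+\varepsilon s)_{s\le t}$ is that of $B$ under the Doob $h$-transform by $M$; and the canonical process under this change of measure is, again by Girsanov, a diffusion started at $0$ with generator $\tfrac12\,d^2/dy^2+\tanh(y)\,d/dy$, i.e.\ $Y^{(1,0)}$. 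Applying $\sinh$ and invoking the general case already established gives $\sinh(B_t+\varepsilon t)\stackrel{(law)}{=}\int_0^t\exp(B_s+s)\,d\beta_s$.

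The step I expect to be the main obstacle is the drifted time-reversal identity. Unlike in (\ref{Q}), the drift $\mu$ breaks the reflection symmetry of Brownian motion, so one cannot finish with a single use of time reversal but must combine it with $B\mapsto -B$ while carefully tracking the random prefactor $e^{B_t+\mu t}$ and the \emph{joint} distribution of conditional mean and conditional variance. By contrast, the $h$-transform identification of $Y^{(1,0)}$ is routine.
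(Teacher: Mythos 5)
Your overall route is the intended one: the paper gives no argument beyond ``It follows now,'' and the corollary is indeed Proposition~\ref{propADY} with $x=0$ combined with a time reversal for the first display, plus the classical identification of the $\tanh$-drift diffusion as $B_t+\varepsilon t$ for the second (your $h$-transform argument with $M_t=e^{-t/2}\cosh(B_t)$ is correct and complete). The one real problem is that your displayed time-reversed pair is wrong as written, and in a way that matters. With $\widehat B_s=B_{t-s}-B_t$ one has $B_t-B_s=-\widehat B_{t-s}$, so the conditional variance transforms as
\[
e^{2(B_t+\mu t)}\int_0^t e^{-2(B_s+\mu s)}\,ds=\int_0^t e^{-2(\widehat B_u-\mu u)}\,du,
\]
with no prefactor and a \emph{minus} sign in the exponent (and similarly for the conditional mean). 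The expression you wrote, $e^{2\mu t-2\widehat B_t}\int_0^t e^{2(\widehat B_u-\mu u)}\,du$, is pathwise identically equal to $\int_0^t e^{2(B_s+\mu s)}\,ds$ (use $\widehat B_u-\widehat B_t=B_{t-u}$), so it cannot be a rewriting of the left-hand variance; worse, feeding your written pair into the subsequent substitutions $\widehat B\mapsto B$, $B\mapsto -B$ returns exactly the \emph{left} pair, i.e.\ the argument as literally stated is circular. With the corrected expression the two substitutions do carry the pair onto $\bigl(\nu\int_0^t e^{B_u+\mu u}\,du,\ \int_0^t e^{2(B_u+\mu u)}\,du\bigr)$ and the proof goes through. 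Note also that choosing the other reversal convention $\widehat B_s=B_t-B_{t-s}$, which is the one implicit in (\ref{Q}), yields $\int_0^t e^{2(\widehat B_u+\mu u)}\,du$ directly and dispenses with the reflection step altogether.
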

\begin{rem}
With $\mu=-1/2$ and $\nu=0$, we have that $\sinh\left(Y^{(-1/2,0)}_{t}\right)$ is a martingale.
Indeed, with $Y_{t}\equiv Y^{(-1/2,0)}_{t}$, It\^{o}'s formula yields:
\beqq
\sinh(Y_{t})&=&\int^{t}_{0} \cosh(Y_{s}) \ dY_{s}+\frac{1}{2}\int^{t}_{0}\sinh(Y_{s}) \ ds \\
&=& \int^{t}_{0} \cosh(Y_{s}) \left[dB_{s}-\frac{1}{2} \tanh(Y_{s}) \ ds \right] + \frac{1}{2}\int^{t}_{0}\sinh(Y_{s}) \ ds \\
&=& \int^{t}_{0} \cosh(Y_{s}) dB_{s} \ .
\eeqq
Hence:
\beq
M_{t}\equiv\sinh(Y_{t})=\beta_{\int^{t}_{0} ds \left(\cosh^{2}(Y_{s})\right)}\equiv
\beta_{\int^{t}_{0} ds \left(1+\sinh^{2}(Y_{s})\right)} \ ,
\eeq
and for this Markovian martingale, we have:
\beq
M_{t}=\sinh(Y_{t})=\int^{t}_{0} \cosh(Y_{s}) dB_{s}=\int^{t}_{0} \sqrt{1+M_{s}^{2}} dB_{s} \ .
\eeq
It can also be seen directly from (\ref{Xmunu}) that $\left(X^{(-1/2,0)}_{t},t\geq 0\right)$ is the product
of two orthogonal martingales.
This property is true because:
\beq
X^{(-1/2,0)}_{t}=\frac{B_{u}}{R_{u}} \Big|_{u=A^{(1/2)}_{t}} \ ,
\eeq
with $A_{t}^{(\nu)}=\int^{t}_{0}ds \ \exp (2B_{s}^{(\nu)})$,  $(B_{t}^{(\nu)},t\geq 0)$
denoting a Brownian motion with drift, and $(R_{t},t\geq 0)$
a 2-dimensional Bessel process started at 0.
Further details about this ratio are discussed in Sections \ref{secbougdif} and \ref{secopen}.
We also remark that, with the notation of Section \ref{secintro}, $A_{t}^{(0)}\equiv A_{t}$.
\end{rem}

\subsection{Hyperbolic Brownian motion}
Alili and Gruet in \cite{AlG97} generalized Bougerol's identity in terms of hyperbolic Brownian motion:
\begin{prop}\label{AG}
We use the notation introduced in the previous Subsection, that is: $(R_{t},t\geq 0)$ is a 2-dimensional Bessel process with $R_{0}=0$
and wee also denote by $\Xi$ an arcsine variable such that $B^{(\nu)}$,
$R$ and $\Xi$ are independent. Let $\phi$ be the function defined by:
\beq
\phi(x,z)=\sqrt{2e^{x}\cosh(z)-e^{2x}-1}, \ \ for \ \ z\geq|x|.
\eeq
Then, for fixed $t$, we have:
\beq
\beta_{A_{t}^{(\nu)}}\stackrel{(law)}{=} (2\Xi-1) \phi\left(B_{t}^{(\nu)},\sqrt{R_{t}^{2}+(B_{t}^{(\nu)})^{2}}\right).
\eeq
In particular, with $\nu=0$, we recover Bougerol's identity:
\beq
\beta_{A_{t}}\stackrel{(law)}{=} (2\Xi-1) \phi\left(B_{t},\sqrt{R_{t}^{2}+B_{t}^{2}}\right)
\stackrel{(law)}{=} \sinh(B_{t}).
\eeq
\end{prop}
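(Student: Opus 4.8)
The plan is to realize the left-hand side as a coordinate of a Brownian motion on the hyperbolic half-plane and then to read off its law from Gruet's explicit formula for the hyperbolic heat kernel. Using the time-reversal identity (\ref{Q}) in its drifted version, write $\beta_{A^{(\nu)}_{t}}\stackrel{(law)}{=}\int_{0}^{t}e^{B^{(\nu)}_{s}}\,d\gamma_{s}$ for a Brownian motion $\gamma$ independent of $B^{(\nu)}$, and introduce the $\mathbb{C}$-valued process
\[
H_{t}:=\int_{0}^{t}e^{B^{(\nu)}_{s}}\,d\gamma_{s}+i\,e^{B^{(\nu)}_{t}},\qquad t\ge 0,
\]
which lives in the Poincar\'e half-plane $\mathbb{H}=\{x+iy:y>0\}$ and starts at $i$. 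Exactly as in the proof of (\ref{boug}), It\^{o}'s formula shows that $(H_{t})$ is a diffusion with generator $\tfrac12\,y^{2}(\partial_{x}^{2}+\partial_{y}^{2})+(\nu+\tfrac12)\,y\,\partial_{y}$: Brownian motion on $\mathbb{H}$ for its hyperbolic metric, run with a drift along the vertical geodesics issued from $\infty$. For $\nu=-\tfrac12$ this is the genuine hyperbolic Brownian motion; for general $\nu$ it differs from the $\nu=-\tfrac12$ case by a Girsanov change of measure whose density is a power of $\operatorname{Im}(H_{t})=e^{B^{(\nu)}_{t}}$ times a deterministic factor (the tilt by the Busemann function of $\infty$). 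In every case the quantity of interest is $\operatorname{Re}(H_{t})\stackrel{(law)}{=}\beta_{A^{(\nu)}_{t}}$.

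Next pass to geodesic polar coordinates about $i$. With $z_{t}:=d_{\mathbb{H}}(i,H_{t})$ one has the elementary identity $\cosh z_{t}=\bigl(\operatorname{Re}(H_{t})^{2}+\operatorname{Im}(H_{t})^{2}+1\bigr)/\bigl(2\operatorname{Im}(H_{t})\bigr)$; hence a point of $\mathbb{H}$ at height $e^{x}$ and hyperbolic distance $z$ from $i$ satisfies $|\operatorname{Re}|=\phi(x,z)$, which is exactly where the function $\phi$, and the constraint $z\ge|x|$, come from; and $\log\operatorname{Im}(H_{t})=B^{(\nu)}_{t}$ by construction. By the $x\mapsto-x$ symmetry of the law of $(H_{s})_{s\le t}$ — preserved by the vertical drift, and fixing $i$ — one gets $\operatorname{Re}(H_{t})=\varepsilon\,\phi\bigl(B^{(\nu)}_{t},z_{t}\bigr)$ with $\varepsilon$ a symmetric sign independent of $(B^{(\nu)}_{t},z_{t})$. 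So it remains to identify the joint law of $(B^{(\nu)}_{t},z_{t})$ and to check that, after multiplying by $\varepsilon$, it matches the law of $(2\Xi-1)\,\phi\bigl(B^{(\nu)}_{t},\sqrt{R_{t}^{2}+(B^{(\nu)}_{t})^{2}}\bigr)$.

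On the right-hand side, $\sqrt{R_{t}^{2}+(B^{(\nu)}_{t})^{2}}$ is the radial part at time $t$ of a Brownian motion in $\mathbb{R}^{3}$ started at $0$ with drift $\nu e_{3}$ ($R$ and $B$ being two of its three coordinates; for $\nu=0$ this is simply a $\mathrm{BES}(3)$ process), the azimuthal angle of that motion is uniform on $[0,2\pi)$ and independent of the pair (radial part, third coordinate), and the cosine of a uniform angle has the arcsine law, so $2\Xi-1$ is an independent copy of it. The identification then reduces to a comparison of two explicit densities. On the hyperbolic side one uses Gruet's integral formula (of Hartman--Watson type) for the law of $z_{t}$ together with the conditional law of $B^{(\nu)}_{t}=\log\operatorname{Im}(H_{t})$ given $z_{t}$ — both in their drifted versions, obtained from the case $\nu=-\tfrac12$ by the Girsanov tilt described above, the pure conditional law of $\operatorname{Im}(H_{t})$ given $z_{t}=z$ being itself read off from rotational invariance about $i$. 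On the Euclidean side one uses the $\mathrm{BES}(3)$ marginal, the $e^{\nu b}$-tilt produced by the $\mathbb{R}^{3}$-drift, and the definition of $\phi$; a change of variables through $\cosh(\cdot)$ then makes the two densities coincide. Equivalently and more bluntly: both sides have the density given by the drifted Gauss--Laplace transform of $A^{(\nu)}_{t}$, which is precisely Gruet's formula. Finally, for $\nu=0$ the density of $\beta_{A_{t}}$ obtained this way is exactly the density of $\sinh(B_{t})$, since $\operatorname{arcsinh}$ conjugates it to the Gaussian density — as already recorded after (\ref{boug}) — which yields the stated reduction to Bougerol's identity.

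The main obstacle is the density comparison in the third step: the argument rests on Gruet's explicit formula for the law of the radial part of hyperbolic Brownian motion, \emph{including} the precise drift correction for $\nu\neq-\tfrac12$, and on the somewhat delicate bookkeeping that turns $\operatorname{Re}(H_{t})=\varepsilon\,\phi(B^{(\nu)}_{t},z_{t})$ into an independent arcsine factor times $\phi$ evaluated at a pair whose law is that of $\bigl(B^{(\nu)}_{t},\sqrt{R_{t}^{2}+(B^{(\nu)}_{t})^{2}}\bigr)$ — note that this pair of coordinates does \emph{not} have the same joint law on the hyperbolic and the Euclidean side, so the identity is genuinely one about the single variable $\operatorname{Re}(H_{t})$ and not a coordinatewise coincidence. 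By contrast, the realization of $\beta_{A^{(\nu)}_{t}}$ as $\operatorname{Re}(H_{t})$ and the $\nu=0$ reduction are routine.
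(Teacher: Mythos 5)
Your strategy is sound and, in substance, it is the original Alili--Gruet route, of which the paper's proof is the analytic avatar: the paper does not pass through the half-plane picture explicitly but instead proves Lemma \ref{lemAG}, i.e.\ the integral formula
$E\bigl[\exp(-\tfrac{u^{2}}{2}A^{(\nu)}_{t})\bigr]=e^{-\nu^{2}t/2}\int dx\,e^{\nu x}\int_{|x|}^{\infty}dz\,\tfrac{z}{\sqrt{2\pi t^{3}}}e^{-z^{2}/(2t)}J_{0}(u\phi(x,z))$,
obtained from the skew-product representation of planar Brownian motion (the resolvent identity (\ref{invert}) in terms of $I_{\mu}K_{\mu}$), inverted via Lebedev's integral representation of $I_{\mu}K_{\mu}$ through $J_{0}$, with Cameron--Martin handling the drift; the proposition then drops out from $J_{0}(\xi)=E[e^{i\xi(2\Xi-1)}]$ together with the observation that the double integral above is exactly integration against the joint density of $\bigl(B^{(\nu)}_{t},\sqrt{R_{t}^{2}+(B^{(\nu)}_{t})^{2}}\bigr)$. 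Your hyperbolic heat-kernel formula ``of Hartman--Watson type'' and the paper's resolvent inversion are two faces of the same computation, and both proofs ultimately black-box it (the paper defers to \cite{AlG97}). Two remarks on your write-up. First, the detour through $\operatorname{Re}(H_{t})=\varepsilon\,\phi(B^{(\nu)}_{t},z_{t})$ and the geodesic polar coordinates, while correct (your generator and the identity $\cosh d_{\mathbb{H}}(i,x+iy)=(x^{2}+y^{2}+1)/(2y)$ check out, and it does explain where $\phi$ and the constraint $z\ge|x|$ come from), is logically dispensable: since $\beta$ is independent of $B^{(\nu)}$, the characteristic function of the left-hand side is $E[e^{-u^{2}A^{(\nu)}_{t}/2}]$ with no geometry needed, while that of the right-hand side is $E[J_{0}(u\phi(\cdot,\cdot))]$, so the whole statement reduces directly to the displayed integral formula; your version additionally requires matching a symmetric-sign multiplier against an arcsine multiplier under two different joint densities, which is a genuinely harder-looking (though equivalent) identity --- you correctly flag that the pairs do not have the same joint law, but this makes your final comparison more delicate than necessary. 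Second, be aware that the heavy lifting --- the explicit law of $A^{(\nu)}_{t}$, equivalently the drifted hyperbolic heat kernel --- is the entire content of the lemma; invoking ``Gruet's formula'' is acceptable as a citation but is not a proof of it, so your argument is complete only modulo that input, exactly as the paper's is.
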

This is an immediate consequence of the following:
\begin{lem}\label{lemAG}
\begin{enumerate}[(i)]
 \item  The law of the functional $A_{t}^{(\nu)}$ is characterized by: for all $u\geq0$,
\beq
E\left[\exp\left(-\frac{u^{2}}{2} \ A_{t}^{(\nu)}\right)\right]=e^{-\nu^{2}t/2} \int_{\mathbb{R}} dx \ e^{\nu x}
\int^{+\infty}_{|x|} dz \ \frac{z}{\sqrt{2\pi t^{3}}} \ e^{-z^{2}t/2} J_{0}(u\phi(x,z)),
\eeq
where $J_{0}$ stands for the Bessel function of the first kind with parameter 0 \cite{Leb72}.
\item In particular, taking $\nu=0$, for $u\geq0$ and $x\in\mathbb{R}$ we have:
\beq
\exp\left(-\frac{x^{2}}{2t}\right) E\left[\exp\left(-\frac{u^{2}}{2} \ A_{t}\right) \big| B_{t}=x \right]=
\int^{+\infty}_{|x|} dz \ \frac{z}{t} \ e^{-z^{2}t/2} J_{0}(u\phi(x,z)).
\eeq
\end{enumerate}
\end{lem}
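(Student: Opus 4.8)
The plan is to establish part (i) first, since part (ii) is just the conditional reformulation obtained by Girsanov/conditioning. The starting point is Bougerol's identity in the form already proved in the excerpt together with its hyperbolic-geometric refinement. The key observation is that the exponential functional $A_t^{(\nu)}=\int_0^t e^{2B_s^{(\nu)}}\,ds$ arises naturally as the clock in the skew-product decomposition of hyperbolic Brownian motion in the half-plane model: writing hyperbolic BM as $(e^{B_t^{(\nu)}}, \text{angular part run at clock }A_t^{(\nu)})$, the angular part is a planar Brownian motion (or a one-dimensional BM $\gamma$) and $\beta_{A_t^{(\nu)}}\stackrel{(law)}{=}\int_0^t e^{B_s^{(\nu)}}\,d\gamma_s$ as in (\ref{Q}). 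So I would compute $E\bigl[\exp(-\tfrac{u^2}{2}A_t^{(\nu)})\bigr]$ by conditioning on the whole path of $B^{(\nu)}$: given that path, $\beta_{A_t^{(\nu)}}$ is Gaussian with variance $A_t^{(\nu)}$, hence $E\bigl[e^{iu\beta_{A_t^{(\nu)}}}\mid B^{(\nu)}\bigr]=\exp(-\tfrac{u^2}{2}A_t^{(\nu)})$. Taking expectations, $E\bigl[\exp(-\tfrac{u^2}{2}A_t^{(\nu)})\bigr]=E\bigl[e^{iu\beta_{A_t^{(\nu)}}}\bigr]$.

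Next I would express the right-hand side using the hyperbolic identity of Proposition \ref{AG}: $\beta_{A_t^{(\nu)}}\stackrel{(law)}{=}(2\Xi-1)\phi\bigl(B_t^{(\nu)},\sqrt{R_t^2+(B_t^{(\nu)})^2}\bigr)$, with $\Xi$ arcsine and $R$ a 2-dimensional Bessel process from $0$, all independent. Then I would integrate out the three independent pieces in turn. Integrating against the arcsine law of $\Xi$: for any real $a$, $E\bigl[e^{iu(2\Xi-1)a}\bigr]=J_0(ua)$, which is exactly where the Bessel function $J_0$ enters — this is the classical integral representation $J_0(y)=\frac{1}{\pi}\int_0^\pi e^{iy\cos\theta}\,d\theta$ combined with the arcsine$\,=\,\cos^2$ representation. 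That leaves $E\bigl[J_0\bigl(u\,\phi(B_t^{(\nu)},\sqrt{R_t^2+(B_t^{(\nu)})^2})\bigr)\bigr]$, where now the expectation is over $B_t^{(\nu)}$ and $R_t$ only. Since $B_t^{(\nu)}=B_t+\nu t$ has density $\frac{1}{\sqrt{2\pi t}}e^{-(x-\nu t)^2/(2t)}=e^{-\nu^2 t/2}e^{\nu x}\cdot\frac{1}{\sqrt{2\pi t}}e^{-x^2/(2t)}$, the factor $e^{-\nu^2 t/2}e^{\nu x}$ splits off cleanly. For the Bessel part: conditionally on $B_t^{(\nu)}=x$, the quantity $Z:=\sqrt{R_t^2+x^2}$ has, since $R_t^2/t$ is the square of a 2-dimensional Bessel bridge-free Bessel variable (i.e. $R_t^2\stackrel{(law)}{=}t\cdot(\text{exponential-type})$), density proportional to $\frac{z}{t}e^{-(z^2-x^2)/(2t)}\mathbf{1}_{z\ge|x|}\,dz$; combined with the Gaussian factor in $x$ this produces $\frac{z}{\sqrt{2\pi t^3}}e^{-z^2 t/2}$ — wait, one must be careful: the density of $R_t$ (2-dim Bessel from 0) is $\frac{r}{t}e^{-r^2/(2t)}$, so changing variables $r\mapsto z=\sqrt{r^2+x^2}$ gives $z\,dz=r\,dr$ and density $\frac{z}{t}e^{-(z^2-x^2)/(2t)}$ on $z\ge|x|$; multiplying by the $x$-Gaussian density the $e^{x^2/(2t)}$ cancels, leaving $\frac{z}{t\sqrt{2\pi t}}e^{-z^2/(2t)}$. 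Reconciling the normalization with the stated $\frac{z}{\sqrt{2\pi t^3}}e^{-z^2 t/2}$ requires tracking whether the relevant scaling uses $t$ or $1/t$ in the exponent (a scaling/duality step); I would pin this down by checking the total-mass normalization at $u=0$, where the left side is $1$ and the right side must integrate to $1$.

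For part (ii), I would simply not integrate over $B_t$: conditioning on $B_t=x$ in the $\nu=0$ case, the outer Gaussian density $\frac{1}{\sqrt{2\pi t}}e^{-x^2/(2t)}$ is replaced by a point mass, so the displayed identity is what remains after stripping that density — equivalently, multiply through by $e^{x^2/(2t)}$ as written. The factor-of-$\pi$ and the $\sqrt{2\pi t}$ versus $t$ discrepancies between (i) and (ii) are absorbed exactly into this conditioning, and I would verify consistency by integrating the part-(ii) identity against $\frac{1}{\sqrt{2\pi t}}e^{-x^2/(2t)}\,dx$ and recovering part (i) with $\nu=0$.

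The main obstacle I anticipate is not any single hard estimate but getting every constant and every power of $t$ right: there are three nested integrations (arcsine, Gaussian in $x$, Bessel in $r$ or $z$), and the Brownian scaling relation $A_t^{(\nu)}\stackrel{(law)}{=}t\,A_1^{(\nu\sqrt t)}$-type identities mean it is easy to end up with $e^{-z^2/(2t)}$ where the paper has $e^{-z^2 t/2}$. The cleanest safeguard is to treat the $u=0$ case as a normalization check at each stage, and to cross-check part (i) at $\nu=0$ against the known Gauss--Laplace transform of $A_t$ displayed earlier in the introduction, since the Laplace transform in $u^2/2$ and that Gauss--Laplace transform are related by a further integration. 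The genuinely structural input — the appearance of $J_0$ and of the constraint $z\ge|x|$ — comes for free from Proposition \ref{AG} and the definition of $\phi$, so once the bookkeeping is under control the proof is short.
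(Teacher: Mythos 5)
There is a genuine gap, and it is a circularity: you propose to prove Lemma \ref{lemAG} by starting from the identity $\beta_{A_t^{(\nu)}}\stackrel{(law)}{=}(2\Xi-1)\,\phi\bigl(B_t^{(\nu)},\sqrt{R_t^2+(B_t^{(\nu)})^2}\bigr)$ of Proposition \ref{AG}, but in the paper the logical order is the reverse --- Proposition \ref{AG} is deduced \emph{from} Lemma \ref{lemAG} precisely via the arcsine representation $J_0(\xi)=E[e^{i\xi(2\Xi-1)}]$ that you also use. Your computations (conditioning on $B^{(\nu)}$ to get $E[\exp(-\tfrac{u^2}{2}A_t^{(\nu)})]=E[e^{iu\beta_{A_t^{(\nu)}}}]$, integrating out $\Xi$ to produce $J_0$, splitting off the Cameron--Martin factor $e^{-\nu^2t/2}e^{\nu x}$, and the change of variables $z=\sqrt{r^2+x^2}$ with the BES(2) density $\tfrac{r}{t}e^{-r^2/(2t)}$) are all correct, and they do establish that the Lemma and the Proposition are \emph{equivalent} statements; your suspicion about the normalization is also well founded --- the $u=0$ check shows the displayed $e^{-z^2t/2}$ must be read as $e^{-z^2/(2t)}$. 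But none of this proves either statement: the entire nontrivial content is the joint law of $(B_t^{(\nu)},A_t^{(\nu)})$, i.e.\ why the conditional Laplace transform of $A_t^{(\nu)}$ given $B_t^{(\nu)}=x$ has exactly the form $\int_{|x|}^{\infty}\tfrac{z}{t}e^{-(z^2-x^2)/(2t)}J_0(u\phi(x,z))\,dz$, and this is assumed rather than derived in your argument.

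What the paper actually does to supply that content is: (a) take the Laplace transform in $t$ and use the skew-product representation of planar Brownian motion to obtain the resolvent formula $\int_0^{\infty}dt\,e^{-\rho^2t/2}\,E[\exp(-\tfrac{u^2}{2}A_t^{(\nu)})]=\int_{-\infty}^{+\infty}dy\,e^{\nu y}G_{\mu}(u,ue^{y})$ with $\mu=\sqrt{\rho^2+\nu^2}$ and $G_\mu$ built from $2I_\mu K_\mu$ (this is the Hartman--Watson-type input); (b) insert Lebedev's integral representation $I_\mu(x)K_\mu(y)=\tfrac12\int_{\log(y/x)}^{\infty}e^{-\mu r}J_0(\sqrt{2xy\cosh r-x^2-y^2})\,dr$, which is where $J_0$ and the constraint $z\ge|x|$ genuinely come from; and (c) recognize $e^{-r\sqrt{\rho^2+\nu^2}}$ as the Laplace transform in $\rho^2/2$ of $e^{-\nu^2t/2}\tfrac{r}{\sqrt{2\pi t^3}}e^{-r^2/(2t)}$, so the $t$-Laplace transform can be inverted term by term. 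Part (ii) then follows because this derivation produces the identity at the level of the joint density of $(B_t,A_t)$, which is what licenses the disintegration in $x$; note that one cannot in general pass from your integrated version of (i) to the conditional statement (ii) without that extra structure. To repair your proof you would either have to carry out steps (a)--(c), or give an independent proof of Proposition \ref{AG} (e.g.\ a genuinely geometric one via hyperbolic Brownian motion), which your sketch gestures at but does not execute.
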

Proposition \ref{AG} follows now immediately from Lemma \ref{lemAG} by using the classical representation of the
Bessel function of the first kind with parameter 0 (see e.g. \cite{Leb72}):
\beq
J_{0}(z)=\frac{1}{\pi} \int^{+1}_{-1} \frac{dr}{\sqrt{1-r^{2}}} \ \cos(zr),
\eeq
and remarking that (with $\Xi$ denoting again an arcsine variable), for all real $\xi$:
\beq
J_{0}(\xi)=E\left[\exp\left(i\xi (2\Xi-1)\right)\right].
\eeq
\begin{proof} \emph{(Lemma \ref{lemAG})} \\
With $I_{\mu}$ and $K_{\mu}$ denoting the modified Bessel functions of the first and the second kind respectively
with parameter $\mu=\sqrt{\rho^{2}+\nu^{2}}$ (for $\rho$ and $\nu$ reals),
we define the function $G_{\mu}:\mathbb{R}^{2}\rightarrow\mathbb{R}^{+}$ by:
\beq
G_{\mu}(u,v)=\left\{
  \begin{array}{ll}
    2I_{\mu}(u)K_{\mu}(v), & u\leq v ; \\
    2I_{\mu}(v)K_{\mu}(u), & u\geq v .
  \end{array}
\right.
\eeq
First, using the skew product representation of planar Brownian motion, the following formula holds
(for further details we address the interested reader to \cite{AlG97}):
\beq\label{invert}
\int^{\infty}_{0}dt \ \exp\left(-\frac{\rho^{2}}{2} \ t\right) E\left[\exp\left(-\frac{u^{2}}{2} \ A_{t}^{(\nu)}\right)\right]
=\int^{+\infty}_{-\infty} dy \ e^{\nu y}G_{\mu}(u,ue^{y}).
\eeq
Using the integral representation (see e.g. \cite{Leb72}, problem 8, p. 140):
\beq
I_{\mu}(x)K_{\mu}(y)=\frac{1}{2} \int^{\infty}_{\log(y/x)} dr \ e^{-\mu r} J_{0}\left(\sqrt{2\cosh(r) xy-x^{2}-y^{2}}\right), \ \ y\geq x.
\eeq
we can invert (\ref{invert}) in order to obtain part $\left.i\right)$ of Lemma \ref{lemAG}. \\
Part $\left.ii\right)$ follows with the help of Cameron-Martin relation.
\end{proof}

\section{Bougerol's identity and subordination}\label{secsub}
In this Section, we consider $(Z_{t}=X_{t}+iY_{t},t\geq0)$ a standard planar Brownian motion (BM)
starting from $x_{0}+i0,x_{0}>0$ (for simplicity and without loss of generality, we suppose that $x_{0}=1$).
Then, a.s., $(Z_{t},t\geq0)$ does not visit $0$ but it winds around it infinitely often, hence
$\theta_{t}=\mathrm{Im}(\int^{t}_{0}\frac{dZ_{s}}{Z_{s}}),t\geq0$
is well defined \cite{ItMK65}. There is the well-known
skew-product representation:
\beq\label{skewproduct}
\log\left|Z_{t}\right|+i\theta_{t}\equiv\int^{t}_{0}\frac{dZ_{s}}{Z_{s}}=\left(
B_{u}+i\gamma_{u}\right)
\Bigm|_{u=H_{t}=\int^{t}_{0}\frac{ds}{\left|Z_{s}\right|^{2}}} \ ,
\eeq
where $(B_{u}+i\gamma_{u},u\geq0)$ is another planar Brownian motion starting from $\log 1+i0$. Thus:
\beqq
H^{-1}_{u}\equiv \inf\{ t:H_{t}>u \} =
\int^{u}_{0}ds \exp(2B_{s}) := A_{u}(B) .
\eeqq
For further study of the Bessel clock $H$, see e.g. \cite{Yor80}.
We also define the first hitting times
$T^{\theta}_{c}\equiv \inf\{ t:\theta_{t}=c \}$ and $T^{|\theta|}_{c}\equiv\inf\{ t:|\theta_{t}|=c \}$.

\subsection{General results}\label{subsecgeneral}
Bougerol's identity in law combined with the symmetry principle of Andr\'{e} \cite{And87,Gal08} yields the following identity in law
(see e.g. \cite{BeY12,BDY12a}): for every fixed $l>0$,
\beq\label{subord}
H_{\tau_{l}}\stackrel{(law)}{=}\tau_{a(l)}
\eeq
where $(\tau_{l},l\geq 0)$ stands for a stable (1/2)-subordinator.
An example of this kind of identities in law is given for the planar Brownian motion case in the next Subsection.
The main point in \cite{BeY12} is that (\ref{subord}) is not extended in the level of processes indexed by $l\geq 0$.

\subsection{Bougerol's identity in terms of planar Brownian motion}
Vakeroudis \cite{Vak11} investigated Bougerol's identity in terms of planar Brownian motion
and obtained some striking identities in law:
\begin{prop}\label{remid} Let $(\beta_{u},u\geq0)$ be a 1-dimensional Brownian
motion independent of the planar Brownian motion $(Z_{u},u\geq0)$
starting from 1. Then, for any $b\geq0$ fixed, the following identities in law hold:
\beqq
    \left.i\right) \ H_{T^{\beta}_{b}} \stackrel{(law)}{=} T^{B}_{a(b)} \ \ \ \
    \left.ii\right) \ \theta_{T^{\beta}_{b}} \stackrel{(law)}{=} C_{a(b)} \ \ \ \
    \left.iii\right) \ \bar{\theta}_{T^{\beta}_{b}} \stackrel{(law)}{=}
    |C_{a(b)}|,
\eeqq
where $C_{A}$ is a Cauchy variable with parameter $A$ and
$\bar{\theta}_{u}=\sup_{0\leq s\leq u}\theta_{s}$.
\end{prop}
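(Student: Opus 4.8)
The plan is to deduce all three identities from Bougerol's identity in its ``supremum'' form \eqref{bougerol2sup} together with the skew-product representation \eqref{skewproduct} and a time-change argument. First I would recall that by \eqref{skewproduct} we have $\theta_t = \gamma_{H_t}$, where $H_t = \int_0^t ds/|Z_s|^2$ and $(\gamma_u, u\ge 0)$ is a linear Brownian motion, which is moreover the \emph{imaginary part} of the driving planar Brownian motion $(B_u+i\gamma_u, u\ge 0)$; in particular $\gamma$ is independent of $B$, and $H^{-1}_u = A_u(B)$. The strategy for part $i)$ is to express the left-hand side $H_{T^\beta_b}$ as an honest first-passage time for $B$. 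Writing $T^\beta_b = \inf\{t : \beta_t = b\}$ and using that $\beta$ is a Brownian motion independent of $Z$, the key observation is that $H_{T^\beta_b}$ should be read through the identity $\inf\{u : A_u(B) \ge T^\beta_b\}$, since $H$ and $A(B)$ are inverse to each other. The time-reversal / SDE argument behind Bougerol's identity (reproduced in the proof of \eqref{boug} in the Introduction) is exactly what converts the random clock $A_\cdot(B)$ evaluated against an independent Brownian passage time into a passage time of $B$ at level $a(b) = \operatorname{arg\,sinh}(b)$; concretely, from \eqref{subord} applied with the Brownian first-passage time $T^\beta_b$ (which has the law of a stable-$1/2$ subordinator at time $b$, i.e.\ $T^\beta_b \stackrel{(law)}{=} \tau_b$) one gets $H_{T^\beta_b} \stackrel{(law)}{=} H_{\tau_b} \stackrel{(law)}{=} \tau_{a(b)} \stackrel{(law)}{=} T^B_{a(b)}$, using that a Brownian first-passage time at level $a(b)$ is again a stable-$1/2$ subordinator evaluated at $a(b)$. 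This gives $i)$.

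For parts $ii)$ and $iii)$ I would then \emph{subordinate}: since $\theta_t = \gamma_{H_t}$ with $\gamma$ independent of $H$ (because $\gamma$ is independent of $B$, hence of $Z$ and of $H$, and also of $\beta$), we have
\[
\theta_{T^\beta_b} = \gamma_{H_{T^\beta_b}} \stackrel{(law)}{=} \gamma_{T^B_{a(b)}},
\]
where in the last step I use part $i)$ together with the independence of $\gamma$ from both clocks. Now $\gamma_{T^B_{a(b)}}$ is a Brownian motion evaluated at an independent Brownian first-passage time at level $a(b)$; this is the classical construction of a Cauchy variable, namely $\gamma_{T^B_c}$ is a Cauchy variable with parameter $c$. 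Hence $\theta_{T^\beta_b} \stackrel{(law)}{=} C_{a(b)}$, which is $ii)$. For $iii)$, one applies the same subordination to the running supremum: $\bar\theta_t = \sup_{s\le t}\gamma_{H_s} = \bar\gamma_{H_t}$ because $H$ is continuous and increasing, so $\bar\theta_{T^\beta_b} = \bar\gamma_{H_{T^\beta_b}} \stackrel{(law)}{=} \bar\gamma_{T^B_{a(b)}}$, and by the reflection principle $\bar\gamma_{T^B_c} \stackrel{(law)}{=} |\gamma_{T^B_c}| \stackrel{(law)}{=} |C_c|$, giving $\bar\theta_{T^\beta_b} \stackrel{(law)}{=} |C_{a(b)}|$.

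The main obstacle, and the step that needs to be argued carefully rather than waved through, is the rigorous justification of the first identity $i)$ — specifically that $H_{T^\beta_b}$ really does have the law of $T^B_{a(b)}$. The delicate points are: (a) that one may legitimately substitute the \emph{random} time $T^\beta_b$ into the clock $H$ and interpret the result via the inverse clock $A(B)$, which requires knowing that $T^\beta_b$ is independent of $(Z,B,\gamma)$ and that $A_u(B)\to\infty$ a.s.\ so that $H_{T^\beta_b}<\infty$ a.s.; and (b) the passage from ``$A_u(B)$ evaluated against an independent stable-$1/2$ time'' to ``$T^B_{a(b)}$'', which is precisely the content of \eqref{subord} and ultimately rests on Bougerol's identity \eqref{boug} combined with the symmetry principle \eqref{bougerol2sup}. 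Once $i)$ is established, parts $ii)$ and $iii)$ are routine applications of independence and the reflection principle, as sketched above. For the full details I would refer to \cite{Vak11}.
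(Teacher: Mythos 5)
Your proof is correct and follows essentially the same route as the paper: part $\left.i\right)$ rests on reading $H_{T^{\beta}_{b}}=\inf\{u: A_{u}(B)\geq T^{\beta}_{b}\}$ as the first passage time of the right-hand side of (\ref{bougerol2sup}) (equivalently, on the subordination identity (\ref{subord})), and parts $\left.ii\right)$ and $\left.iii\right)$ then follow by writing $\theta=\gamma_{H}$ with $\gamma$ independent of $H$ and applying the Cauchy representation $C_{c}\stackrel{(law)}{=}\gamma_{T^{B}_{c}}$ together with the reflection principle. The only cosmetic difference is that you invoke (\ref{subord}) for part $\left.i\right)$ whereas the paper identifies the laws of the first hitting times of level $b$ on the two sides of (\ref{bougerol2sup}) directly, but these are the same argument.
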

\begin{proof}
$\left.i\right)$ We identify the laws of the first hitting times of a fixed
level $b$ by the processes on each side of (\ref{bougerol2sup}) and we obtain:
\beqq
T^{B}_{a(b)} \stackrel{(law)}{=} H_{T^{\beta}_{b}},
\eeqq
which is $\left.i\right)$. \\
$\left.ii\right)$ It follows from $\left.i\right)$ since:
\beqq
\theta_{u} \stackrel{(law)}{=} \gamma_{H_{u}},
\eeqq
with $(\gamma_{s},s\geq0)$ a Brownian motion independent of
$(H_{u},u\geq0)$ and $(C_{u},u\geq0)$ may be represented as
$(\gamma_{T^{B}_{u}},u\geq0)$. \\
$\left.iii\right)$ follows from $\left.ii\right)$ again with the help of the symmetry
principle.
\end{proof}
Using now these identities in law, we can apply William's "pinching" method \cite{Wil74,MeY82}
and recover Spitzer's celebrated asymptotic law which states that \cite{Spi58}:
\beq\label{Spi}
 \frac{2}{\log t} \; \theta_{t} \overset{{(law)}}{\underset{t\rightarrow\infty}\longrightarrow} C_{1} \ ,
\eeq
with $C_{1}$ denoting a standard Cauchy variable
(for other proofs, see also e.g. \cite{Wil74,Dur82,MeY82,BeW94,Yor97,VaY11a}).
One can also find a characterization of the distribution of $T^{\theta}_{c}$ and of $T^{|\theta|}_{c}$ in \cite{Vak11}.
First, applying Bougerol's identity (\ref{boug}) in terms of planar Brownian motion
we have:
\begin{prop}
For fixed $c>0$,
\beq\label{Bougerolplanar}
\sinh(C_{c}) \stackrel{(law)}{=}\beta_{(T^{\theta}_{c})}\stackrel{(law)}{=}\sqrt{T^{\theta}_{c}} \ N \ ,
\eeq
where $N\sim\mathcal{N}(0,1)$ and the involved random variables are independent.
\end{prop}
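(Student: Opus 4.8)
The plan is to apply Bougerol's identity in the form (\ref{boug}), namely $\sinh(B_t)\overset{(law)}{=}\beta_{A_t(B)}$, but now read through the skew-product representation (\ref{skewproduct}) so that the role of the ``base'' Brownian motion $(B_u,u\geq 0)$ is played by the radial part of the planar Brownian motion $(Z_t)$. First I would recall that in (\ref{skewproduct}) one has $\log|Z_t|=B_{H_t}$ and $\theta_t=\gamma_{H_t}$, where $(B_u+i\gamma_u)$ is a planar Brownian motion and $H_t=\int_0^t ds/|Z_s|^2$ is the Bessel clock, with inverse $H^{-1}_u=A_u(B)$. Then I would evaluate both sides of (\ref{boug}) not at a deterministic time but at $t=H_{T^\theta_c}$, the value of the Bessel clock at the first time $\theta$ hits level $c$; since $\theta_t=\gamma_{H_t}$, the clock-value $u=H_{T^\theta_c}$ is exactly the first hitting time $T^{\gamma}_c$ of level $c$ by the Brownian motion $\gamma$, and at that moment $B_u=B_{T^\gamma_c}$ has the law $\sqrt{T^\gamma_c}\,N'$ for an independent standard Gaussian $N'$. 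In fact $B_{T^\gamma_c}$ is a Cauchy variable with parameter $c$: this is the standard fact that if $(B_u,\gamma_u)$ is planar BM then $B_{T^\gamma_c}\overset{(law)}{=}C_c$, which also underlies part $\left.ii\right)$ of Proposition \ref{remid}. Hence $\sinh(B_{H_{T^\theta_c}})\overset{(law)}{=}\sinh(C_c)$.

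Next I would look at the right-hand side of (\ref{boug}) under the same substitution. We get $\beta_{A_{H_{T^\theta_c}}(B)}$; but $A_{H_t}(B)=H^{-1}_{H_t}$, and using the time-change identity $H^{-1}_{H_t}=t$ together with the identification of $H_{T^\theta_c}$ with a hitting time, one sees that $A_{H_{T^\theta_c}}(B)$ coincides in law with $T^\theta_c$ itself. (This is precisely the phenomenon recorded in (\ref{subord}): $H_{\tau_l}\overset{(law)}{=}\tau_{a(l)}$, and here the relevant stopping time on the $\theta$-side is a stable-$(1/2)$ type object arising from $T^\theta_c$.) Therefore $\beta_{A_{H_{T^\theta_c}}(B)}\overset{(law)}{=}\beta_{T^\theta_c}$, and since $\beta$ is an independent Brownian motion evaluated at the independent random time $T^\theta_c$, this equals in law $\sqrt{T^\theta_c}\,N$ with $N\sim\mathcal N(0,1)$ independent. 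Combining the two displays yields $\sinh(C_c)\overset{(law)}{=}\beta_{T^\theta_c}\overset{(law)}{=}\sqrt{T^\theta_c}\,N$, which is (\ref{Bougerolplanar}).

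The step I expect to be the main obstacle is making rigorous the substitution of the \emph{random} time $t=H_{T^\theta_c}$ into Bougerol's identity (\ref{boug}), which as stated holds only for \emph{fixed} $t$. The clean way around this is not to substitute into (\ref{boug}) directly but to argue at the level of the processes: use (\ref{bougerol2sup}) (the supremum version via the symmetry principle), identify the first hitting times of a fixed level on each side — exactly the mechanism used in the proof of Proposition \ref{remid} — and then transfer through the skew-product correspondence $\theta_t=\gamma_{H_t}$, $T^\theta_c$ corresponding to $T^\gamma_c$ on the clock scale. One must check carefully that the independence structure is preserved: $\beta$ (resp. $N$) must remain independent of $T^\theta_c$, which follows because in the skew-product decomposition the winding clock $H$ (hence $T^\theta_c$) is a functional of the modulus process alone, independent of the angular Brownian motion driving $\theta$, and $\beta$ is taken independent of $Z$ from the outset. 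Once the hitting-time identification $H_{T^\theta_c}=T^\gamma_c$ and the independence bookkeeping are in place, the rest is the elementary remark that a Brownian motion run for an independent random time $T$ has the law $\sqrt{T}\,N$.
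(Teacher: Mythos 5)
Your argument is correct and is essentially the paper's own: the paper's (very terse) proof is exactly ``apply Bougerol's identity with $u=T^{\gamma}_{c}$'', which is legitimate because $T^{\gamma}_{c}$ is independent of $(B,\beta)$, so one may condition on its value and then use $B_{T^{\gamma}_{c}}\stackrel{(law)}{=}C_{c}$ and $A_{T^{\gamma}_{c}}(B)=H^{-1}_{T^{\gamma}_{c}}=T^{\theta}_{c}$, with no need for the detour through the supremum version. One small slip in your independence bookkeeping: $T^{\theta}_{c}=A_{T^{\gamma}_{c}}(B)$ is \emph{not} a functional of the modulus alone (it depends on the angular motion $\gamma$ as well), but the conclusion $\beta_{T^{\theta}_{c}}\stackrel{(law)}{=}\sqrt{T^{\theta}_{c}}\,N$ only requires that $\beta$ be independent of $Z$, which you also correctly invoke.
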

Furthermore, we can obtain the following Gauss-Laplace transforms
which are equivalent to Bougerol's identity exploited for planar Brownian motion:
\begin{prop}\label{GLtrans}
For $x \geq 0$ and $m=\frac{\pi}{2c}$,
\beq
 c \; E \left[ \sqrt{\frac{\pi}{2 T^{\theta}_{c}}} \exp \left( -\frac{x}{2T^{\theta}_{c}} \right) \right] &=& \frac{1}{\sqrt{1+x}} \: \frac{c^{2}}{(c^{2}+\log^{2}(\sqrt{x}+\sqrt{1+x}))} \ ; \label{GLtheta} \\
 c \; E \left[ \sqrt{\frac{2}{\pi T^{|\theta|}_{c}}} \exp \left( -\frac{x}{2T^{|\theta|}_{c}} \right) \right] &=&
 \frac{1}{\sqrt{1+x}} \frac{2}{(\sqrt{1+x}+\sqrt{x})^{m}+(\sqrt{1+x}-\sqrt{x})^{m}} \ .  \nonumber \\
 \label{GLthetacone}
\eeq
\end{prop}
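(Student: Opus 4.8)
The plan is to read off both transforms by identifying the densities of the two members of a Bougerol-type identity for the continuous winding number $\theta$, in the same spirit as the one-dimensional Gauss--Laplace transform recalled in Section~\ref{secintro}.

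For (\ref{GLtheta}) I would start from (\ref{Bougerolplanar}), i.e. $\sinh(C_{c})\stackrel{(law)}{=}\sqrt{T^{\theta}_{c}}\,N$ with $N\sim\mathcal N(0,1)$ independent of $T^{\theta}_{c}$. Conditioning on $T^{\theta}_{c}$ shows that the right-hand side has density $y\mapsto E[(2\pi T^{\theta}_{c})^{-1/2}\exp(-y^{2}/(2T^{\theta}_{c}))]$. On the left-hand side, $C_{c}$ has density $\frac{1}{\pi}\frac{c}{c^{2}+u^{2}}$, so the substitution $u=a(v)$, $du=a'(v)\,dv$, shows that $\sinh(C_{c})$ has density $v\mapsto\frac{c}{\pi}\frac{a'(v)}{c^{2}+a^{2}(v)}$. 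Equating the two at $v=y$, putting $x=y^{2}\ge0$ (so $a(y)=\log(\sqrt{x}+\sqrt{1+x})$ and $a'(y)=(1+x)^{-1/2}$) and multiplying through by $\pi c$, while using $\pi(2\pi s)^{-1/2}=\sqrt{\pi/(2s)}$, gives exactly (\ref{GLtheta}).

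For (\ref{GLthetacone}) the first step is to produce the cone analogue of (\ref{Bougerolplanar}). Writing the skew-product (\ref{skewproduct}) as $\theta_{u}=\gamma_{H_{u}}$ with $H^{-1}_{u}=A_{u}(B)$, where $\gamma$ is independent of $B$ (hence of $H$), one obtains $T^{|\theta|}_{c}=A_{\sigma_{c}}(B)$ with $\sigma_{c}:=\inf\{s:|\gamma_{s}|=c\}$ the exit time of $(-c,c)$ by $\gamma$, independent of $B$. Evaluating Bougerol's identity in the form (\ref{bougerol2abs}) at the independent random time $u=\sigma_{c}$ (a routine conditioning argument, with $\beta,B,\gamma$ taken mutually independent) yields
\[
\sinh(|B_{\sigma_{c}}|)\stackrel{(law)}{=}|\beta|_{A_{\sigma_{c}}(B)}=|\beta|_{T^{|\theta|}_{c}}\stackrel{(law)}{=}\sqrt{T^{|\theta|}_{c}}\,|N|.
\]
Since $E[\exp(-\lambda^{2}\sigma_{c}/2)]=1/\cosh(\lambda c)$, the variable $B_{\sigma_{c}}$ (equivalently $\log|Z_{T^{|\theta|}_{c}}|$) has the hyperbolic secant law with density $\frac{1}{2c\cosh(\pi w/(2c))}$ on $\mathbb R$, hence $|B_{\sigma_{c}}|$ has density $\frac{1}{c\cosh(\pi w/(2c))}$ on $[0,\infty)$. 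I would then identify densities of the two sides of the displayed identity just as above: with $m=\pi/(2c)$, $v=\sinh w$, and using $e^{\pm m a(v)}=(\sqrt{1+v^{2}}\pm v)^{m}$ to rewrite $1/\cosh(m\,a(v))=2/[(\sqrt{1+v^{2}}+v)^{m}+(\sqrt{1+v^{2}}-v)^{m}]$, the left-hand side has density $v\mapsto 2a'(v)/\bigl(c[(\sqrt{1+v^{2}}+v)^{m}+(\sqrt{1+v^{2}}-v)^{m}]\bigr)$, while the right-hand side has density $v\mapsto E[\sqrt{2/(\pi T^{|\theta|}_{c})}\exp(-v^{2}/(2T^{|\theta|}_{c}))]$. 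Setting $x=v^{2}$ and multiplying by $c$ gives (\ref{GLthetacone}).

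The changes of variables and the Gaussian-mixture bookkeeping are routine. The step needing most care is the cone case: establishing $T^{|\theta|}_{c}=A_{\sigma_{c}}(B)$ from the skew-product, justifying the use of Bougerol's identity at the independent random time $\sigma_{c}$, and identifying the law of $B_{\sigma_{c}}$ as the scaled hyperbolic secant distribution from its characteristic function $1/\cosh(c\xi)$ --- this is precisely where the exponent $m=\pi/(2c)$ and the denominator $(\sqrt{1+x}+\sqrt{x})^{m}+(\sqrt{1+x}-\sqrt{x})^{m}$ originate. One should also remark that all the laws involved are absolutely continuous (clear on the Gaussian-mixture sides), so that ``identifying densities'' is legitimate.
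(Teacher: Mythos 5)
Your proposal is correct and follows essentially the same route as the paper: both identities are obtained by identifying densities on each side of the Bougerol-type identities $\sinh(C_{c})\stackrel{(law)}{=}\sqrt{T^{\theta}_{c}}\,N$ and $\sinh(B_{T^{|\gamma|}_{c}})\stackrel{(law)}{=}\sqrt{T^{|\theta|}_{c}}\,N$, using the Cauchy density for the first and the hyperbolic secant law of $B_{T^{|\gamma|}_{c}}$ (read off from $1/\cosh(\lambda c)$) for the second. You merely spell out more explicitly the skew-product identification $T^{|\theta|}_{c}=A_{T^{|\gamma|}_{c}}(B)$ and the conditioning at the independent random time, which the paper leaves implicit.
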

\begin{proof}
For the proof of (\ref{GLtheta}), it suffices to identify the densities of the two parts of (\ref{Bougerolplanar})
and to recall that the density of a Cauchy variable with parameter $c$ equals:
$$\frac{c}{\pi(c^{2}+y^{2})} \ .$$
For (\ref{GLthetacone}), we apply Bougerol's identity with $u=T_{c}^{|\gamma|} \equiv \inf
\{ t: |\gamma_{t}|=c \}$ and we obtain:
\beq\label{Bougerolplanarcone}
\sinh(B_{T_{c}^{|\gamma|}}) \stackrel{(law)}{=}\beta_{(T^{|\theta|}_{c})}\stackrel{(law)}{=}\sqrt{T^{|\theta|}_{c}} \ N \ .
\eeq
Once again we identify the densities of the two parts. For the left hand side, we use the following Laplace transform:
for $\lambda\geq0$, $E\left[e^{-\frac{\lambda^{2}}{2}T^{|\gamma|}_{b}}\right]=\frac{1}{\cosh(\lambda b)}$
(see e.g. Proposition 3.7, p. 71 in Revuz and Yor \cite{ReY99}).
We also use the well-known result \cite{Lev80,BiY87}:
\beq\label{Fourier}
E \left[ \exp( i\lambda B_{T^{|\gamma|}_{c}}) \right] &=& \frac{1}{\cosh(\lambda c)} = \frac{1}{\cosh(\pi \lambda \frac{c}{\pi})}
= \int^{\infty}_{-\infty} e^{i \left( \frac{\lambda c}{\pi} \right) x  } \frac{1}{2\pi} \frac{1}{\cosh(\frac{x}{2})} \ dx \ . \nonumber \\
\eeq
Changing now the variables $y=cx/\pi$, we obtain the density of $B_{T^{|\gamma|}_{c}}$ which equals: $$\left( 2c \cosh(\frac{y\pi}{2c})\right)^{-1} = \left(c(e^{\frac{y\pi}{2c}} + e^{-\frac{y\pi}{2c}})\right)^{-1},$$
and finishes the proof.
\end{proof}
Vakeroudis and Yor in \cite{VaY11a,VaY11b} investigated further the law of these random times.

\subsection{The Ornstein-Uhlenbeck case}
Vakeroudis in \cite{Vak11,Vak12} investigated also the case of Ornstein-Uhlenbeck processes. In particular,
we consider now a complex valued Ornstein-Uhlenbeck (OU) process:
\beq\label{OUequation}
    Z_{t} = z_{0} + \tilde{Z_{t}} - \lambda \int^{t}_{0} Z_{s} ds,
\eeq
where $\tilde{Z_{t}}$ is a complex valued Brownian motion, $z_{0}\in \mathbb{C}$
(for simplicity and without loss of generality, we suppose again $z_{0}=1$), $\lambda \geq 0$ and
$T^{(\lambda)}_{c} \equiv T^{|\theta^{Z}|}_{c} \equiv \inf
\left\{t\geq 0 : \left|\theta^{Z}_{t}\right|=c \right\}$
($\theta^{Z}_{t}$ is the continuous winding process associated to
$Z$) denoting the first hitting time of the symmetric conic
boundary of angle $c$ for $Z$. Then, we have the following:
\begin{prop}\label{OUbougerol}
Consider $(Z^{\lambda}_{t},t\geq 0)$ and $(U^{\lambda}_{t},t\geq
0)$ two independent Ornstein-Uhlenbeck processes, the first one complex valued and the second one real valued,
both starting from a point different from 0, and define
$T^{(\lambda)}_{b}(U^{\lambda})= \inf \left\{t\geq 0 : e^{\lambda
t } U^{\lambda}_{t}=b \right\}$, for any $b\geq0$. Then, an Ornstein-Uhlenbeck
extension of identity in law $\left.ii\right)$ in Proposition \ref{remid} is the
following:
\beq\label{OUbougerolequation}
    \theta^{Z^{\lambda}}_{T^{(\lambda)}_{b}(U^{\lambda})}  \stackrel{(law)}{=} C_{a(b)},
\eeq
where $a(x)= \arg \sinh (x)$ and $C_{A}$ is a Cauchy variable with parameter $A$.
\end{prop}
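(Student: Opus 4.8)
The plan is to deduce (\ref{OUbougerolequation}) from identity $\left.ii\right)$ of Proposition \ref{remid} via the classical deterministic time-change (Lamperti/Doob) representation of Ornstein--Uhlenbeck processes, which expresses an OU process as a Brownian motion run along a \emph{non-random} clock. The feature I want to exploit is that the winding functional $\theta^{\cdot}$ is insensitive to multiplication of the driving process by a deterministic positive scalar, so the radial distortion $e^{-\lambda t}$ created by the OU drift simply drops out of $\theta^{Z^{\lambda}}$, while the hitting time $T^{(\lambda)}_{b}(U^{\lambda})$ is tailor-made so that the same clock sends it to an ordinary Brownian first-passage time.

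Concretely, I would set $\sigma(t)=\int_{0}^{t}e^{2\lambda s}\,ds=\frac{e^{2\lambda t}-1}{2\lambda}$, a deterministic, continuous, strictly increasing bijection of $[0,\infty)$ onto itself. It\^{o}'s formula applied to $e^{\lambda t}Z^{\lambda}_{t}$ and to $e^{\lambda t}U^{\lambda}_{t}$ gives
\beqq
e^{\lambda t}Z^{\lambda}_{t}=W_{\sigma(t)}\ ,\qquad e^{\lambda t}U^{\lambda}_{t}=\beta_{\sigma(t)}\ ,
\eeqq
where $(W_{u},u\geq0)$ is a planar Brownian motion started at $Z^{\lambda}_{0}$ and $(\beta_{u},u\geq0)$ an independent linear Brownian motion started at $U^{\lambda}_{0}$. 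Since $e^{\lambda t}>0$ is deterministic, the continuous determination of $\arg(Z^{\lambda}_{t})$ coincides with that of $\arg(W_{\sigma(t)})$, whence $\theta^{Z^{\lambda}}_{t}=\theta^{W}_{\sigma(t)}$ for all $t\geq0$. On the other hand $T^{(\lambda)}_{b}(U^{\lambda})=\inf\{t\geq0:\beta_{\sigma(t)}=b\}=\sigma^{-1}\!\big(T^{\beta}_{b}\big)$, with $T^{\beta}_{b}=\inf\{u\geq0:\beta_{u}=b\}$, because $\sigma$ is deterministic and increasing. Evaluating the first identity at $t=T^{(\lambda)}_{b}(U^{\lambda})$, for which $\sigma(t)=T^{\beta}_{b}$, yields
\beqq
\theta^{Z^{\lambda}}_{T^{(\lambda)}_{b}(U^{\lambda})}=\theta^{W}_{T^{\beta}_{b}}\ ,
\eeqq
which (with $W$, $\beta$ independent and in the initial configuration of that statement) is exactly the left-hand side of identity $\left.ii\right)$ of Proposition \ref{remid}, hence is distributed as $C_{a(b)}$. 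One may also close the argument by hand: writing $\theta^{W}_{u}=\gamma_{H^{W}_{u}}$ through the skew-product (\ref{skewproduct}) with $\gamma$ a Brownian motion independent of $H^{W}$, and combining the subordination identity (\ref{subord}) with $T^{\beta}_{b}\stackrel{(law)}{=}\tau_{b}$, one gets $H^{W}_{T^{\beta}_{b}}\stackrel{(law)}{=}\tau_{a(b)}\stackrel{(law)}{=}T^{B}_{a(b)}$, so that $\theta^{W}_{T^{\beta}_{b}}\stackrel{(law)}{=}\gamma_{T^{B}_{a(b)}}\stackrel{(law)}{=}C_{a(b)}$.

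The It\^{o}-formula computations and the time-change bookkeeping are routine; the one point that genuinely requires care is the \emph{identification of the initial data}. I would check that the reduction to the configuration of Proposition \ref{remid} is legitimate --- rotation invariance of planar Brownian motion together with the Brownian scaling property of the winding process allow one to take $Z^{\lambda}_{0}=1$, and a spatial translation of $\beta$ adjusts $U^{\lambda}_{0}$ to the value required there --- and that independence is preserved along the transformations, so that $\gamma$, being a measurable functional of $W$, remains independent of $\beta$. Once this is in place, (\ref{OUbougerolequation}) is simply Proposition \ref{remid}$\left.ii\right)$ transported through the deterministic Lamperti clock $\sigma$.
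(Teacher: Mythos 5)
Your proposal is correct and follows essentially the same route as the paper: both write the OU processes as Brownian motions run along the deterministic clock $\sigma(t)=\frac{e^{2\lambda t}-1}{2\lambda}$ (the paper via Dambis--Dubins--Schwarz in equations (\ref{OUeB})--(\ref{eU})), observe that the deterministic positive factor $e^{-\lambda t}$ leaves the winding unchanged so that $\theta^{Z^{\lambda}}_{t}=\theta^{\mathbb{B}}_{\sigma(t)}$, invert the clock to identify $T^{(\lambda)}_{b}(U^{\lambda})$ with $\sigma^{-1}(T^{\delta}_{b})$, and conclude by Proposition \ref{remid} $\left.ii\right)$. Your explicit attention to the initial data and to the preservation of independence is a point the paper glosses over, but it does not change the argument.
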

\begin{proof}
First, for Ornstein-Uhlenbeck processes, is well known that \cite{ReY99},
with $\left(\mathbb{B}_{t},t\geq0\right)$ denoting a complex valued Brownian motion starting from 1,
Dambis-Dubins-Schwarz Theorem yields:
\beq\label{OUeB}
    Z_{t} &=& e^{-\lambda t} \left( 1 + \int^{t}_{0} e^{\lambda s} d\tilde{Z_{s}} \right) \nonumber \\
          &=& e^{-\lambda t} \left( \mathbb{B}_{\alpha_{t}} \right),
\eeq
Let us consider a second Ornstein-Uhlenbeck process $(U^{\lambda}_{t},t\geq 0)$
independent of the first one. Taking now equation (\ref{OUeB})
for $U^{\lambda}_{t}$ (1-dimensional case) we have:
\beq\label{eU}
    e^{\lambda t } U^{\lambda}_{t}= \delta_{( \frac{e^{2\lambda t}-1}{2 \lambda} )},
\eeq
where $(\delta_{t},t\geq 0)$ is a real valued Brownian
motion starting from 1. \\
Second, applying It\^{o}'s formula to (\ref{OUeB}) and dividing by $Z_{s}$, we obtain
$(\alpha_{t}=\int^{t}_{0} e^{2\lambda s} ds = \frac{e^{2\lambda t}-1}{2 \lambda})$:
\beqq
\mathrm{Im} \left(\frac{dZ_{s}}{Z_{s}}\right) =
\mathrm{Im}
\left(\frac{d\mathbb{B}_{\alpha_{s}}}{\mathbb{B}_{\alpha_{s}}}\right),
\eeqq
hence:
\beq\label{thetaBMOU}
\theta^{Z}_{t} = \theta^{\mathbb{B}}_{\alpha_{t}} \ \ .
\eeq
By inverting $\alpha_{t}$, it follows now that:
\beq\label{Tchat}
    T^{(\lambda)}_{c}=\frac{1}{2\lambda}\ln \left(1+2\lambda T^{|\theta|^{\mathbb{B}}}_{c}\right).
\eeq
Similarly, for the 1-dimensional case we have:
\beq\label{TbU}
    T^{(\lambda)}_{b}(U^{\lambda})=\frac{1}{2\lambda}\ln \left(1+2\lambda T^{\delta}_{b}\right).
\eeq
Equation (\ref{thetaBMOU}) for $t=\frac{1}{2\lambda} \ln
\left(1+2\lambda T^{\delta}_{b}\right)$, equivalently:
$\alpha(t)=T^{\delta}_{b}$ becomes:
\beqq
\theta^{Z^{\lambda}}_{T^{(\lambda)}_{b}(U^{\lambda}) } =
\theta^{Z^{\lambda}}_{\frac{1}{2\lambda}\ln \left(1+2\lambda
T^{\delta}_{b}\right) }= \theta^{\mathbb{B}}_{u=T^{\delta}_{b}}
\stackrel{(law)}{=} C_{a(b)},
\eeqq
where the last equation in law follows precisely from statement $\left.ii\right)$ in Proposition \ref{remid}.
\end{proof}

\section{Multidimensional extensions of Bougerol's identity}\label{secmulti}

\subsection{The law of the couple $\left(\sinh(\beta_{t}),\sinh(L_{t})\right)$}
A first 2-dimensional extension of Bougerol's identity was obtained by Bertoin, Dufresne and Yor in \cite{BDY12a}
(for a first draft, see also \cite{DuY11}).
With $(L_{t},t\geq 0)$ denoting the local time at 0 of $B$, we have:
\begin{theo}\label{BDY}
For fixed $t$, the 3 following 2-dimensional random variables are equal in law:
\beq\label{eqBDY}
(\sinh(B_{t}),\sinh(L_{t}))\stackrel{(law)}{=}(\beta_{A_{t}},\exp(-B_{t}) \ \lambda_{A_{t}})
\stackrel{(law)}{=} (\exp(-B_{t}) \ \beta_{A_{t}}, \lambda_{A_{t}}),
\eeq
where $(\lambda_{u},u\geq 0)$ is the local time of $\beta$ at 0.
\end{theo}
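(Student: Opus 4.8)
## Proof proposal

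\textbf{Overall strategy.} The plan is to mimic the Alili--Dufresne--Yor proof of Bougerol's identity recalled in the introduction, but now at the level of \emph{two} coordinates, the second being a local time. The key fact is that the bracketed semimartingale $Q_t = e^{B_t}\int_0^t e^{-B_s}\, d\gamma_s$ already appeared (via time reversal) as a process equal in law to $\beta_{A_t}$, and I want to track, alongside $Q_t$, its local time at $0$ and compare with $(\sinh B_t, \sinh L_t)$. So I would set up three two-dimensional processes, show that two of them solve the same martingale problem / SDE system as $(\sinh B_t, \sinh L_t)$, and invoke uniqueness. The passage between the second and third members of \eqref{eqBDY} is the easier, purely algebraic part: it is the identity $\beta_{A_t} \stackrel{(law)}{=} e^{-B_t}\beta_{A_t}$ coming from the same time-reversal $s \mapsto t-s$ for $B$ used in \eqref{Q}, applied jointly with the corresponding time-reversal identity for $\lambda_{A_t}$, so I would dispatch it first.

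\textbf{Step 1: time reversal, joint version.} First I would fix $t$ and write $\beta_{A_t(B)} = \int_0^t e^{B_s}\, d\gamma_s$ as in \eqref{Q}, and simultaneously express $\lambda_{A_t(B)}$, the local time of $\beta$ at $0$ evaluated at the clock $A_t$, via the occupation-time/Tanaka formula as a functional of $(B,\gamma)$ on $[0,t]$. Reversing time $s \mapsto t-s$ in the Brownian path $B$ (which leaves its law invariant) turns $\int_0^t e^{B_s}\, d\gamma_s$ into $e^{B_t}\int_0^t e^{-B_s}\, d\tilde\gamma_s = Q_t$ and, crucially, carries the local-time functional over as well, because local time is a continuous additive functional that behaves predictably under this time change (the factor $e^{B_t}$ comes out of the occupation density). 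This yields jointly
\[
(\beta_{A_t}, \lambda_{A_t}) \stackrel{(law)}{=} (Q_t, \Lambda_t),
\]
where $\Lambda_t$ is the appropriate local time of $Q$ at $0$ up to a deterministic scaling that I will pin down. This is the step I expect to require the most care: getting the exact normalizing factor ($e^{-B_t}$ versus $e^{B_t}$, and which of the two ``$\exp(\pm B_t)\lambda_{A_t}$'' forms appears) right, since local times transform with a density rather than as a random variable, and one must be scrupulous about the difference between the local time of $Q$ at $0$ and the time-changed local time of $\beta$.

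\textbf{Step 2: SDE identification.} Having reduced to $(Q_t,\Lambda_t)$, I would apply It\^o's formula (Tanaka's formula for the local-time coordinate), exactly as in \eqref{Q1}, to get
\[
dQ_t = \tfrac12 Q_t\, dt + \sqrt{1+Q_t^2}\, d\delta_t, \qquad d\Lambda_t = \text{(local time term, increasing only on }\{Q_t=0\}),
\]
and compare with the pair $(\sinh B_t, \sinh L_t)$: by It\^o, $\sinh B_t = \int_0^t \sqrt{1+\sinh^2 B_s}\, dB_s + \tfrac12\int_0^t \sinh B_s\, ds$ as in \eqref{S}, while $\sinh L_t = \int_0^t \cosh L_s\, dL_s$ since $L$ has bounded variation, and $\cosh L_s = \sqrt{1+\sinh^2 L_s}$, with $dL_s$ supported on $\{B_s = 0\} = \{\sinh B_s = 0\}$. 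Thus both two-dimensional processes satisfy the same system: a diffusion coordinate with drift $\tfrac12 x$ and dispersion $\sqrt{1+x^2}$, driven by a Brownian motion, together with a second coordinate of the form $\sinh$ of a process that increases only when the first coordinate vanishes, started from $(0,0)$. Since these coefficients are Lipschitz on the relevant domain and the local-time interaction is the standard Skorokhod/Tanaka one, pathwise uniqueness holds, so the two pairs are equal in law, which combined with Step 1 gives the first equality in \eqref{eqBDY}; the second equality is the algebraic reshuffling already dealt with (or re-derived by the same reversal with the roles of the $e^{\pm B_t}$ factors swapped). Finally I would remark that setting $t \to$ the level of a first passage recovers nothing new here, but simply deleting the second coordinate recovers \eqref{boug}, as a consistency check.

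\QED
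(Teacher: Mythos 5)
Your plan founders on the second coordinate, and the obstruction is one the paper itself flags. What an SDE/uniqueness argument can deliver is a \emph{process-level} identity, and the correct process-level statement is precisely Corollary \ref{BDYcor}: $(\sinh(B_t),L_t)_{t\ge0}\stackrel{(law)}{=}\bigl(e^{-B_t}\beta_{A_t},\int_0^te^{-B_s}\,d\lambda_{A_s}\bigr)_{t\ge0}$, where the second coordinate is the local time at $0$ of the first --- with no $\sinh$ and with $\int_0^te^{-B_s}\,d\lambda_{A_s}$ in place of $\lambda_{A_t}$. The theorem, by contrast, asserts that at fixed $t$ the pair $(\sinh(B_t),\sinh(L_t))$ matches $(e^{-B_t}\beta_{A_t},\lambda_{A_t})$; comparing with the Corollary, the whole content is the ``local-time Bougerol identity'' $\sinh(L_t)\stackrel{(law)}{=}\lambda_{A_t}$ taken \emph{jointly} with the first coordinate. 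As Remark \ref{BDYrem2} records (citing \cite{BeY12}), this identity holds only for fixed $t$ and fails at the level of processes, so it cannot be an output of pathwise uniqueness for any SDE system. Concretely, your Step 1 already goes wrong: the local time at $0$ of $Q$ satisfies $\ell^0_t(Q)\stackrel{(law)}{=}L_t$ (since $Q\stackrel{(law)}{=}\sinh(B)$ as processes and $\sinh'(0)=1$), whereas $\lambda_{A_t}\stackrel{(law)}{=}\sinh(L_t)$; the discrepancy is a nonlinear $\sinh$, not a ``deterministic scaling,'' and no time-reversal of $B$ alone can absorb it because $\beta_{A_\cdot}$ and $Q_\cdot$ are different processes sharing only one-dimensional marginals, so their local times are not exchangeable under a fixed-time identity.

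The paper's proof is of a completely different nature and is essentially computational: one reduces to the absolute-value form \eqref{eqBDY2}, replaces $t$ by an independent exponential time $S_p$, uses the independence of $B_{S_p}$ and $L_{S_p}$ (splitting at the last zero $g_{S_p}$) together with the fact that $L_{S_p}$ and $|B_{S_p}|$ share the explicit density $\sqrt{2p}\,e^{-u\sqrt{2p}}$, and then verifies the first equality in law by computing joint Mellin transforms of both sides; the equivalence of the second and third members is obtained, as you correctly anticipated, by conditioning on $B$, scaling $(\beta_{A_t},\lambda_{A_t})\stackrel{(law)}{=}\sqrt{A_t}\,(\beta_1,\lambda_1)$, and time-reversing $B$ to move the factor $e^{-B_t}$ from one coordinate to the other. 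So the easy algebraic step of your proposal is sound, but the main equality needs the exponential randomization and transform computation (or an equivalent distributional argument); the SDE route proves a different (and weaker, for present purposes) statement.
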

\begin{rem}\label{BDYrem}
Theorem \ref{BDY} can be equivalently stated as: for fixed $t$, the 3 following  2-dimensional random variables are equal in law:
\beq\label{eqBDY2}
(\sinh(|B_{t}|),\sinh(L_{t}))\stackrel{(law)}{=}(|\beta|_{A_{t}},\exp(-B_{t}) \ \lambda_{A_{t}})
\stackrel{(law)}{=} (\exp(-B_{t}) \ |\beta|_{A_{t}}, \lambda_{A_{t}}).
\eeq
Using now Paul L\'{e}vy's celebrated identity in law (see e.g. \cite{ReY99}):
\beq
\left((\bar{B}_{t}-B_{t},\bar{B}_{t}),t\geq0\right)\stackrel{(law)}{=}\left((|B_{t}|,L_{t}),t\geq0\right),
\eeq
we can reformulate (\ref{eqBDY}) or (\ref{eqBDY2}), and we obtain:
\beq\label{eqBDY3}
(\sinh(\bar{B}_{t}-B_{t}),\sinh(\bar{B}_{t}))&\stackrel{(law)}{=}&\left((\bar{\beta}-\beta)_{A_{t}},\exp(-B_{t}) \ \bar{\beta}_{A_{t}}\right) \nonumber \\
&\stackrel{(law)}{=}& \left(\exp(-B_{t}) \ (\bar{\beta}-\beta)_{A_{t}}, \bar{\beta}_{A_{t}}\right).
\eeq
The latter is particularly interesting when compared with the Wiener-Hopf factorization for Brownian motion.
In particular, if we consider $\mathbf{e}_{q}$ an independent exponential random variable of parameter $q$,
then $\bar{B}_{\mathbf{e}_{q}}$ is independent of $B_{\mathbf{e}_{q}}-\bar{B}_{\mathbf{e}_{q}}$. This tells that
the two random variables appearing on the right hand side of (\ref{eqBDY3}), when taken at $\mathbf{e}_{q}$, are independent.
\end{rem}
\begin{rem}\label{BDYrem2}
Considering only the second processes of the first and the third part of (\ref{eqBDY}) (or equivalently of
(\ref{eqBDY2})), we obtain a "local time" version of Bougerol's identity:
\beq\label{eqBDYlocaltimes}
\sinh(L_{t})\stackrel{(law)}{=}\lambda_{A_{t}},
\eeq
which (as was shown in \cite{BeY12}), similar to the Brownian motion case, is true only
for fixed $t$ and not in the level of processes.
\end{rem}
\begin{proof} \emph{(Theorem \ref{BDY})} \\
From Remark \ref{BDYrem} it suffices to prove (\ref{eqBDY2}). \\
First, we denote $S_{p}$, $p\geq0$ an exponential variable with parameter $p$
independent from $B$ and $g_{t}=\sup\{u<t:B_{u}=0\}$. We know that
$\left(B_{u}, u\leq g_{S_{p}}\right)$ and $\left(B_{g_{S_{p}}+u}, u\leq S_{p}-g_{S_{p}}\right)$ are independent,
hence $L_{S_{p}}$ and $B_{S_{p}}$ are also independent.
We also know that $L_{t}$ and $|B_{t}|$ have the same law. Hence, using the following computation:
for every $l\geq0$, with $(\tau_{l},l\geq 0)$ denoting the time $L$ reaches $l$,
$$P\left(L_{S_{p}}\geq l\right)=P\left(S_{p}\geq \tau_{l}\right)=E\left[\exp(-p \tau_{l})\right]=\exp(-l \sqrt{2p}),$$
we deduce that the common density of $L_{S_{p}}$ and $|B_{S_{p}}|$ is:
$$\sqrt{2p}\exp(-u \sqrt{2p}), \ \ u\geq 0.$$
Equivalently, we have:
$$\sqrt{2\mathbf{e}}(|\beta(1)|,\lambda(1))\stackrel{(law)}{=}(\mathbf{e},\mathbf{e'}),$$
where on the left hand side $\mathbf{e}$ and $\mathbf{e'}$ are two independent copies of $S_{1}$
independent from $\beta$. \\
For the second identity in law in Theorem \ref{BDY}, it suffices to remark that
$$(\beta_{A_{t}},\exp(-B_{t}) \ \lambda_{A_{t}})
\stackrel{(law)}{=} (\sqrt{A_{t}}\beta_{1},\exp(-B_{t}) \sqrt{A_{t}} \ \lambda_{1}),$$
and use a time reversal argument. \\
For the first identity in law we use an exponential time $S_{p}$ and we
compute the joint Mellin transforms in both sides in order to
show that:
$$\sqrt{2\mathbf{e}}(\sinh(|B|_{S_{p}}),\sinh(L_{S_{p}}))\stackrel{(law)}{=}
\sqrt{2\mathbf{e}}(\exp(-B_{S_{p}})\sqrt{A_{S_{p}}} \ |\beta_{1}|, \sqrt{A_{S_{p}}} \ \lambda_{A_{t}}).$$
For further details we address the reader to \cite{BDY12a}.
\end{proof}
Using now Tanaka's formula we can also obtain the following identity in law for 2-dimensional processes:
\begin{cor}\label{BDYcor}
\beq\label{eqBDYcor}
\left(\sinh(B_{t}),L_{t}\right)_{t\geq0}\stackrel{(law)}{=}
\left(\exp(-B_{t}) \ \beta_{A_{t}}, \int^{t}_{0} \exp(-B_{s})d\lambda_{A_{s}}\right)_{t\geq0},
\eeq
where, in each part, the second process is the local time at level 0 and time $t$ of the first one.
\end{cor}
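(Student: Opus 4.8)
The plan is to start from the identity of Theorem \ref{BDY}, specifically the equality in law between the first and the third 2-dimensional random variables, and to upgrade it from a statement at fixed $t$ to a statement at the level of processes by identifying both sides as solutions of the same (pathwise) system of equations driven by the same noise. First I would write down, via Tanaka's formula, the semimartingale decomposition of $\sinh(B_{t})$ together with its local time at level $0$: since $\sinh$ is a smooth strictly increasing bijection of $\mathbb{R}$ with $\sinh(0)=0$, the process $S_{t}\equiv\sinh(B_{t})$ from (\ref{S}) has $S_{t}=\int_{0}^{t}\sqrt{1+S_{s}^{2}}\,dB_{s}+\tfrac12\int_{0}^{t}S_{s}\,ds$, and its local time at $0$, call it $\ell_{t}^{S}(0)$, is related to $L_{t}$ by the occupation-times/Tanaka identity; because the diffusion coefficient at $0$ equals $1$ (as $\sqrt{1+S^{2}}=1$ when $S=0$), one gets cleanly $\ell_{t}^{S}(0)=L_{t}$. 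Hence the left-hand pair $(\sinh(B_{t}),L_{t})$ is exactly $(S_{t},\ell^{S}_{t}(0))$, i.e.\ the second coordinate really is the local time at $0$ of the first, as the statement asserts.

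Next I would treat the right-hand side. Put $R_{t}\equiv\exp(-B_{t})\,\beta_{A_{t}}=\exp(-B_{t})\int_{0}^{t}\exp(B_{s})\,d\beta'_{s}$ for a suitable Brownian motion; this is the Bougerol functional $Q$ of (\ref{Q}) read in the forward direction, and by the computation (\ref{Q1}) (It\^o's formula, with the $dA_{t}=e^{2B_{t}}dt$ time change applied to $\beta$) it satisfies $dR_{t}=\tfrac12 R_{t}\,dt+\sqrt{R_{t}^{2}+1}\,d\delta_{t}$ for a Brownian motion $\delta$ built from $B$ and $\beta$. Thus $R$ solves the \emph{same} SDE with Lipschitz coefficients as $S$, so by pathwise uniqueness $R\stackrel{(law)}{=}S$ \emph{as processes} — this is just the process-level version of the proof of (\ref{boug}) recalled in the Introduction. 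It remains to identify the local time at $0$ of $R$. Again the diffusion coefficient $\sqrt{R^{2}+1}$ equals $1$ at $R=0$, so Tanaka gives $\ell^{R}_{t}(0)=$ (the bracket-normalized occupation density of $R$ at $0$); the claim of the corollary is that this equals $\int_{0}^{t}\exp(-B_{s})\,d\lambda_{A_{s}}$. To see this I would write $\beta_{A_{t}}=M_{t}$, a martingale with $d\langle M\rangle_{t}=e^{2B_{t}}dt$, whose local time at $0$ is $\ell^{M}_{t}(0)=\lambda_{A_{t}}$ by the time-change rule for local times (Revuz--Yor); then, since $R_{t}=e^{-B_{t}}M_{t}$ and $e^{-B}$ is a continuous positive process, the balayage/Tanaka formula for $R$ gives $d\ell^{R}_{t}(0)=e^{-B_{t}}\,d\ell^{M}_{t}(0)=e^{-B_{t}}\,d\lambda_{A_{t}}$, because local time of $R$ at $0$ only charges the (Lebesgue-null) set $\{R=0\}=\{M=0\}$ and there the multiplier is $e^{-B_{t}}$.

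Assembling these pieces: the pathwise identity $R\stackrel{(law)}{=}S$ as processes, together with the two local-time computations $\ell^{S}_{t}(0)=L_{t}$ and $\ell^{R}_{t}(0)=\int_{0}^{t}e^{-B_{s}}\,d\lambda_{A_{s}}$, yields
\beqq
\left(\sinh(B_{t}),L_{t}\right)_{t\geq0}=\left(S_{t},\ell^{S}_{t}(0)\right)_{t\geq0}\stackrel{(law)}{=}\left(R_{t},\ell^{R}_{t}(0)\right)_{t\geq0}=\left(e^{-B_{t}}\beta_{A_{t}},\int_{0}^{t}e^{-B_{s}}\,d\lambda_{A_{s}}\right)_{t\geq0},
\eeqq
which is (\ref{eqBDYcor}), and the remark that in each side the second coordinate is the local time at $0$ of the first is built into the proof. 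I expect the main obstacle to be the rigorous handling of the local-time change of variables on the right: one must justify that the time-changed local time of $\beta$ is $\lambda_{A_{t}}$ and that multiplying the martingale $M$ by the continuous positive factor $e^{-B}$ transforms $d\ell^{M}(0)$ into $e^{-B}\,d\ell^{M}(0)=e^{-B}\,d\lambda_{A}$ — this uses that $\ell^{R}(0)$ is carried by $\{M=0\}$ and Tanaka's formula applied to $R=e^{-B}M$, i.e.\ that the bounded-variation part of $|R|$ coming from the product structure does not contribute extra local time. All other steps (Tanaka, It\^o, pathwise uniqueness for the Lipschitz SDE) are routine and already invoked earlier in the excerpt.
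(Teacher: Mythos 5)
Your proposal is correct and follows essentially the route the paper intends: the paper offers no detailed argument beyond the phrase ``using Tanaka's formula,'' and your proof supplies exactly the missing content --- the process-level identity $\bigl(\sinh(B_t)\bigr)\stackrel{(law)}{=}\bigl(e^{-B_t}\beta_{A_t}\bigr)$ via the common Lipschitz SDE (as in the Introduction and in (\ref{bougADY})), together with the Tanaka/occupation-density identifications $\ell^{S}(0)=L$ (since $\sinh'(0)=1$) and $d\ell^{R}(0)=e^{-B}\,d\lambda_{A}$ (since the cross-variation terms in $d|e^{-B}M|$ cancel and $\ell^{M}(0)=\lambda_{A}$ by time change). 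The only point worth making explicit is that the local time at $0$ is a measurable functional of the path of the first coordinate (as the occupation density relative to $\langle X\rangle_t=\int_0^t(1+X_s^2)\,ds$), so the equality in law of the first coordinates as processes does transfer to the pairs.
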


\subsection{Another two-dimensional extension}\label{subsec2dnew}
In this Subsection we will study the joint distribution of:
\beq
	\left( X^{(1)}_{u},X^{(2)}_{u} \right) = \left( \exp(- B_{u}) \int^{u}_{0} d\xi^{(1)}_{v} \exp( B_{v}), \  \exp(- 2B_{u}) \int^{u}_{0} d\xi^{(2)}_{v} \exp( 2B_{v}) \right),
\eeq
where $(\xi^{(1)}_{v},v\geq 0)$, $(\xi^{(2)}_{v},v\geq 0)$ and $(B_{u},u\geq 0)$ are three independent Brownian motions.
Hence, we obtain a new 2-dimensional extension which states the following:
\begin{prop}\label{propVYboug}
We consider $(B^{(1)}_{t}, t\geq0)$ and $(B^{(2)}_{t}, t\geq0)$ two real dependent Brownian motions, such that:
\beq\label{B1B2dep}
 d<B^{(1)},B^{(2)}>_{v} = \tanh(B^{(1)}_{v}) \; \tanh(2B^{(2)}_{v}) \; dv.
\eeq
For the two-dimensional process $\left( X^{(1)}_{u},X^{(2)}_{u} \right)$, we have:
\begin{enumerate}[(i)]
\item In the level of processes:
\beq\label{X1X2process}
\left( X^{(1)}_{u},X^{(2)}_{u}, u\geq0 \right) &\stackrel{(law)}{=}& \left( \sinh (B^{(1)}_{u}), \frac{1}{2} \sinh (2B^{(2)}_{u}), u\geq0  \right)
\eeq
\item For $u$ fixed,
\beq\label{X1X2fixed}
\left( X^{(1)}_{u},X^{(2)}_{u} \right)
&\stackrel{(law)}{=}& \left( \beta^{(1)}_{\left( \int^{u}_{0} dv \: \exp(2B_{v}) \right)} , \beta^{(2)}_{\left( \int^{u}_{0} dv \: \exp(4B_{v}) \right)} \right).
\eeq
\end{enumerate}
\end{prop}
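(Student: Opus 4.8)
The plan is to prove both parts by the same It\^o-calculus strategy that underlies the Alili--Dufresne--Yor proof of Bougerol's identity: show that the left-hand and right-hand two-dimensional processes satisfy the same system of stochastic differential equations with locally Lipschitz coefficients, and then invoke pathwise uniqueness. First I would treat part $(i)$. Write $S^{(1)}_u = \sinh(B^{(1)}_u)$ and $S^{(2)}_u = \tfrac12\sinh(2B^{(2)}_u)$. Applying It\^o's formula exactly as in \eqref{S} gives
\[
dS^{(1)}_u = \sqrt{1+(S^{(1)}_u)^2}\, dB^{(1)}_u + \tfrac12 S^{(1)}_u\, du,\qquad
dS^{(2)}_u = \sqrt{1+(2S^{(2)}_u)^2}\, dB^{(2)}_u + 2 S^{(2)}_u\, du,
\]
where I used $\cosh(B^{(1)}) = \sqrt{1+(S^{(1)})^2}$ and $\cosh^2(2B^{(2)}) = 1+\sinh^2(2B^{(2)}) = 1+(2S^{(2)})^2$, and the cross-variation of the two driving martingales is, by \eqref{B1B2dep}, $\cosh(B^{(1)}_u)\cosh(2B^{(2)}_u)\tanh(B^{(1)}_u)\tanh(2B^{(2)}_u)\,du = \sinh(B^{(1)}_u)\sinh(2B^{(2)}_u)\,du = 2 S^{(1)}_u S^{(2)}_u\,du$. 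On the other side, apply It\^o's formula to $X^{(1)}_u = e^{-B_u}\int_0^u e^{B_v}d\xi^{(1)}_v$ and $X^{(2)}_u = e^{-2B_u}\int_0^u e^{2B_v}d\xi^{(2)}_v$. The computation is the one leading to \eqref{Q1}: one finds $dX^{(1)}_u = \tfrac12 X^{(1)}_u\,du - X^{(1)}_u\,dB_u + d\xi^{(1)}_u$ and $dX^{(2)}_u = 2 X^{(2)}_u\,du - 2 X^{(2)}_u\,dB_u + d\xi^{(2)}_u$. The martingale parts $-X^{(1)}\,dB + d\xi^{(1)}$ and $-2X^{(2)}\,dB + d\xi^{(2)}$ have quadratic variations $(1+(X^{(1)})^2)\,du$ and $(1+4(X^{(2)})^2)\,du$ respectively (since $\xi^{(1)},\xi^{(2)},B$ are independent), so by L\'evy's characterization each can be written as $\sqrt{1+(X^{(1)})^2}\,d\delta^{(1)}$ and $\sqrt{1+(2X^{(2)})^2}\,d\delta^{(2)}$ for Brownian motions $\delta^{(1)},\delta^{(2)}$; and their mutual bracket is $2 X^{(1)}_u X^{(2)}_u\,du$ because only the two $dB$-terms contribute: $(-X^{(1)})(-2X^{(2)})\,du$. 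Thus $(X^{(1)},X^{(2)})$ and $(S^{(1)},S^{(2)})$ solve the same SDE system, driven by a pair of Brownian motions with correlation structure $d\langle\delta^{(1)},\delta^{(2)}\rangle_u = \rho(X^{(1)}_u,X^{(2)}_u)\,du$ for the explicit function $\rho(x,y) = 2xy/\sqrt{(1+x^2)(1+4y^2)}$, and with matching initial condition $(0,0)$; pathwise uniqueness for this locally Lipschitz system gives \eqref{X1X2process}.

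Next, for part $(ii)$: by the time-reversal identity already used in \eqref{Q}, for fixed $u$ one has
\[
X^{(1)}_u = e^{-B_u}\int_0^u e^{B_v}\,d\xi^{(1)}_v \ \stackrel{(law)}{=}\ \int_0^u e^{B_v}\,d\xi^{(1)}_v \ \stackrel{(law)}{=}\ \beta^{(1)}_{\int_0^u e^{2B_v}\,dv},
\]
and similarly $X^{(2)}_u \stackrel{(law)}{=} \beta^{(2)}_{\int_0^u e^{4B_v}\,dv}$; the point is to show these two hold \emph{jointly}, with the \emph{same} Brownian path $B$ and two \emph{independent} external Brownian motions $\beta^{(1)},\beta^{(2)}$. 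The cleanest route is to combine part $(i)$ with a direct argument on the pair: applying the time-reversal $v \mapsto u-v$ to the Brownian motion $B$ simultaneously in both coordinates (which is the content of the equality in law in \eqref{Q}) turns $(X^{(1)}_u, X^{(2)}_u)$ into $\big(\int_0^u e^{B_v}d\xi^{(1)}_v, \int_0^u e^{2B_v}d\xi^{(2)}_v\big)$ in law; then, conditionally on the path $(B_v, 0\le v\le u)$, these two stochastic integrals are Gaussian with variances $\int_0^u e^{2B_v}dv$ and $\int_0^u e^{4B_v}dv$ and, crucially, \emph{zero covariance} because $\xi^{(1)}$ and $\xi^{(2)}$ are independent. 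Hence conditionally they are distributed as $\beta^{(1)}_{\int_0^u e^{2B_v}dv}$ and $\beta^{(2)}_{\int_0^u e^{4B_v}dv}$ with $\beta^{(1)},\beta^{(2)}$ independent of each other and of $B$; integrating out the conditioning yields \eqref{X1X2fixed}.

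The step I expect to be the main obstacle is the bookkeeping in part $(i)$ ensuring that the \emph{cross-variation} terms genuinely agree on both sides. On the ``$\sinh$'' side the correlation between $dB^{(1)}$ and $dB^{(2)}$ was engineered precisely by \eqref{B1B2dep} so that $d\langle S^{(1)}, S^{(2)}\rangle = 2 S^{(1)} S^{(2)}\,du$; on the ``$X$'' side the correlation arises automatically because both $X^{(1)}$ and $X^{(2)}$ are driven by the \emph{same} $B$, producing $d\langle X^{(1)}, X^{(2)}\rangle = 2 X^{(1)} X^{(2)}\,du$. Verifying that these two quadratic-covariation expressions coincide after the substitution $X^{(i)}\leftrightarrow S^{(i)}$ — and that no extra $d\langle\xi^{(1)},\xi^{(2)}\rangle$ or $d\langle\xi^{(i)},B\rangle$ cross-terms appear, which is guaranteed by the mutual independence of $\xi^{(1)},\xi^{(2)},B$ — is what makes \eqref{B1B2dep} the ``right'' dependence and what must be checked carefully. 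Once the two systems are seen to be identical, the appeal to pathwise uniqueness for SDEs with locally Lipschitz (indeed smooth) coefficients is standard, and part $(ii)$ then follows either as a corollary of $(i)$ evaluated at fixed $u$ or, more directly, by the conditional-Gaussian/time-reversal argument sketched above; one should double-check that the time-reversal of $B$ can be applied to both coordinates simultaneously, which it can since it is a single transformation of the single path $B$.
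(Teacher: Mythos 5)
Your proof of part (i) is correct and is essentially the paper's argument: you compute the same It\^o decompositions, obtain the same drifts $\tfrac12 x_1$, $2x_2$, the same diffusion coefficients $1+x_1^2$, $1+4x_2^2$, and the same cross-variation $2x_1x_2\,du$ on both sides (the paper phrases the conclusion as equality of infinitesimal generators rather than pathwise uniqueness, but this is the same identification). One small gloss: to literally invoke pathwise uniqueness you should rewrite the correlated-noise system as $dZ=b(Z)\,dt+\sigma(Z)\,dW$ for a standard planar Brownian motion $W$ with a locally Lipschitz $\sigma$; this is harmless here because the diffusion matrix has determinant $1+x_1^2+4x_2^2\geq 1$, so its Cholesky factor is smooth. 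For part (ii), your route is genuinely more substantive than the paper's, which essentially just asserts the identity \eqref{eqlaw} and passes to the density \eqref{densityp}. Your argument --- condition on the path of $B$, note that $X^{(1)}_u$ and $X^{(2)}_u$ are then independent centered Gaussians with variances $e^{-2B_u}\int_0^u e^{2B_v}dv$ and $e^{-4B_u}\int_0^u e^{4B_v}dv$, and apply time reversal \emph{jointly} to both variances --- is the correct way to make the joint statement rigorous, and it shows in particular that $\beta^{(1)}$ and $\beta^{(2)}$ may be taken mutually independent (and independent of $B$), with all the dependence of the pair carried by the two clocks built from the common $B$. This is exactly what the paper's own density formula \eqref{densityp} encodes (the conditional density factorizes), even though the surrounding text describes $\beta^{(1)},\beta^{(2)}$ as ``dependent''; your version is the sharper and internally consistent one.
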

\begin{proof}
Let us define:
\beq
X^{(\alpha)}_{u} =  \exp(- \alpha B_{u}) \int^{u}_{0} d\xi^{(\alpha)}_{v} \exp( \alpha B_{v}),
\eeq
where $\alpha = 1,2$.
By It\^{o}'s formula, we have:
\beqq
	X^{(\alpha)}_{u} &=& \xi^{(\alpha)}_{u} + \int^{u}_{0} \left( \exp(- \alpha B_{v}) (-\alpha dB_{v}) + \frac{\alpha^{2}}{2} \exp(- \alpha B_{v}) dv \right) \; \left( \int^{v}_{0} d\xi^{(\alpha)}_{h} \exp( \alpha B_{h}) \right) \\
	&=& \xi^{(\alpha)}_{u} + \int^{u}_{0} \left(  -\alpha dB_{v} \; X^{(\alpha)}_{v} + \frac{\alpha^{2}}{2} \; X^{(\alpha)}_{v} \; dv \right).
\eeqq
Hence:
\beq
X^{(1)}_{u} &=& \xi^{(1)}_{u} - \int^{u}_{0} dB_{v} \; X^{(1)}_{v} + \frac{1}{2} \int^{u}_{0} X^{(1)}_{v} \; dv \nonumber \\
&=& \int^{u}_{0} d\eta^{(1)}_{v} \sqrt{\left(1+ \left(X^{(1)}_{v}\right)^{2}\right)} + \frac{1}{2} \int^{u}_{0} X^{(1)}_{v} \; dv \ ,
\eeq
and
\beq
X^{(2)}_{u} &=& \xi^{(2)}_{u} - 2 \int^{u}_{0} dB_{v} \; X^{(2)}_{v} + 2 \int^{u}_{0} X^{(2)}_{v} \; dv \nonumber \\
&=& \int^{u}_{0} d\eta^{(2)}_{v} \sqrt{\left(1+ 4\left(X^{(2)}_{v}\right)^{2}\right)} + 2 \int^{u}_{0} X^{(2)}_{v} \; dv,
\eeq
where $(\eta^{(1)}_{v},v\geq 0)$ and $(\eta^{(2)}_{v},v\geq 0)$ are two dependent Brownian motions, with quadratic variation:
\beq
d<\eta^{(1)},\eta^{(2)}>_{v} = \frac{2 X^{(1)}_{v} X^{(2)}_{v} dv}{\sqrt{\left(1+ \left(X^{(1)}_{v}\right)^{2}\right)} \sqrt{\left(1+ 4\left(X^{(2)}_{v}\right)^{2}\right)}}.
\eeq
Thus, we deduce that the infinitesimal generator of $\left( X^{(1)}_{u},X^{(2)}_{u} \right)$ is:
\beq
\frac{1}{2} \left[ \left( 1+x^{2}_{1} \right)\frac{\partial^{2}}{\partial x^{2}_{1}} + \left( 1+4x^{2}_{2} \right)\frac{\partial^{2}}{\partial x^{2}_{2}} + 4 x_{1} x_{2} \frac{\partial^{2}}{\partial x_{1} \partial x_{2}} \right]
+ \frac{x_{1}}{2} \frac{\partial}{\partial x_{1}} + 2 x_{2} \frac{\partial}{\partial x_{2}} \ .
\eeq
Let us now study the couple:
\beq
(x^{(1)}_{t},x^{(2)}_{t}) = \left( \sinh (B^{(1)}_{t}), \frac{1}{2} \sinh (2B^{(2)}_{t}) \right),
\eeq
where $(B^{(1)}_{t}, t\geq 0)$ and $(B^{(2)}_{t}, t\geq 0)$ are two dependent Brownian motions. By It\^{o}'s formula we have:
\beq
	x^{(1)}_{t} &=& \sinh (B^{(1)}_{t}) \nonumber \\
	&=& \int^{t}_{0} \cosh(B^{(1)}_{v}) \; dB^{(1)}_{v} + \frac{1}{2} \int^{t}_{0} \sinh(B^{(1)}_{v}) \; dv \nonumber \\
	&=& \int^{t}_{0} \sqrt{\left( 1+ (x^{(1)}_{v})^{2}\right)} \; dB^{(1)}_{v} + \frac{1}{2} \int^{t}_{0} x^{(1)}_{v} \; dv,
\eeq
and:
\beq
	x^{(2)}_{t} &=& \frac{1}{2} \sinh (2B^{(2)}_{t}) \nonumber \\
	&=& \int^{t}_{0} \cosh(2B^{(2)}_{v}) \; dB^{(2)}_{v} + \int^{t}_{0} \sinh(2B^{(2)}_{v}) \; dv \nonumber \\
	&=& \int^{t}_{0} \sqrt{\left( 1+ 4 (x^{(2)}_{v})^{2}\right)} \; dB^{(2)}_{v} + 2 \int^{t}_{0} x^{(2)}_{v} \; dv.
\eeq
Moreover, using (\ref{B1B2dep}):
\beq
d<\sinh (B^{(1)}),\frac{1}{2} \sinh (2B^{(2)})>_{v}
&=& \cosh(B^{(1)}_{v}) \; \cosh(2B^{(2)}_{v}) \; d<B^{(1)},B^{(2)}>_{v} \nonumber \\
&=& 2 \sinh(B^{(1)}_{v}) \; \frac{1}{2} \sinh(2B^{(2)}_{v}) \; dv.
\eeq
Finally, we have that $(x^{(1)}_{u},x^{(2)}_{u})$ has the same infinitesimal generator with $\left( X^{(1)}_{u},X^{(2)}_{u} \right)$. Hence,
we get part $\left(i\right)$ of the Proposition. \\ \\
For part $\left(ii\right)$, we fix $u$ and we have:
\beq\label{eqlaw}
 \left( \sinh (B^{(1)}_{u}), \frac{1}{2} \sinh (2B^{(2)}_{u}) \right) \stackrel{(law)}{=} \left( \beta^{(1)}_{\left( \int^{u}_{0} dv \: \exp(2B_{v}) \right)} , \beta^{(2)}_{\left( \int^{u}_{0} dv \: \exp(4B_{v}) \right)} \right),
\eeq
where $(\beta^{(1)}_{v},v\geq 0)$ and $(\beta^{(2)}_{v},v\geq 0)$ are two dependent Brownian motions and $(B_{v},v\geq 0)$ is another Brownian motion independent from them. Now, from (\ref{eqlaw}), we obtain (\ref{densityp}).
\end{proof}
\begin{rem}
From (\ref{eqlaw}), with $p_{u}(x,y)$ denoting now the density function of the couple $(\sinh (B^{(1)}_{u}), \frac{1}{2} \sinh (2B^{(2)}_{u}))$, we have:
\beq\label{densityp}
p_{u}(x,y) = E \left[ \frac{1}{2\pi} \frac{\exp \left( -x^{2}/ 2 \int^{u}_{0} dv \: \exp(2B_{v}) \right)}{\sqrt{\int^{u}_{0} dv \: \exp(2B_{v})}} \frac{\exp \left( -y^{2}/ 2 \int^{u}_{0} dv \: \exp(4B_{v}) \right)}{\sqrt{\int^{u}_{0} dv \: \exp(4B_{v})}} \right].
\eeq
In theory, we should be able to compute this probability density as we know the joint distribution of the couple of exponential functionals
(see e.g. \cite{AMSh01}).
\end{rem}

\subsection{A three-dimensional extension}
Alili, Dufresne and Yor, in \cite{ADY97}, obtained a 3-dimensional extension of Bougerol's identity:
\begin{prop}\label{propADY2}
The two following processes have the same law:
\beq
\left\{e^{B_{t}}\int^{t}_{0}e^{-B_{u}}d\beta_{u},B_{t},\beta_{t};t\geq 0\right\}
\stackrel{(law)}{=}\left\{\sinh(B_{t}),B'_{t},G'_{t};t\geq 0\right\},
\eeq
where:
\beq
\left\{
  \begin{array}{ll}
    B'_{t}= \int^{t}_{0} \tanh(B_{s}) dB_{s}+ \int^{t}_{0} \frac{dG_{s}}{\cosh(B_{s})} \ ; \\
    G'_{t}= \int^{t}_{0}\frac{dB_{s}}{\cosh(B_{s})}- \int^{t}_{0}\tanh(B_{s})dG_{s} \ ,
  \end{array}
\right.
\eeq
with $(G_{t},t\geq 0)$ denoting another Brownian motion, independent from $B$.
\end{prop}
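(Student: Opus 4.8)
The plan is to verify the identity in law at the level of processes by showing that both sides are (weak) solutions of the same SDE system with Lipschitz coefficients, exactly in the spirit of the Alili--Dufresne--Yor proof of Bougerol's identity recalled in the Introduction and of Proposition \ref{propADY}. First I would set $S_{t}=e^{B_{t}}\int_{0}^{t}e^{-B_{u}}d\beta_{u}$ and apply It\^{o}'s formula, as in \eqref{Q1}: one gets $dS_{t}=\tfrac12 S_{t}\,dt+(S_{t}\,dB_{t}+d\beta_{t})$, and the martingale part $S_{t}\,dB_{t}+d\beta_{t}$ has bracket $(1+S_{t}^{2})\,dt$, so $dS_{t}=\sqrt{1+S_{t}^{2}}\,d\delta_{t}+\tfrac12 S_{t}\,dt$ for a Brownian motion $\delta$ built from $B,\beta$. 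The key observation is that the triple $(S_{t},B_{t},\beta_{t})$ is then a diffusion in $\mathbb{R}^{3}$ whose generator I would write down explicitly: the $B$ and $\beta$ components are independent Brownian motions, and $S$ has the drift $\tfrac12 S$ and diffusion coefficient $\sqrt{1+S^{2}}$ against $\delta$, whose correlations with $B$ and with $\beta$ are $S/\sqrt{1+S^{2}}$ and $1/\sqrt{1+S^{2}}$ respectively (reading off $d\langle S,B\rangle_{t}=S_{t}\,dt$ and $d\langle S,\beta\rangle_{t}=dt$ directly from $dS_{t}=S_{t}\,dB_{t}+d\beta_{t}$).

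Next I would compute the generator of the right-hand process $(\sinh(B_{t}),B'_{t},G'_{t})$. Writing $x_{t}=\sinh(B_{t})$, It\^{o}'s formula gives $dx_{t}=\cosh(B_{t})\,dB_{t}+\tfrac12\sinh(B_{t})\,dt=\sqrt{1+x_{t}^{2}}\,dB_{t}+\tfrac12 x_{t}\,dt$, matching the diffusion and drift of $S$. It remains to check the cross-brackets: from the definitions of $B'$ and $G'$ one has $d\langle B',G'\rangle_{t}=0$ and $d\langle B'\rangle_{t}=d\langle G'\rangle_{t}=dt$ (using $\tanh^{2}+\mathrm{sech}^{2}=1$), so $(B',G')$ is itself a planar Brownian motion; and $d\langle x,B'\rangle_{t}=\cosh(B_{t})\tanh(B_{t})\,dt=\sinh(B_{t})\,dt=x_{t}\,dt$, while $d\langle x,G'\rangle_{t}=\cosh(B_{t})\cdot\frac{1}{\cosh(B_{t})}\,dt=dt$. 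These are exactly the cross-brackets found on the left-hand side, with $x\leftrightarrow S$, $B'\leftrightarrow B$, $G'\leftrightarrow\beta$. Hence the two $\mathbb{R}^{3}$-valued diffusions have the same generator and the same starting point $(0,0,0)$, so they agree in law as processes.

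The main obstacle is a well-posedness/uniqueness issue: the coefficient matrix involves $\sqrt{1+x_{1}^{2}}$ and the correlation entries $x_{1}/\sqrt{1+x_{1}^{2}}$, which are bounded and Lipschitz in $x_{1}$, so pathwise uniqueness for the SDE system does hold and the martingale-problem argument closes cleanly; but one must be a little careful that the driving Brownian motions on the two sides are genuinely three-dimensional with the stated (degenerate) correlation structure, i.e. that no further independent noise is hidden. I would handle this by exhibiting both processes as adapted functionals of a single planar Brownian motion — on the left $(B,\beta)$, on the right $(B,G)$ — which shows the state space is effectively two-dimensional and the map to the $\mathbb{R}^{3}$-valued process is the same measurable functional on each side; equality in law of the driving pair then propagates to equality in law of the triples. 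For the full details of this reduction I would refer the reader to \cite{ADY97}.
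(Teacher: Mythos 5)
Your proposal is correct and follows essentially the same route as the paper's first proof: compute the generator of each $\mathbb{R}^{3}$-valued triplet via It\^{o}'s formula, check that the diffusion matrices (including the cross-brackets $d\langle x,B'\rangle_{t}=x_{t}\,dt$, $d\langle x,G'\rangle_{t}=dt$) and drifts coincide, and conclude by uniqueness for the associated martingale problem, which you rightly close by noting both triples solve the same Lipschitz SDE driven by a planar Brownian motion ($(B,\beta)$ on the left, $(B',G')$ on the right). Your computations check out, so no further comment is needed.
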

\begin{rem}\label{remADY}
We remark that with:
\beq
\alpha(x)=
\begin{pmatrix}
\tanh(x)	&	-\frac{1}{\cosh(x)}\\
\frac{1}{\cosh(x)}	&	\tanh(x)
\end{pmatrix} ,
\eeq
we have:
\beq\label{inv}
\begin{pmatrix}
dB'_{t}\\
dG'_{t}
\end{pmatrix} = \alpha(B_{t})
\begin{pmatrix}
dB_{t}\\
d\beta_{t}
\end{pmatrix} ,
\eeq
and
$$\left\{\begin{pmatrix}
B'_{t}\\
G'_{t}
\end{pmatrix}, t\geq 0\right\}$$
is a 2-dimensional Brownian motion.
\end{rem}
\begin{proof} \emph{(Proposition \ref{propADY2})} \\
\textit{First proof:}
Using It\^{o}'s formula, we deduce easily that each of these triplets is a Markov process with infinitesimal
generator (in $C^{2}(\mathbb{R}^{3})$):
\beq
\frac{1}{2}(1+x^{2})\frac{d^{2}}{dx^{2}}+\frac{1}{2}\frac{d^{2}}{dy^{2}}+\frac{1}{2}\frac{d^{2}}{dz^{2}}
+x\frac{d^{2}}{dxdy}+\frac{d^{2}}{dxdz}+x\frac{d}{dx}.
\eeq
The proof finishes by the uniqueness (in law) of the solutions of the corresponding
martingale problem. \\ \\
\textit{Second proof:} First, we admit that the identity in law is true.
Then, if we replace on the left hand side $(B_{s})$ by $(B'_{s})$ and $(\beta_{s})$ by $(G'_{s})$,
we have necessarily:
\beq\label{secondpr}
\sinh(B_{t})\stackrel{(law)}{=}e^{B'_{t}}\int^{t}_{0}e^{-B'_{u}}dG'_{t},
\eeq
which is essentially a (partial) inversion formula of the transformation (\ref{inv}). \\
Equation (\ref{secondpr}) can be proved by using It\^{o}'s formula on the right hand side.
\end{proof}
Gruet in \cite{ADY97} also remarked that:
\begin{prop}\label{Gruet}
There exist two independent linear Brownian motions $V$ and $W$ and a diffusion $J$
starting from 0 satisfying the following equation
\beq
dJ_{t}=dW_{t}+\frac{1}{2} \ \tanh(J_{t})dt,
\eeq
such that,
\beq
\begin{pmatrix}
d\beta_{t}\\
dB_{t}
\end{pmatrix} = \alpha(-J_{t})
\begin{pmatrix}
dV_{t}\\
dW_{t}
\end{pmatrix} .
\eeq
Hence, the two following 3-dimensional processes:
$$\left(\exp\left(B_{t}+\frac{t}{2}\right) \int^{t}_{0}\exp\left(-B_{s}-\frac{s}{2}\right) d\beta_{s},B_{t},\beta_{t};t\geq 0\right)$$
and $$\left(\sinh(J_{t}),B_{t},\beta_{t};t\geq 0\right),$$
are equal.
\end{prop}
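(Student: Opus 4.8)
The plan is to follow the second proof of Proposition~\ref{propADY2}: exhibit one explicit realisation of the three processes on a single probability space and check the asserted equality by It\^{o}'s formula, the only feature absent from Remark~\ref{remADY} being the drift $t/2$. Write $\Sigma_t:=\exp\!\bigl(B_t+\tfrac t2\bigr)\int_0^t\exp\!\bigl(-B_s-\tfrac s2\bigr)\,d\beta_s$ for the first coordinate on the left. It\^{o}'s formula, using that $B$ and $\beta$ are independent so $d\langle B,\beta\rangle=0$, gives $d\Sigma_t=\Sigma_t\,dB_t+d\beta_t+\Sigma_t\,dt$; in particular $\Sigma$ is a diffusion whose martingale part $\Sigma_t\,dB_t+d\beta_t$ has bracket $(1+\Sigma_t^2)\,dt$. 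Applying It\^{o} once more to $J_t:=a(\Sigma_t)$, where $a(x)=\arg\sinh(x)$ so that $a'(x)=(1+x^2)^{-1/2}$ and $a''(x)=-x(1+x^2)^{-3/2}$, one finds $dJ_t=d\delta_t+\tfrac12\tanh(J_t)\,dt$ with $J_0=0$, where $\delta$ is the Brownian motion defined by $\cosh(J_t)\,d\delta_t=\Sigma_t\,dB_t+d\beta_t$ (recall $\cosh(J_t)=\sqrt{1+\Sigma_t^2}$). This $\delta$ is to play the role of $W$ and the SDE for $J$ is exactly the one in the statement; note that this also re-proves, through Proposition~\ref{propADY} and Corollary~\ref{corADY} with $\mu=\tfrac12,\ \nu=0$, the equality in law of the first marginals — the piece of the statement that is to be strengthened to a joint, and in fact pathwise, identity.

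It remains to produce the Brownian motion $V$ and to read off the matrix identity. Since $\tanh^2+\cosh^{-2}=1$, the relation $\cosh(J_t)\,d\delta_t=\Sigma_t\,dB_t+d\beta_t$ rewrites $d\delta_t$ as a linear combination of $dB_t$ and $d\beta_t$ whose coefficients are entries of an orthogonal $2\times2$ matrix; completing this to an orthogonal matrix and defining $V$ by the complementary row, L\'evy's characterisation — precisely the computation recorded in Remark~\ref{remADY} — shows that $(V,W)$ is a planar Brownian motion and that $\bigl(d\beta_t,dB_t\bigr)^{T}=\alpha(-J_t)\bigl(dV_t,dW_t\bigr)^{T}$. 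On this coupling the last two coordinates of the two triplets are literally the same processes, while $\sinh(J_t)=\Sigma_t$ holds by the very definition of $J$; this is the asserted equality. An alternative, closer to the first proof of Proposition~\ref{propADY2}, is to bypass the coupling altogether: It\^{o}'s formula shows both triplets are diffusions, a short computation identifies their infinitesimal generators, and well-posedness of the associated martingale problem then yields the equality in law of the triplets.

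The step I expect to be the real work is the matrix bookkeeping in ``read off the matrix identity'': one must verify that the orthogonal matrix produced above is indeed $\alpha(-J_t)$, with the Brownian motion $W$ attached to the correct coordinate and sign, so that the whole system $\{\,dJ=dW+\tfrac12\tanh(J)\,dt,\ (d\beta,dB)^{T}=\alpha(-J)(dV,dW)^{T}\,\}$ is consistent with $\Sigma_t=\sinh(J_t)$. This is exactly the ``(partial) inversion of the transformation~(\ref{inv})'' alluded to after~(\ref{secondpr}), and it is the one place where the drift $t/2$, the orthogonality of $\alpha$, and the Dambis-Dubins-Schwarz time change of $\Sigma$ have to fit together; everything else is routine It\^{o} calculus. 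One should also keep in mind that the equality holds for this coupled construction and is not a claim about $\exp\!\bigl(B_t+\tfrac t2\bigr)\int_0^t\exp\!\bigl(-B_s-\tfrac s2\bigr)\,d\beta_s$ and $\sinh(J_t)$ when $J$ solves the SDE driven by a Brownian motion unrelated to $(B,\beta)$.
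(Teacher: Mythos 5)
Your argument is correct in substance, but it takes a genuinely different route from the paper's. The paper disposes of Proposition \ref{Gruet} in two sentences, by pointing to Gruet's geometric proof in the appendix of \cite{ADY97}: one writes hyperbolic Brownian motion in the Poincar\'{e} half-plane both in rectangular and in equidistant coordinates, and the rotation $\alpha(-J_t)$ is read off as the change of frame between the two coordinate systems. You instead give the direct It\^{o}-calculus argument: $d\Sigma_t=\Sigma_t\,dB_t+d\beta_t+\Sigma_t\,dt$ (correct, using $d\langle B,\beta\rangle=0$), then $J_t=\arg\sinh(\Sigma_t)$ satisfies $dJ_t=d\delta_t+\tfrac12\tanh(J_t)\,dt$ with $d\delta_t=\tanh(J_t)\,dB_t+\cosh(J_t)^{-1}\,d\beta_t$, and $\delta$ is a Brownian motion by L\'{e}vy's characterisation since $\tanh^2+\cosh^{-2}=1$. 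This is self-contained and, frankly, more informative than the paper's proof (which is really a citation); it is the analytic counterpart of the geometry, it makes transparent why the identity is pathwise on this coupling rather than merely in law, and your observation that the marginal law recovers Corollary \ref{corADY} with $\mu=\tfrac12$, $\nu=0$ is the right sanity check.

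On the ``matrix bookkeeping'' you defer: it can be finished, but not with the matrix as printed. With your (correct) choice $W=\delta$, orthogonality forces $(dV_t,dW_t)^{T}=M^{T}(d\beta_t,dB_t)^{T}$, where $M$ is the matrix appearing in the displayed relation; the second row of $\alpha(-J_t)^{T}$ is $\left(-\cosh(J_t)^{-1},\,-\tanh(J_t)\right)$, so it produces $-d\delta_t$, not $d\delta_t$. The matrix compatible with your $W$ is $\alpha(J_t)^{T}=-\alpha(-J_t)$, with $dV_t=\tanh(J_t)\,d\beta_t-\cosh(J_t)^{-1}\,dB_t$. In other words the Proposition as printed carries a sign slip, of the same kind already visible in Remark \ref{remADY}, where matching the definitions of $B'$ and $G'$ against the displayed matrix relation forces $G=-\beta$. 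So your construction is the right one; you should simply record the corrected matrix (equivalently, flip the sign of $(V,W)$ in the matrix identity while keeping $dJ_t=d\delta_t+\tfrac12\tanh(J_t)\,dt$) rather than trying to make $\alpha(-J_t)$ work literally.
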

\begin{proof}
This result follows from a geometric proof and it is essentially an explanation of the second
proof of Proposition \ref{propADY2}, at least for $\nu=0$.
For this purpose, we can compare the writing of a hyperbolic Brownian motion
in the half-plane of Poincar\'{e}, decomposed in rectangular coordinates
with the equidistant coordinates \cite{Vin93}. For further details, see the Appendix in \cite{ADY97} due to Gruet.
\end{proof}

\section{The diffusion version of Bougerol's identity}\label{secbougdif}

\subsection{Bougerol's diffusion}
Bertoin, Dufresne and Yor in a recent work \cite{BDY12b} generalized Bougerol's identity in terms of diffusions.
First, we remark that from Proposition \ref{propADY} we have that (see also \cite{ADY97}):
\beq\label{bougADY}
\left(\sinh(B_{t}),t\geq0\right)\stackrel{(law)}{=}\left(\exp(-B_{t})\beta_{A_{t}^{(0)}},t\geq0\right).
\eeq
In particular, using Lamperti's relation (see e.g. \cite{Lam72} or \cite{ReY99})
we can invoke a Bessel process $R^{(\delta)}$ independent from $B$
in order to replace the right hand side of (\ref{bougADY}) by:
$$\left(\exp\left(-B^{(\nu)}_{t}\right)R^{(\delta)}_{A^{(\nu)}_{t}},t\geq0\right),$$
which turns out to be a diffusion (named \textit{Bougerol's diffusion}) with a certain infinitesimal generator.
Hence, we obtain the following:
\begin{theo}\label{bougdif}
With $Z=Z^{(\delta)}$ and $Z'=Z^{(\delta')}$ denoting two independent squared Bessel processes of dimension $\delta=2(1+\mu)$
and $\delta'=2(1+\nu)$ respectively, starting from $z$ and $z'$, the process:
\beq\label{X}
X_{t}\equiv X^{(\nu, \delta)}_{t}\equiv \exp\left(-2B^{(\nu)}_{t}\right)Z_{A^{(\nu)}_{t}}=\frac{Z_{u}}{Z'_{u}}\Bigg|_{u=A^{(\nu)}_{t}}, \ t\geq0,
\eeq
is a diffusion with infinitesimal generator:
\beq\label{generator}
2x(1+x)D^{2}+\left(\delta+(4-\delta')x\right)D,
\eeq
where $\delta'=2(1+\nu)$.
\end{theo}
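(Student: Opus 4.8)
The plan is to compute the infinitesimal generator of the process $X_{t}=\exp(-2B^{(\nu)}_{t})\,Z_{A^{(\nu)}_{t}}$ directly, using It\^{o}'s formula together with Lamperti's relation, and to verify that it coincides with \eqref{generator}. First I would record Lamperti's relation in the form needed here: if $(Z_{u},u\geq0)$ is a squared Bessel process of dimension $\delta=2(1+\mu)$ started at $z$, independent of $(B^{(\nu)}_{t},t\geq0)$, then the time-changed process $Z_{A^{(\nu)}_{t}}$ admits the stochastic differential representation coming from $dZ_{u}=2\sqrt{Z_{u}}\,d\gamma_{u}+\delta\,du$ evaluated at $u=A^{(\nu)}_{t}$, where $dA^{(\nu)}_{t}=\exp(2B^{(\nu)}_{t})\,dt$ and $\gamma$ is a Brownian motion independent of $B^{(\nu)}$. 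Writing $Y_{t}\equiv Z_{A^{(\nu)}_{t}}$, this gives $dY_{t}=2\sqrt{Y_{t}}\,\exp(B^{(\nu)}_{t})\,d\tilde\gamma_{t}+\delta\exp(2B^{(\nu)}_{t})\,dt$ for a suitable Brownian motion $\tilde\gamma$, still independent of $B^{(\nu)}$.

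Next I would apply It\^{o}'s formula to $X_{t}=f(B^{(\nu)}_{t},Y_{t})$ with $f(b,y)=e^{-2b}y$. Since $B^{(\nu)}_{t}=B_{t}+\nu t$ and $Y$ is driven by the independent Brownian motion $\tilde\gamma$, the cross-variation term vanishes, and one collects: the $e^{-2b}dY_{t}$ term, the $-2e^{-2b}y\,dB^{(\nu)}_{t}$ term, the $\tfrac12\cdot 4 e^{-2b}y\,d\langle B^{(\nu)}\rangle_{t}=2e^{-2b}y\,dt$ term, and the drift from $\nu$. Substituting the expression for $dY_{t}$ and using $e^{-2b}\exp(2B^{(\nu)}_{t})=1$ and $e^{-2b}\exp(B^{(\nu)}_{t})=\exp(-B^{(\nu)}_{t})=\sqrt{e^{-2b}}$, the diffusion coefficients should reorganize so that the martingale part of $X_{t}$ has quadratic variation $4X_{t}(1+X_{t})\,dt$ (the $2\sqrt{Y_{t}}e^{B^{(\nu)}_{t}}$ piece contributing $4X_{t}$ and the $-2X_{t}\,dB^{(\nu)}_{t}$ piece contributing $4X_{t}^{2}$, with no cross term), and the drift should collapse to $\bigl(\delta+(4-\delta')X_{t}\bigr)\,dt$ once one inserts $\delta'=2(1+\nu)$ so that $4-\delta'=2-2\nu$ matches the coefficient of $X_{t}$ coming from $-2\nu X_{t}+2X_{t}$. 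The second equality in \eqref{X}, namely $X_{t}=Z_{u}/Z'_{u}\big|_{u=A^{(\nu)}_{t}}$, follows from the additivity/scaling property of squared Bessel processes exactly as in the $\delta=2$ case recalled in the Remark after Corollary~\ref{corADY}: $\exp(-2B^{(\nu)}_{t})$ is itself, up to time change, the reciprocal of a squared Bessel process of dimension $\delta'=2(1+\nu)$, and the ratio representation is then a matter of Lamperti's relation applied to both factors.

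The main obstacle I anticipate is purely bookkeeping: correctly matching the Brownian motions and keeping the $\nu$-drift, the It\^{o} correction from $e^{-2b}$, and the Lamperti time-change factor all consistent, so that the three separate contributions to the drift ($\delta\,\exp(2B^{(\nu)})\cdot e^{-2b}=\delta$; $+2X_{t}$ from the It\^{o} term; $-2\nu X_{t}$ from the drift of $B^{(\nu)}$) combine to the stated $\delta+(4-\delta')x$ with $\delta'=2(1+\nu)$. Once the SDE for $X_{t}$ is in hand, reading off the generator $2x(1+x)D^{2}+(\delta+(4-\delta')x)D$ is immediate, and the proof concludes by invoking that $X$ is a (strong Markov) diffusion as a deterministic time change of a Lamperti-type functional, which is already implicit in the construction preceding the statement.
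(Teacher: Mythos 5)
Your proposal is correct and follows essentially the same route as the paper: It\^{o}'s formula applied to the product $\exp(-2B^{(\nu)}_{t})\,Z_{A^{(\nu)}_{t}}$, with the time-changed squared-Bessel SDE supplying the $4x\,dt$ part of the quadratic variation and the $\delta\,dt$ drift, the exponential factor supplying the $4x^{2}\,dt$ part and the $(4-\delta')x\,dt$ drift, and Lamperti's relation $\exp\left(2B^{(\nu)}_{t}\right)=Z'_{A^{(\nu)}_{t}}$ giving the ratio form. One small caution: the time-changed driving motion $\tilde\gamma$ is not independent of $B^{(\nu)}$ (it is built from $B$ through the clock $A^{(\nu)}$, as the paper notes), but it is orthogonal to it, i.e. $\langle\tilde\gamma,B\rangle=0$ because $\gamma\perp B$, which is all that is needed for the cross-variation term to vanish.
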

\begin{rem}
There is a discussion in \cite{JaW12} concerning the particular case where the diffusion with generator given in
(\ref{generator}) is the hyperbolic sine of the radial part of a hyperbolic Brownian motion
(or equivalently the hyperbolic sine of a hyperbolic Bessel process)
of index $\alpha\in(-1/2,\infty)$ (see \cite{JaW12} Theorem 2.25, formula (46), p.15).
In that case, with $R_{t}$ denoting this hyperbolic Bessel process starting from $x$
and $Y_{t}=e^{B_{t}-(\alpha+1/2)t}$, for any $w\geq0$, $t\geq0$,
\beq
\left(\sinh(R_{t}), \ t\geq0\right)\stackrel{(law)}{=}\left(Y_{t}^{-1}S_{\int^{t}_{0}Y_{u}^{2}du}, \ t\geq0\right),
\eeq
where $S$ is a Bessel process of dimension $2(1+\alpha)$ independent of $B$, and $S_{0}=\sinh(x)$.
\end{rem}
\begin{proof} \emph{(Proposition \ref{bougdif})} \\
Applying It\^{o}'s formula to the process $X$, we obtain:
\beq\label{form}
X_{t}=\int^{t}_{0} \exp\left(-2B^{(\nu)}_{u}\right)d\left(Z_{A^{(\nu)}_{u}}\right)
+\int^{t}_{0} Z_{A^{(\nu)}_{u}}d\left(\exp\left(-2B^{(\nu)}_{u}\right)\right).
\eeq
For the second integral in (\ref{form}), It\^{o}'s formula once more yields:
\beqq
d\left(\exp\left(-2B^{(\nu)}_{u}\right)\right)&=&-2\exp\left(-2B^{(\nu)}_{u}\right)\left(dB_{u}+\nu du\right)
+2\exp\left(-2B^{(\nu)}_{u}\right)du \nonumber \\
&=&-2\int^{t}_{0}X_{u}dB_{u}+\underbrace{2(1-\nu)}_{4-\delta'} \int^{t}_{0}X_{u}du \ .
\eeqq
Thus:
\beq\label{form1}
\int^{t}_{0} Z_{A^{(\nu)}_{u}}d\left(\exp\left(-2B^{(\nu)}_{u}\right)\right)
&=&-2\int^{t}_{0}X_{u}dB_{u}+\underbrace{2(1-\nu)}_{4-\delta'} \int^{t}_{0}X_{u}du \ .
\eeq
For the first integral in (\ref{form}), we recall that, with $\gamma$ denoting another Brownian motion
independent from $B$ (thus independent also from $Z$):
\beq
dZ_{s}=2\sqrt{Z_{s}} d\gamma_{s}+\delta \ ds \ .
\eeq
Hence:
\beq\label{form2}
dZ_{A^{(\nu)}_{u}}&=&2\sqrt{Z_{A^{(\nu)}_{u}}} d\gamma_{A^{(\nu)}_{u}}+\delta \exp\left(2B^{(\nu)}_{u}\right) du \nonumber \\
&=& 2\sqrt{Z_{A^{(\nu)}_{u}}} \exp\left(B^{(\nu)}_{u}\right) d\hat{\gamma}_{u}+\delta \exp\left(2B^{(\nu)}_{u}\right) du \ ,
\eeq
with $\hat{\gamma}$ denoting another Brownian motion, depending on $\gamma$ and on $B$. \\
The proof finishes by some elementary computations from (\ref{form}), using (\ref{form1}) and (\ref{form2}).\\
Finally, using Lamperti's relation, which states that:
\beq\label{Lamperti}
\exp\left(2B^{(\nu)}_{t}\right)=Z'_{A^{(\nu)}_{t}} \ ,
\eeq
we obtain the last identity in (\ref{X}).
\end{proof}
We may continue a little further in order to obtain the following result relating
the diffusion $X$ with its reciprocal (recall that: $A^{(\nu)}_{u}=\int^{u}_{0} ds \ \exp\left(2B^{(\nu)}_{s}\right)$):
\begin{cor}\label{difX}
The following relation holds:
\beq
\frac{1}{X^{(\nu, \mu)}_{t}}=X^{(\mu, \nu)}_{\int^{t}_{0}\frac{du}{X^{(\nu, \mu)}_{A^{(\nu)}_{u}}}} \ .
\eeq
\end{cor}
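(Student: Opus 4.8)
The plan is to unwind the definitions and exploit the self-reciprocal structure already built into Bougerol's diffusion. Recall from Theorem \ref{bougdif} that $X^{(\nu,\mu)}_{t}=Z_{u}/Z'_{u}\big|_{u=A^{(\nu)}_{t}}$, where $Z=Z^{(\delta)}$, $Z'=Z^{(\delta')}$ with $\delta=2(1+\mu)$, $\delta'=2(1+\nu)$, and $A^{(\nu)}_{t}=\int_0^t e^{2B^{(\nu)}_s}\,ds$; moreover Lamperti's relation (\ref{Lamperti}) gives $e^{2B^{(\nu)}_t}=Z'_{A^{(\nu)}_t}$. Taking reciprocals, $1/X^{(\nu,\mu)}_t = Z'_u/Z_u\big|_{u=A^{(\nu)}_t}$, which is manifestly of the same shape as $X^{(\mu,\nu)}$ but with the roles of $Z$ and $Z'$ (hence of $\delta$ and $\delta'$, i.e.\ of $\mu$ and $\nu$) interchanged, and clocked by a different additive functional of a Bessel-type process. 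So the content of the corollary is really that the \emph{right} clock, when expressed back in the original time variable $t$, is exactly $\int_0^t du/X^{(\nu,\mu)}_{A^{(\nu)}_u}$.

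\medskip
\emph{First} I would introduce the time change $u = A^{(\nu)}_t$, equivalently $t = H_u$ where $H$ is the inverse of $A^{(\nu)}$, i.e.\ $dH_u = du/e^{2B^{(\nu)}_{H_u}} = du/Z'_u$ by (\ref{Lamperti}). \emph{Second}, note that $1/X^{(\mu,\nu)}$ run from the process pair $(Z',Z)$ is governed, by the very same computation as in the proof of Theorem \ref{bougdif} (symmetry $\mu\leftrightarrow\nu$), by the functional $A^{(\mu)}$ built from a Bessel process driving $Z$; the point is that $Z'_u/Z_u$ is a time-changed version of $X^{(\mu,\nu)}$ where the Lamperti clock is now built out of $Z$ rather than $Z'$. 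Concretely, writing the target as $X^{(\mu,\nu)}$ evaluated at the clock $\int_0^t du/X^{(\nu,\mu)}_{A^{(\nu)}_u}$ and substituting $u=A^{(\nu)}_s$ so that $du = e^{2B^{(\nu)}_s}\,ds = Z'_{A^{(\nu)}_s}\,ds$, one gets
\[
\int_0^t \frac{du}{X^{(\nu,\mu)}_{A^{(\nu)}_u}} = \int_0^{A^{(\nu)}_t} \frac{dv}{X^{(\nu,\mu)}_{A^{(\nu)}_{v}}} \quad\text{(with a further substitution)}\,,
\]
which after the dust settles equals $\int_0^{A^{(\nu)}_t} Z_v\,dv \big/ (\text{something}) $; the clean way to see it is that $X^{(\nu,\mu)}_{A^{(\nu)}_u}=Z_{u'}/Z'_{u'}$ evaluated at $u'=A^{(\nu)}_{A^{(\nu)}_u}$, but I expect the genuinely efficient route is to verify the claimed identity at the level of the driving diffusions $Z,Z'$ directly rather than iterating $A^{(\nu)}$.

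\medskip
\emph{Third} and most cleanly: I would check that both sides, as functions of $t$, are $Z'_u/Z_u$ composed with the \emph{same} strictly increasing clock. The left side is $Z'_u/Z_u\big|_{u=A^{(\nu)}_t}$. For the right side, $X^{(\mu,\nu)}_s = Z_w/Z'_w\big|_{w=A^{(\mu)}_s}$ where $A^{(\mu)}$ is the exponential functional attached to the Brownian motion Lampertizing $Z$; its reciprocal logic means $X^{(\mu,\nu)}$ run at clock $s=\int_0^t du/X^{(\nu,\mu)}_{A^{(\nu)}_u}$ is $Z_w/Z'_w$ at $w = A^{(\mu)}_{\int_0^t du/X^{(\nu,\mu)}_{A^{(\nu)}_u}}$, and one shows this $w$ equals $A^{(\nu)}_t$ by differentiating: $dw = e^{2(\text{drift of }Z)} \cdot d(\text{clock})$, and $d(\text{clock}) = dt/X^{(\nu,\mu)}_{A^{(\nu)}_t} = dt\cdot Z'_{A^{(\nu)}_t}/Z_{A^{(\nu)}_t}$; combined with $dA^{(\nu)}_t = e^{2B^{(\nu)}_t}dt = Z'_{A^{(\nu)}_t}dt$ and Lamperti's relation for the $Z$-clock, the two differentials match.

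\medskip
\textbf{Main obstacle.} The bookkeeping with \emph{which} Brownian motion Lampertizes \emph{which} squared Bessel process is the delicate part: $A^{(\nu)}$ uses $B^{(\nu)}$, which is tied to $Z'$ via (\ref{Lamperti}), while passing to $X^{(\mu,\nu)}$ forces one to introduce the analogous functional for $Z$, and these two functionals are \emph{not} independent — they are coupled through the same driving randomness. So the hard step is to make the change-of-clock rigorous (existence, strict monotonicity, the chain rule for the composed time changes) without tacitly assuming independence that isn't there; I would handle this by working pathwise with the explicit Lamperti representations and differentiating the clocks, so that the identity reduces to the single differential identity $dA^{(\nu)}_t / X^{(\nu,\mu)}_{A^{(\nu)}_t} = $ the Lamperti increment of the $Z$-clock, which is then an elementary consequence of (\ref{Lamperti}) and the definition $X^{(\nu,\mu)}=Z/Z'$.
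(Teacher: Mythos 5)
Your approach is the same as the paper's: represent $1/X^{(\nu,\mu)}$ and $X^{(\mu,\nu)}$ as the single ratio $Z'/Z$ read on two different Lamperti clocks, and reduce the corollary to the statement that the two clocks agree, which you verify by differentiating the inverse clocks $H^{(\nu)}_u=\int_0^u ds/Z'_s$ and $H^{(\mu)}_u=\int_0^u ds/Z_s$. Your third step is precisely the paper's identity $H^{(\nu,\mu)}_t=H^{(\mu)}_{A^{(\nu)}_t}$, and your remark that everything must be done pathwise (the two exponential functionals are coupled, not independent) is well taken and is how the paper proceeds.

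One point to tighten, and it is exactly the bookkeeping issue you flag as the main obstacle: when you differentiate the clock you write $dt/X^{(\nu,\mu)}_{A^{(\nu)}_t}=dt\cdot Z'_{A^{(\nu)}_t}/Z_{A^{(\nu)}_t}$, but by the definition $X^{(\nu,\mu)}_t=Z_{A^{(\nu)}_t}/Z'_{A^{(\nu)}_t}$ the right-hand side is $dt/X^{(\nu,\mu)}_{t}$, not $dt/X^{(\nu,\mu)}_{A^{(\nu)}_t}$. Carrying your own substitution $v=A^{(\nu)}_s$, $dv=Z'_{A^{(\nu)}_s}\,ds$ through $H^{(\mu)}_{A^{(\nu)}_t}=\int_0^{A^{(\nu)}_t}dv/Z_v$ gives
\[
H^{(\mu)}_{A^{(\nu)}_t}=\int_0^t\frac{Z'_{A^{(\nu)}_s}}{Z_{A^{(\nu)}_s}}\,ds=\int_0^t\frac{ds}{X^{(\nu,\mu)}_s}\ ,
\]
so the identity your differentials actually establish is $1/X^{(\nu,\mu)}_t=X^{(\mu,\nu)}_{\int_0^t ds/X^{(\nu,\mu)}_s}$; the subscript $A^{(\nu)}_u$ inside the clock of the stated corollary has to be read as ``the ratio $Z/Z'$ at Bessel time $A^{(\nu)}_u$'', which is $X^{(\nu,\mu)}_u$, not a further composition with $A^{(\nu)}$. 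This is a notational slip shared with the statement itself (the paper's own intermediate identity $H^{(\nu,\mu)}_t=H^{(\mu)}_{A^{(\nu)}_t}$ agrees with your computation), not a gap in your argument; just make the time scale of each subscript explicit so that the chain rule visibly closes.
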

\begin{proof}
It follows easily by some relations involving the changes of time:
\beq
A^{(\nu)}_{t}=\int^{t}_{0} ds \ \exp\left(2B^{(\nu)}_{s}\right) ; \ \ A^{(\mu)}_{t}=\int^{t}_{0} ds \ \exp\left(2B^{(\mu)}_{s}\right) ;
\eeq
\beq
H^{(\nu)}_{u}=\int^{u}_{0}\frac{ds}{Z'_{s}}; \ \ H^{(\mu)}_{u}=\int^{u}_{0}\frac{ds}{Z_{s}} \ .
\eeq
Moreover, we remark that $(H^{(\nu)}_{t})$ is the inverse of $(A^{(\nu)}_{t})$ and
$(H^{(\mu)}_{t})$ is the inverse of $(A^{(\mu)}_{t})$. \\
We also need to use:
\beq
H^{(\nu,\mu)}_{u}=\int^{u}_{0}\frac{ds}{X^{(\nu,\mu)}_{s}}; \ \ H^{(\mu,\nu)}_{u}=\int^{u}_{0}\frac{ds}{X^{(\mu,\nu)}_{s}} \ .
\eeq
Simple calculations now yield:
\beq
H^{(\nu,\mu)}_{t}=H^{(\mu)}_{A^{(\nu)}_{t}} \ ,
\eeq
Finally, using (\ref{X}) we have:
\beq
H^{(\mu)}_{A^{(\nu)}_{t}}=\int^{t}_{0} \frac{ds}{X^{(\nu,\mu)}_{A^{(\nu)}_{s}}} \ ,
\eeq
and we obtain easily the result.
\end{proof}

\subsection{Relations involving Jacobi processes}
In this Subsection, we present a particular study of Theorem \ref{bougdif} in terms of
the Jacobi processes $Y^{(\delta,\delta')}$ as introduced in Warren and Yor \cite{WaY98}
(see also the references therein for Jacobi processes),
due to Bertoin, Dufresne and Yor \cite{BDY12b}. First, we recall some results involving Jacobi processes:
\begin{prop}\emph{(Warren and Yor \cite{WaY98}, Proposition 8)} \\
With $T\equiv\inf\{u:Z_{u}+Z'_{u}=0\}$, there exists a diffusion process $\left(Y_{u}\equiv Y^{\delta, \delta'}_{u},u\geq0\right)$
on $[0,1]$, independent from $Z+Z'$ such that:
\beq
\frac{Z_{u}}{Z_{u}+Z'_{u}}=Y_{\int^{u}_{0}\frac{ds}{Z_{s}+Z'_{s}}}, \ u<T.
\eeq
We remark that $Y'=1-Y$ is the Jacobi process with dimensions $(\delta',\delta)$, and
$Y$ has infinitesimal generator:
\beq
2y(1-y)D^{2}+\left(\delta-(\delta+\delta')y\right)D \ .
\eeq
\end{prop}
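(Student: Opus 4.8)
\emph{Proof strategy.} The plan is to reduce the statement to a one-dimensional It\^{o} computation, exploiting the additivity property of squared Bessel processes. Write $dZ_t=2\sqrt{Z_t}\,d\gamma_t+\delta\,dt$ and $dZ'_t=2\sqrt{Z'_t}\,d\gamma'_t+\delta'\,dt$ with $\gamma,\gamma'$ two independent Brownian motions, and set $S_t=Z_t+Z'_t$. Since $d\langle S\rangle_t=4S_t\,dt$, L\'{e}vy's characterization produces a Brownian motion $\beta$ with $dS_t=2\sqrt{S_t}\,d\beta_t+(\delta+\delta')\,dt$; in particular $S$ is a squared Bessel process of dimension $\delta+\delta'$ and its equation is \emph{autonomous}. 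I would then introduce the additive functional $C_t=\int_0^t ds/(Z_s+Z'_s)$, which is finite and strictly increasing on $[0,T)$, write $\sigma=C^{-1}$ for its inverse, and take as candidate $Y_u:=R_{\sigma_u}$, where $R_t:=Z_t/S_t$.

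The heart of the matter is to apply It\^{o}'s formula to $(z,z')\mapsto z/(z+z')$. Using $d\langle Z,S\rangle_t=d\langle Z\rangle_t=4Z_t\,dt$ and $d\langle S\rangle_t=4S_t\,dt$ one checks that the second-order $dt$-contributions cancel, and after simplification
\beq
dR_t=\frac{2\sqrt{Z_t}\,Z'_t}{S_t^{2}}\,d\gamma_t-\frac{2Z_t\sqrt{Z'_t}}{S_t^{2}}\,d\gamma'_t+\frac{1}{S_t}\bigl(\delta-(\delta+\delta')R_t\bigr)\,dt .
\eeq
The martingale part $M$ has bracket $d\langle M\rangle_t=\dfrac{4Z_tZ'_t(Z_t+Z'_t)}{S_t^{4}}\,dt=\dfrac{4R_t(1-R_t)}{S_t}\,dt$, so $M_t=\int_0^t 2\sqrt{R_s(1-R_s)/S_s}\,dW_s$ for some Brownian motion $W$. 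Performing the time change $t=\sigma_u$ (so that $\sigma'_u=S_{\sigma_u}$) turns the equation for $R$ into $dY_u=2\sqrt{Y_u(1-Y_u)}\,d\widetilde W_u+\bigl(\delta-(\delta+\delta')Y_u\bigr)\,du$ with $\widetilde W$ a Brownian motion, i.e. an autonomous diffusion on $[0,1]$ with generator $2y(1-y)D^{2}+(\delta-(\delta+\delta')y)D$, as announced; the behaviour at the endpoints $\{0,1\}$ is the standard boundary classification for Jacobi diffusions.

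It then remains to establish independence of $Y$ from $S=Z+Z'$, which I expect to be the genuinely delicate step. The crucial observation is that $M$ is orthogonal to $\beta$: a direct computation gives $d\langle M,\beta\rangle_t=\dfrac{1}{S_t^{5/2}}\bigl(2Z_tZ'_t-2Z_tZ'_t\bigr)\,dt=0$, hence $\langle W,\beta\rangle\equiv0$, so by L\'{e}vy's theorem $(W,\beta)$ is a planar Brownian motion and $W$, $\beta$ are independent. Because pathwise uniqueness holds for $dS_t=2\sqrt{S_t}\,d\beta_t+(\delta+\delta')\,dt$ (Yamada--Watanabe), $S$ — and therefore $C$ and $\sigma$ — is a measurable functional of $\beta$ alone; conditionally on $\beta$ the equation for $R$ is then an SDE with frozen coefficients driven by the Brownian motion $W$, still independent of the conditioning, and it again has pathwise uniqueness, so $R$, and with it $Y=R\circ\sigma$, is a measurable functional of $W$ and the frozen path of $S$. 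Reading off the conditional law of $Y$ given $\beta$ from the time-changed equation shows it to be the law of the Jacobi$(\delta,\delta')$ diffusion, independently of $\beta$, which gives simultaneously the independence from $Z+Z'$ and the identification of the generator. Finally, $Y'=1-Y$ is the Jacobi process of dimensions $(\delta',\delta)$ by exchanging the roles of $Z$ and $Z'$ (equivalently, by applying It\^{o} to $1-Y$). The main obstacle, I expect, will be making this conditioning argument watertight — in particular the pathwise-uniqueness / measurable-solution-map step, handled with care at the times where $Z$ or $Z'$ visit $0$ and as $t\uparrow T$ — rather than the It\^{o} computations themselves, which are routine.
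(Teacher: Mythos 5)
Your proposal is correct, and it is worth noting that the paper itself offers no proof of this statement: it is quoted verbatim from Warren and Yor \cite{WaY98}, so there is nothing internal to compare against. Your argument is the standard skew-product decomposition for a pair of independent squared Bessel processes, and the computations all check out: additivity gives $dS_t=2\sqrt{S_t}\,d\beta_t+(\delta+\delta')\,dt$; for $R=Z/S$ the second-order terms cancel exactly because $d\langle Z,S\rangle_t=4Z_t\,dt$ and $d\langle S\rangle_t=4S_t\,dt$; the bracket $d\langle M\rangle_t=4R_t(1-R_t)S_t^{-1}\,dt$ and the orthogonality $d\langle M,\beta\rangle_t=0$ are as you state; and the time change $C_t=\int_0^t S_s^{-1}\,ds$ converts the drift and the bracket into those of the autonomous generator $2y(1-y)D^2+(\delta-(\delta+\delta')y)D$. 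The only step that genuinely needs tightening is the one you already flag, the independence. Two small points there: (i) to define $W$ from $M$ you must adjoin an auxiliary independent Brownian motion on the (Lebesgue-null, for $\delta,\delta'>0$) set where $R(1-R)=0$, and this does not disturb $\langle W,\beta\rangle\equiv 0$; (ii) the ``conditioning on $\beta$'' can be packaged cleanly via Knight's theorem applied to the orthogonal local martingales $M$ and the martingale part of $S$, together with pathwise uniqueness (Yamada--Watanabe, since $y\mapsto\sqrt{y(1-y)}$ is H\"older of exponent $1/2$) which makes $S$ a measurable functional of $\beta$ alone and $Y$ a measurable functional of its own driving Brownian motion alone; independence of the two driving motions then transfers to independence of $Y$ and $Z+Z'$. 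With that, the proof is complete; the identification of $1-Y$ as the Jacobi$(\delta',\delta)$ process follows by symmetry as you say.
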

Now, $X$ defined in (\ref{X}) and $Y$ can be related as following:
\begin{prop}\label{XY}
The following relation holds:
\beq\label{XY1}
\frac{Y_{w}}{1-Y_{w}}=X_{\int^{w}_{0}\frac{dv}{Y'_{v}}}=X_{\int^{w}_{0}\frac{dv}{1-Y_{v}}} \ ,
\eeq
or equivalently:
\beq\label{XY2}
X_{k}=\frac{Y_{w}}{1-Y_{w}}\Bigg|_{w=\int^{k}_{0}\frac{dv}{1+X_{v}}} \ .
\eeq
\end{prop}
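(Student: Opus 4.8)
The plan is to prove Proposition \ref{XY} by relating the two diffusions $X$ and $Y$ through their common construction from the pair of squared Bessel processes $Z = Z^{(\delta)}$ and $Z' = Z^{(\delta')}$, which are the building blocks of both Theorem \ref{bougdif} and the Warren--Yor representation recalled just above. The key observation is that $X_u = Z_u / Z'_u$ (read off from \eqref{X} after the time change $u = A^{(\nu)}_t$) and $\frac{Y_w}{1-Y_w} = \frac{Z_s/(Z_s+Z'_s)}{Z'_s/(Z_s+Z'_s)} = \frac{Z_s}{Z'_s}$ after the Warren--Yor time change $w = \int_0^s \frac{dr}{Z_r+Z'_r}$; so both processes, once the respective clocks are inverted, are literally the \emph{same} path $s \mapsto Z_s/Z'_s$, and the whole content of the proposition is bookkeeping about how the two clocks are related.

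First I would set $\rho_s \equiv Z_s/Z'_s$ and record the two time changes explicitly: $X$ is $\rho$ run along the clock $A^{(\nu)}$, i.e.\ $X_t = \rho_{A^{(\nu)}_t}$ with inverse clock $H^{(\mu)}_u = \int_0^u \frac{dr}{Z_r}$ as in the proof of Corollary \ref{difX} (here I am using $\exp(2B^{(\nu)}_t) = Z'_{A^{(\nu)}_t}$ from Lamperti's relation \eqref{Lamperti}); and $\frac{Y_w}{1-Y_w}$ is $\rho$ run along the Warren--Yor clock $C_s \equiv \int_0^s \frac{dr}{Z_r + Z'_r}$. Then I would compute the clock that turns one representation into the other. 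Starting from $X$ at time argument $k$, we have $X_k = \rho_{\sigma}$ where $\sigma = A^{(\nu)}_k$; I want to express $\sigma$ as a value of $C$, i.e.\ find $w$ with $C^{-1}(w) = \sigma$. A short computation with the substitution $dr = dZ'$-clock relations gives $w = C_\sigma = \int_0^\sigma \frac{dr}{Z_r+Z'_r}$, and I would rewrite $\int_0^\sigma \frac{dr}{Z_r+Z'_r}$ back in terms of the original time variable via $r = A^{(\nu)}_v$, $dr = \exp(2B^{(\nu)}_v)\,dv = Z'_{A^{(\nu)}_v}\,dv$, so that $\frac{dr}{Z_r+Z'_r} = \frac{Z'_{A^{(\nu)}_v}\,dv}{Z_{A^{(\nu)}_v} + Z'_{A^{(\nu)}_v}} = \frac{dv}{\rho_{A^{(\nu)}_v} + 1} = \frac{dv}{X_v + 1}$. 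This yields $w = \int_0^k \frac{dv}{1 + X_v}$, which is exactly \eqref{XY2}. Equation \eqref{XY1} is then the inverse statement: inverting $k \mapsto w = \int_0^k \frac{dv}{1+X_v}$ and noting that $1 + X_v = 1 + \rho = \frac{Z+Z'}{Z'}$ converts the increment $dv$ back into $\frac{dv'}{Y'_{v'}} = \frac{dv'}{1-Y_{v'}}$ along the $Y$-clock, giving $\frac{Y_w}{1-Y_w} = X_{\int_0^w dv/(1-Y_v)}$; the second equality in \eqref{XY1} is just $Y' = 1-Y$.

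The step I expect to be the main obstacle is keeping the three clocks ($A^{(\nu)}$, $H^{(\mu)}$, and the Warren--Yor clock $C$) and their inverses straight, and in particular being careful about the domain of validity: the Warren--Yor representation holds only for $s < T = \inf\{u : Z_u + Z'_u = 0\}$, so one must check that the relevant time horizons match up (for the parameter ranges in play $Z+Z'$ is a squared Bessel process of dimension $\delta+\delta' \ge 2$ and does not hit $0$, so $T = \infty$ a.s.\ and there is nothing to worry about, but this should be remarked). A secondary point of care is that the Brownian motions driving the various time-changed martingale parts are genuinely the same up to the time substitution — this is the familiar Dambis--Dubins--Schwarz / Lamperti mechanics already used repeatedly above, so I would invoke it rather than redo it. Once the clock identity $w = \int_0^k \frac{dv}{1+X_v}$ is established, both \eqref{XY1} and \eqref{XY2} follow by pure inversion of monotone time changes, with no further stochastic input.
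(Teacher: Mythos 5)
Your proposal is correct and takes essentially the same route as the paper: both arguments identify $X$ and $\frac{Y_w}{1-Y_w}$ as the single ratio process $Z/Z'$ read through two different clocks ($A^{(\nu)}$ and the Warren--Yor clock), and then relate those clocks by a change of variables resting on Lamperti's relation (\ref{Lamperti}). The only, immaterial, difference is the direction of the bookkeeping: you establish the clock identity $\int^{A^{(\nu)}_k}_{0}\frac{dr}{Z_r+Z'_r}=\int^{k}_{0}\frac{dv}{1+X_v}$ and hence (\ref{XY2}) first, deducing (\ref{XY1}) by inverting the monotone time change, whereas the paper proves $H^{(\nu)}_{u}=\int^{\mathcal{H}_u}_{0}\frac{dv}{Y'_v}$ to get (\ref{XY1}) first and then notes (\ref{XY2}) as the inverse statement.
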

\begin{proof} \emph{(Proposition \ref{XY})} \\
First, from (\ref{X}), we have:
$$X_{t}=\frac{Z_{u}}{Z'_{u}}\Bigg|_{u=A^{(\nu)}_{t}} \ . $$
Conversely,
\beq\label{pr1}
\frac{Z_{u}}{Z'_{u}}=X_{H^{(\nu)}_{u}} \ ,
\eeq
where $H^{(\nu)}_{u}=\int^{u}_{0}\frac{ds}{Z'_{s}}$ is the inverse of $A^{(\nu)}$.
However, using the Jacobi process $Y$,
\beq\label{pr2}
\frac{Z_{u}}{Z'_{u}}=\frac{Y_{w}}{1-Y_{w}} \big|_{w=\mathcal{H}_{u}=\int^{u}_{0}\frac{ds}{Z_{s}+Z'_{s}}} \ ,
\eeq
and moreover:
\beq\label{pr3}
H^{(\nu)}_{u}=\int^{u}_{0}\frac{ds}{Z'_{s}}=\int^{u}_{0} \frac{ds}{(Z_{s}+Z'_{s})} \ \frac{1}{(1-Y_{\mathcal{H}_{s}})}
=\int^{\mathcal{H}_{u}}_{0}\frac{dv}{Y'_{v}} \ .
\eeq
Plugging now (\ref{pr3}) to (\ref{pr1}) and comparing to (\ref{pr2}), we obtain (\ref{XY1}). For (\ref{XY2}), it suffices to remark
that $k\rightarrow\int^{k}_{0}\frac{dv}{1+X_{v}}$ is the inverse of the increasing process
$w\rightarrow\int^{w}_{0}\frac{dv}{1-Y_{v}}$ \ .
\end{proof}

\section{Bougerol's identity and peacocks}\label{secbougpeacock}
Hirsch, Profeta, Roynette and Yor in \cite{HPRY11}, studied the processes which are increasing in the convex order, named \textit{peacocks}
(coming from the French term: Processus Croissant pour l'Ordre Convexe, which yields the acronym PCOC).
Let us first introduce a notation: for $W$ and $V$ two real-valued random variables, $W$ is said to be dominated by $V$
for the convex order if, for every convex function $\psi: \mathbb{R}\rightarrow\mathbb{R}$ such that $E[|\psi(W)|] < \infty$
and $E[|\psi(V)|]<\infty$, we have:
\beq
E[\psi(W)] \leq E[\psi(V)],
\eeq
and we write: $W\stackrel{(c)}{\leq}V$. \\
A process $(G_{t},t\geq0)$ is a peacock if, for every $s\leq t$,
$G_{s}\stackrel{(c)}{\leq} G_{t}$. Kelleler's Theorem now (see e.g. \cite{Kel72,HPRY11,HiR12}) states that,
to every peacock, we can associate a martingale (defined possibly on another probability space than $G$). In other words,
there exists a martingale $(M_{t},t\geq0)$ such that, for every fixed $t\geq0$,
\beq
G_{t}\stackrel{(law)}{=}M_{t} \ .
\eeq
The main subject of \cite{HPRY11} is to give several examples of peacocks and the associated martingales. \\
We return now to Bougerol's identity and we remark that (see also \cite{HPRY11}, paragraph 7.5.4, p. 322),
for every $\lambda\geq0$, $\left(\sinh(\lambda B_{t}), t\geq0\right)$ is a peacock with associated martingale
$\left(\lambda \int^{t}_{0} e^{\lambda \beta_{s}} d\gamma_{s}, t\geq0\right)$ (see e.g. (\ref{Q})). \\
Moreover, for every $\lambda$ real, $\left(e^{-\frac{\lambda^{2}}{2}t}\sinh(\lambda B_{t}), \geq0\right)$
is obviously a peacock, as it is a martingale. This is generalized in the following:
\begin{prop}\label{propHPRY}
\emph{(\cite{HPRY11}, Proposition 7.2)} \\
The process $\left(e^{\mu t}\sinh(\lambda B_{t}), \geq0\right)$ is a peacock if and only if $\mu\geq-\frac{\lambda^{2}}{2}$.
\end{prop}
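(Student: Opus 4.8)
The plan is to reduce the statement to a one-dimensional convex-ordering problem in the time parameter, exploiting scaling of Brownian motion. First I would fix $\lambda>0$ (the case $\lambda=0$ is trivial, and $\lambda<0$ follows from $\lambda\to-\lambda$ by symmetry of $B$). By Brownian scaling, $\sinh(\lambda B_t)\stackrel{(law)}{=}\sinh(\lambda\sqrt{t}\,N)$ with $N\sim\mathcal N(0,1)$, so $e^{\mu t}\sinh(\lambda B_t)\stackrel{(law)}{=}e^{\mu t}\sinh(\lambda\sqrt{t}\,N)$. Thus the peacock property is equivalent to: for every convex $\psi$ and every $0\le s\le t$,
\beq
E\!\left[\psi\!\left(e^{\mu s}\sinh(\lambda\sqrt{s}\,N)\right)\right]\le E\!\left[\psi\!\left(e^{\mu t}\sinh(\lambda\sqrt{t}\,N)\right)\right].
\eeq
Writing $f(r,x)=e^{\mu r}\sinh(\lambda\sqrt{r}\,x)$ for $r>0$, I would compute $\partial_r f$ and try to show that $r\mapsto E[\psi(f(r,N))]$ is nondecreasing by differentiating under the expectation.

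The key step will be to show that $\frac{d}{dr}E[\psi(f(r,N))]\ge0$ for all convex $\psi$ precisely when $\mu\ge-\lambda^2/2$. Differentiating, $\frac{d}{dr}E[\psi(f(r,N))]=E[\psi'(f(r,N))\,\partial_r f(r,N)]$, and since $\psi'$ need not be tractable, the standard device is to use that for a centered variable it suffices to understand the conditional structure: I would integrate by parts in the Gaussian variable (Stein / Gaussian integration by parts), using $E[g(N)N]=E[g'(N)]$, to rewrite the derivative in a manifestly sign-definite way. Concretely, $\partial_r f(r,x)=e^{\mu r}\big(\mu\sinh(\lambda\sqrt r\,x)+\tfrac{\lambda x}{2\sqrt r}\cosh(\lambda\sqrt r\,x)\big)$; the term carrying $x$ is the one to which Gaussian IBP applies, turning $E[\psi'(f)\cdot\tfrac{\lambda x}{2\sqrt r}\cosh(\lambda\sqrt r x)e^{\mu r}]$ into an expression involving $\psi''$ (which is $\ge0$ by convexity) plus a correction that combines with the $\mu\sinh$ term. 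The computation should produce $\frac{d}{dr}E[\psi(f(r,N))]=\tfrac{\lambda^2}{2}e^{2\mu r}E[\psi''(f(r,N))\cosh^2(\lambda\sqrt r N)]+\big(\mu+\tfrac{\lambda^2}{2}\big)e^{\mu r}E[\psi'(f(r,N))\sinh(\lambda\sqrt r N)]$ (or some algebraically equivalent rearrangement). Because $\psi'$ is nondecreasing and $x\mapsto\sinh(\lambda\sqrt r x)$ is increasing and odd, $E[\psi'(f(r,N))\sinh(\lambda\sqrt r N)]\ge0$ by an FKG-type correlation inequality (covariance of two comonotone functions of $N$ is nonnegative, noting $E[\sinh(\lambda\sqrt r N)]=0$). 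Hence if $\mu+\lambda^2/2\ge0$ both terms are nonnegative and the peacock property follows.

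For the converse ($\mu<-\lambda^2/2$ forces failure of the peacock property), I would test against a specific convex function, e.g. $\psi(x)=x^2$, for which everything is explicit: $E[(e^{\mu r}\sinh(\lambda\sqrt r N))^2]=e^{2\mu r}E[\sinh^2(\lambda\sqrt r N)]=\tfrac12 e^{2\mu r}(e^{2\lambda^2 r}-1)=\tfrac12(e^{(2\mu+2\lambda^2)r}-e^{2\mu r})$, and checking monotonicity of this in $r$ near $r=0$ (or its derivative at $0$) shows it fails to be nondecreasing once $2\mu+\lambda^2<0$. This pins down the threshold exactly and completes the equivalence.

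The main obstacle I anticipate is the Gaussian integration-by-parts bookkeeping in the key step: keeping track of the $\sqrt r$ factors and the hyperbolic identities ($\cosh^2-\sinh^2=1$, $\tfrac{d}{dx}\sinh=\cosh$) so that the cross terms assemble cleanly into the coefficient $\mu+\lambda^2/2$, and justifying differentiation under the integral sign together with the integrability hypotheses $E[|\psi(\cdot)|]<\infty$ (one may first prove the inequality for smooth compactly-approximated $\psi$ and pass to the limit). An alternative route that sidesteps IBP is to invoke the Carlen--Kerce/Hirsch--Yor criterion: a process of the form $(h(t)\,\varphi_t(N))$ is a peacock as soon as $h$ and the family $\varphi_t$ satisfy a suitable log-concavity/monotonicity condition; but I would prefer the self-contained Gaussian-IBP argument since it also yields the sharp constant directly.
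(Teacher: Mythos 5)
Your sufficiency argument is correct and takes a genuinely different route from the paper's. The Gaussian integration-by-parts identity you predict,
\[
\frac{d}{dr}E\left[\psi(f(r,N))\right]=\frac{\lambda^{2}}{2}e^{2\mu r}E\left[\psi''(f(r,N))\cosh^{2}(\lambda\sqrt{r}N)\right]+\left(\mu+\frac{\lambda^{2}}{2}\right)e^{\mu r}E\left[\psi'(f(r,N))\sinh(\lambda\sqrt{r}N)\right],
\]
does come out of $E[g(N)N]=E[g'(N)]$ exactly as you describe, and the second expectation is nonnegative by the Chebyshev correlation inequality, since $\psi'(f(r,\cdot))$ and $\sinh(\lambda\sqrt{r}\,\cdot)$ are comonotone and the latter is centered. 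The paper's route is softer: it writes $e^{\mu t}\sinh(\lambda B_{t})=e^{(\mu+\lambda^{2}/2)t}\cdot\bigl(e^{-\lambda^{2}t/2}\sinh(\lambda B_{t})\bigr)$, uses that the bracketed process is a martingale (hence a peacock), and that multiplying a centered variable by a constant of modulus $\geq1$ increases it in the convex order; no differentiation under the expectation or regularization of $\psi$ is needed. Your approach buys an explicit derivative formula from which the sharp constant drops out, at the cost of the technical justification you already flag.

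The converse, however, has a genuine gap: the test function $\psi(x)=x^{2}$ does not detect failure on the whole range $\mu<-\lambda^{2}/2$. Your own expression $g(r)=\frac{1}{2}\bigl(e^{(2\mu+2\lambda^{2})r}-e^{2\mu r}\bigr)$ satisfies $g'(0)=\lambda^{2}>0$ for \emph{every} $\mu$, and for $-\lambda^{2}\leq\mu<-\lambda^{2}/2$ it is nondecreasing on all of $[0,\infty)$ (it is then the difference of a nondecreasing and a decreasing function), so the second moment is monotone there and yields no contradiction. The reason is a mismatch of growth rates: $x^{2}$ grows quadratically, so $E[\sinh^{2}(\lambda B_{t})]\asymp e^{2\lambda^{2}t}$ and the threshold this test sees is $\mu<-\lambda^{2}$, not $\mu<-\lambda^{2}/2$. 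The paper instead takes $\psi(x)=|x|$: one has $E[|\sinh(\lambda B_{t})|]=E[\sinh(\lambda|B_{t}|)]=e^{\lambda^{2}t/2}\left(2\Phi(\lambda\sqrt{t})-1\right)\sim e^{\lambda^{2}t/2}$ as $t\to\infty$ (with $\Phi$ the standard normal distribution function), whence $E[|e^{\mu t}\sinh(\lambda B_{t})|]\to0$ when $\mu<-\lambda^{2}/2$; since this quantity is strictly positive for small $t>0$ and would have to be nondecreasing in $t$ for a peacock, the peacock property fails. Replacing your quadratic test by $|x|$ (or by $x^{+}$, which is half of it by symmetry) closes the converse with the correct threshold.
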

\begin{proof}
$\left.i\right)$ First, we suppose $\mu\geq-\frac{\lambda^{2}}{2}$.
Then, for $s<t$:
\beqq
e^{\mu t}\sinh(\lambda B_{t})&=&e^{(\mu+\frac{\lambda^{2}}{2} )t}\left(\sinh(\lambda B_{t})e^{-\frac{\lambda^{2}}{2}t}\right)
\stackrel{(c)}{\geq} e^{(\mu+\frac{\lambda^{2}}{2} )s}\left(\sinh(\lambda B_{t})e^{-\frac{\lambda^{2}}{2}t}\right) \\
&\stackrel{(c)}{\geq}& e^{(\mu+\frac{\lambda^{2}}{2} )s}\left(\sinh(\lambda B_{s})e^{-\frac{\lambda^{2}}{2}s}\right).
\eeqq
$\left.ii\right)$ Conversely, It\^{o}-Tanaka's formula yields:
\beqq
E\left[|\sinh(\lambda B_{t})|\right]=E\left[\sinh(\lambda |B_{t}|)\right]=
e^{\frac{\lambda^{2}}{2}t}\lambda \int^{t}_{0} \frac{ds}{\sqrt{2\pi s}} \ e^{-\frac{\lambda^{2}}{2}s},
\eeqq
hence:
\beqq
E\left[|e^{\mu t} \sinh(\mu B_{t})|\right] \stackrel{t\rightarrow+\infty}{\thicksim}
\lambda e^{(\frac{\lambda^{2}}{2}+\mu)t}\lambda \int^{+\infty}_{0} \frac{ds}{\sqrt{2\pi s}} \ e^{-\frac{\lambda^{2}}{2}s},
\eeqq
which means that if $\mu<-\frac{\lambda^{2}}{2}$,
\beqq
E\left[|e^{\mu t}\sinh(\mu B_{t})|\right] \stackrel{t\rightarrow+\infty}{\longrightarrow}0 \ .
\eeqq
However, $x\rightarrow|x|$ is convex and if $\left(e^{\mu t}\sinh(\mu B_{t}),t\geq0\right)$ was a peacock,
then $E\left[|e^{\mu t}\sinh(\mu B_{t})|\right]$ would increase on $t$, which is a contradiction.
\end{proof}

\section{Further extensions and open questions}\label{secopen}
In this Section, we propose some possible directions to continue studying
and possibly extending Bougerol's celebrated identity in law (for fixed time or as a process).

First, the natural question posed is wether this identity can be extended to higher dimensions.
This very challenging question has already been attempted to be dealt with, and in this paper
we've presented several extensions, at least for the 2-dimensional (and partly for the 3-dimensional) case.

Another natural question is wether we can generalize Bougerol's identity to other processes.
For this purpose, we may think in terms of a diffusion, as introduced in Section \ref{secbougdif}.
It seems more intelligent to start from the right hand side of (\ref{boug}) and try to see, e.g. in (\ref{X}), for every
particular ratio of processes, which is the corresponding process on the left hand side
(this process could be named "\textit{Bougerol's process}").

In particular, it seems interesting to investigate a possible extension in the case of L\'{e}vy or stable processes.
To that end, we could replace the ratio of the two squared independent Bessel processes in (\ref{X})
by e.g. the ratio of two exponentials of L\'{e}vy processes, and investigate the process obtained after the time-change.
However, this perspective is not in the aims of the present work.

Finally, another aspect which could be further studied is the applications that one may obtain by the
subordination method, as presented in Section \ref{secsub}. Following the lines of this Section,
one may retrieve further results and applications, others than for the planar Brownian motion case (see also \cite{BeY12,BDY12a}).

\appendix

\section{Appendix: Tables of Bougerol's Identity and other equivalent expressions}\label{Bougeroltable}
\vspace{10pt}
Using now the notations introduced in the whole text, we can summarize all the results
in the following tables ($u>0$, wherever used is considered as fixed).
\vspace{10pt}

\subsection{Table: Bougerol's Identity in law and equivalent expressions ($u>0$ fixed)}
With $a(x)\equiv \arg \sinh (x)$, and $B$, $\beta$ denoting two independent real Brownian motions,
for $u>0$ fixed, we have: \\
\begin{center}
\begin{tabular}{|l|c|}
  \hline \\
  $\left.1\right)$ \ $\sinh(B_{u}) \stackrel{(law)}{=} \beta_{(A_{u}(B)\equiv\int^{u}_{0}ds\exp(2B_{s}))}$ \ \ \ \ \ (Bougerol's Identity) \\ \\ \hline \\
  $\left.2\right)$ \ $\sinh(|B_{u}|) \stackrel{(law)}{=} |\beta|_{(A_{u}(B))}$ \\ \\ \hline \\
  $\left.3\right)$ \ $\sinh(\bar{B}_{u}) \stackrel{(law)}{=} \bar{\beta}_{(A_{u}(B))}$, \ \ $\bar{B}_{u}=\sup_{s\leq u}\beta_{s}$ \\ \\ \hline \\
  $\left.4\right)$ \ $E \left[ \frac{1}{\sqrt{2\pi A_{u}(B)}} \exp \left( -\frac{x}{2A_{u}(B)} \right) \right] = \frac{1}{\sqrt{2\pi u}} \: \frac{1}{\sqrt{1+x}} \: \exp \left( -\frac{(a(\sqrt{x}))^{2}}{2u} \right)$, \ $x \geq 0$ \\ \\ \hline
\end{tabular}
\end{center}
\vspace{20pt}

\subsection{Table: Bougerol's Identity for other 1-dimensional processes ($u>0$ fixed)}
We use $\mu$, $\nu$ reals and we define: $$B^{(\mu)}_{t}=B_{t}+\mu t, \ \beta^{(\nu)}_{s}=\beta_{t}+\nu t,$$  $$A_{t}^{(\nu)}=\int^{t}_{0}ds \ \exp (2B_{s}^{(\nu)}),$$ $\varepsilon$: a Bernoulli variable in $\{-1,1\}$, $(R_{t},t\geq 0)$
a 2-dimensional Bessel process started at 0, $\Xi$ an arcsine variable, and $(Y^{(\mu,\nu)}_{t},t\geq0)$ a diffusion with infinitesimal generator:
$$\frac{1}{2} \ \frac{d^{2}}{dy^{2}}+ \left(\mu \tanh(y) + \frac{\nu}{\cosh(y)}\right) \frac{d}{dy} \ , $$
starting from $y= \arg \sinh (x)$. $B^{(\mu)}$, $\beta^{(\nu)}$, $\varepsilon$, $\Xi$ and $R$ are independent. Then, for $u>0$ fixed:
\begin{center}
\begin{tabular}{|l|c|}
  \hline
  \\
  $\left.5\right)$ \ $\left(\sinh(Y^{(\mu,\nu)}_{t}),t\geq0\right) \stackrel{(law)}{=}\left(\exp(B^{(\mu)}_{t})\left(x+\int^{t}_{0}\exp(-B^{(\mu)}_{s})d\beta^{(\nu)}_{s}\right),t\geq0\right)$, $x$: fixed \\ \\
  \hline \\
  $\left.6\right)$ \ $\sinh(Y^{(\mu,\nu)}_{u}) \stackrel{(law)}{=}\int^{u}_{0}\exp(B^{(\mu)}_{s})d\beta^{(\nu)}_{s}$, \\ \\ \hline \\
  $\left.7\right)$ \ $\sinh(B_{u}+\varepsilon \ t) \stackrel{(law)}{=}\int^{u}_{0}\exp(B_{s}+s)d\beta_{s}$, \ $\varepsilon$: Bernoulli variable in $\{-1,1\}$ \\ \\ \hline \\
  $\left.8\right)$ \ $\beta_{A_{u}^{(\nu)}}\stackrel{(law)}{=} (2\Xi-1) \phi\left(B_{u}^{(\nu)},\sqrt{R_{u}^{2}+(B_{u}^{(\nu)})^{2}}\right)$,
  \ $\phi(x,z)=\sqrt{2e^{x}\cosh(z)-e^{2x}-1}$, $z\geq|x|$ \\ \\ \hline
\end{tabular}
\end{center}
\vspace{20pt}

\subsection{Table: Bougerol's Identity in terms of planar Brownian motion ($u>0$ fixed)}
We define $(Z_{t},t\geq0)$ a planar Brownian motion starting from 1.
Then $\theta_{t}=\mathrm{Im}(\int^{t}_{0}\frac{dZ_{s}}{Z_{s}}),t\geq0$
is well defined. We further define the Bessel clock $H_{t}=\int^{t}_{0}\frac{ds}{\left|Z_{s}\right|^{2}}=A^{-1}_{u}(B)$
and the first hitting times:
$T^{\theta}_{c}\equiv \inf\{t:\theta_{t}=c \}$ and $T^{|\theta|}_{c}\equiv\inf\{t:|\theta_{t}|=c \}$.
Then, with $(C_{c},c\geq0)$ a standard Cauchy process, $C_{y}$ a Cauchy variable with parameter $y$, $N\sim\mathcal{N}(0,1)$,
$(Z^{\lambda}_{t},t\geq 0)$ and $(U^{\lambda}_{t},t\geq 0)$
two independent Ornstein-Uhlenbeck processes, the first one complex valued and the second one real valued,
both starting from a point different from 0 and
$T^{(\lambda)}_{b}(U^{\lambda})= \inf \left\{t\geq 0 : e^{\lambda
t } U^{\lambda}_{t}=b \right\}$, for $b,c>0$ fixed:
\begin{center}
\begin{tabular}{|l|c|}
  \hline \\
  $\left.9\right)$ \ $\sinh(C_{c}) \stackrel{(law)}{=}\beta_{(T^{\theta}_{c})}\stackrel{(law)}{=}\sqrt{T^{\theta}_{c}}N$, $c>0$ fixed \\ \\ \hline \\
  $\left.10\right)$ \ $H_{T^{\beta}_{b}} \stackrel{(law)}{=} T^{B}_{a(b)}$, \ \ \ $T^{B}_{y}=\inf\{t:B_{t}=y\}$, $a(x)= \arg \sinh (x)$, \ \ $b>0$ fixed \\ \\ \hline \\
  $\left.11\right)$ \ $\theta_{T^{\beta}_{b}} \stackrel{(law)}{=} C_{a(b)}$, $b>0$ fixed \\ \\ \hline \\
  $\left.12\right)$ \ $\bar{\theta}_{T^{\beta}_{b}} \stackrel{(law)}{=} |C_{a(b)}|$, \ \ \ $\bar{\theta}_{u}=\sup_{s\leq u}\theta_{s}$ \\ \\ \hline \\
  $\left.13\right)$ \ $E \left[ \frac{1}{\sqrt{2\pi T^{\theta}_{c}}} \exp \left( -\frac{x}{2T^{\theta}_{c}} \right) \right] = \frac{1}{\sqrt{1+x}} \: \frac{c}{\pi(c^{2}+\log^{2}(\sqrt{x}+\sqrt{1+x}))}$, \ \ \ $b>0$ fixed, $x \geq 0$ \\ \\ \hline \\
  $\left.14\right)$ \ $E \left[ \frac{1}{\sqrt{2\pi T^{|\theta|}_{c}}} \exp (-\frac{x}{2T^{|\theta|}_{c}} ) \right] = \left( \frac{1}{c} \right) \left(\frac{1}{\sqrt{1+x}} \right) \frac{1}{(\sqrt{1+x}+\sqrt{x})^{\zeta}+(\sqrt{1+x}-\sqrt{x})^{\zeta}}$, $x \geq 0$, $\zeta=\frac{\pi}{2c}$ \\ \\ \hline \\
  $\left.15\right)$ \ $\theta^{Z^{\lambda}}_{T^{(\lambda)}_{b}(U^{\lambda})}  \stackrel{(law)}{=} C_{a(b)}$ \ \ \ (OU version) \\ \\
  \hline
\end{tabular}
\end{center}
\vspace{20pt}

\subsection{Table: Multi-dimensional extensions of Bougerol's Identity}
In the following table, $(L_{t},t\geq 0)$ and $(\lambda_{t},t\geq 0)$ denote the local times at 0 of $B$, $\beta$ respectively and:
$$ \left( X^{(1)}_{u},X^{(2)}_{u} \right) = \left( \exp(- B_{u}) \int^{u}_{0} d\xi^{(1)}_{v} \exp( B_{v}), \  \exp(- 2B_{u}) \int^{u}_{0} d\xi^{(2)}_{v} \exp( 2B_{v}) \right),$$
where $(\xi^{(1)}_{v},v\geq 0)$, $(\xi^{(2)}_{v},v\geq 0)$ and $(B_{u},u\geq 0)$ are three independent Brownian motions.
Moreover, we denote by $(B^{(1)},B^{(2)})$ and $(\beta^{(1)},\beta^{(2)})$ two couples of dependent Brownian motions
(independent from $B$), such that:
$$d<B^{(1)},B^{(2)}>_{v} = \tanh(B^{(1)}_{v}) \; \tanh(2B^{(2)}_{v}) \; dv,$$
and, for $u>0$ fixed:
$$\left\{
  \begin{array}{ll}
  \sinh (B^{(1)}_{u})\stackrel{(law)}{=} \beta^{(1)}_{\left( \int^{u}_{0} dv \: \exp(2B_{v}) \right)} \ ; \\
  \frac{1}{2}\sinh (2B^{(2)}_{u})\stackrel{(law)}{=}\beta^{(2)}_{\left( \int^{u}_{0} dv \: \exp(4B_{v}) \right)} \ .
  \end{array}
\right.
$$
Finally,
$$\left\{
  \begin{array}{ll}
    B'_{t}= \int^{t}_{0} \tanh(B_{s}) dB_{s}+ \int^{t}_{0} \frac{dG_{s}}{\cosh(B_{s})} \ ; \\
    G'_{t}= \int^{t}_{0}\frac{dB_{s}}{\cosh(B_{s})}- \int^{t}_{0}\tanh(B_{s})dG_{s} \ ,
  \end{array}
\right.
$$
with $(G_{t},t\geq 0)$ denoting another Brownian motion, independent from $B$ and
$J$ a diffusion starting from 0 satisfying: $dJ_{t}=dW_{t}+\frac{1}{2} \ \tanh(J_{t})dt$, where $W$ stands for
an independent Brownian motion. Hence, for $u>0$ fixed:
\begin{center}
\begin{tabular}{|l|c|}
  \hline \\
  $\left.16\right)$ \ $\sinh(L_{u})\stackrel{(law)}{=}\lambda_{A_{u}}$ \\ \\ \hline \\
  $\left.17\right)$ \ $(\sinh(B_{u}),\sinh(L_{u}))\stackrel{(law)}{=}(\beta_{A_{u}},\exp(-B_{u}) \ \lambda_{A_{u}})
  \stackrel{(law)}{=} (\exp(-B_{u}) \ \beta_{A_{u}}, \lambda_{A_{u}})$ \\ \\ \hline \\
  $\left.18\right)$  $(\sinh(|B_{u}|) \ \sinh(L_{u}))\stackrel{(law)}{=}(|\beta|_{A_{u}},\exp(-B_{u}) \ \lambda_{A_{u}})
  \stackrel{(law)}{=} (\exp(-B_{u}) \ |\beta|_{A_{u}}, \lambda_{A_{u}})$ \\ \\ \hline \\
  $\left.19\right)$ \ $(\sinh(\bar{B}_{u}-B_{u}),\sinh(\bar{B}_{u})) \stackrel{(law)}{=}\left((\bar{\beta}-\beta)_{A_{u}},\exp(-B_{u}) \ \bar{\beta}_{A_{u}}\right) \stackrel{(law)}{=} (\exp(-B_{u}) \ (\bar{\beta}-\beta)_{A_{u}}, \bar{\beta}_{A_{u}})$ \\ \\ \hline \\
  $\left.20\right)$ \ $\left(\sinh(B_{t}),L_{t}, \ t\geq0\right)\stackrel{(law)}{=}
  \left(\exp(-B_{t}) \ \beta_{A_{t}}, \int^{t}_{0} \exp(-B_{s})d\lambda_{A_{s}}, \ t\geq0\right)$ \\ \\ \hline \\
  $\left.21\right)$ \
  $\left( X^{(1)}_{t},X^{(2)}_{t}, t\geq0 \right)\stackrel{(law)}{=}\left( \sinh (B^{(1)}_{t}), \frac{1}{2} \sinh (2B^{(2)}_{t}), t\geq0 \right)$
  \\ \\ \hline \\
  $\left.22\right)$ \
  $\left( X^{(1)}_{u},X^{(2)}_{u} \right)\stackrel{(law)}{=} \left( \beta^{(1)}_{\left( \int^{u}_{0} dv \: \exp(2B_{v}) \right)} , \beta^{(2)}_{\left( \int^{u}_{0} dv \: \exp(4B_{v}) \right)} \right)$ \\ \\ \hline \\
  $\left.23\right)$ \ $\left(e^{B_{t}}\int^{t}_{0}e^{B_{u}}d\beta_{u},B_{t},\beta_{t};t\geq 0\right)
  \stackrel{(law)}{=}\left(\sinh(B_{t}),B'_{t},G'_{t};t\geq 0\right)$ \\ \\ \hline \\
  $\left.24\right)$ \ $\left(\exp\left(B_{t}+\frac{t}{2}\right) \int^{t}_{0}\exp\left(-B_{s}-\frac{s}{2}\right) d\beta_{s},B_{t},\beta_{t};t\geq 0\right)\stackrel{(law)}{=} \left(\sinh(J_{t}),B_{t},\beta_{t};t\geq 0\right)$ \\ \\ \hline
\end{tabular}
\end{center}
\vspace{20pt}

\subsection{Table: Diffusion version of Bougerol's Identity (relations involving the Jacobi process)}
Let $Z\equiv Z^{(\delta)}$ and $Z'\equiv Z^{(\delta')}$ be two independent squared Bessel process of dimension $\delta=2(1+\mu)$
and $\delta'=2(1+\nu)$ respectively, starting from $z$ and $z'$, and $X_{t}\equiv X^{(\nu, \delta)}_{t}$
a diffusion (named "Bougerol's diffusion"), with infinitesimal generator:
$$2x(1+x)D^{2}+\left(\delta+(4-\delta')x\right)D,$$
and $Y\equiv Y^{\delta,\delta'}$ the Jacobi process. Then, for $t,w,k>0$:
\begin{center}
\begin{tabular}{|l|c|}
  \hline \\
  $\left.25\right)$ \ $X^{(\nu, \delta)}_{t}\equiv \exp\left(-2B^{(\nu)}_{t}\right)Z_{A^{(\nu)}_{t}}=\frac{Z_{u}}{Z'_{u}}\Big|_{u=A^{(\nu)}_{t}}$ \\ \\ \hline \\
  $\left.26\right)$ \ $\frac{Y_{w}}{1-Y_{w}}=X_{\int^{w}_{0}\frac{dv}{Y'_{v}}}=X_{\int^{w}_{0}\frac{dv}{1-Y_{v}}}$ \\ \\ \hline \\
  $\left.27\right)$ \ $X_{k}=\frac{Y_{w}}{1-Y_{w}}\Big|_{w=\int^{k}_{0}\frac{dv}{1+X_{v}}}$ \\ \\ \hline
\end{tabular}
\end{center}

\vspace{30pt}
\noindent\textbf{Acknowledgements} \\
The author is indebted to Professor Marc Yor for useful comments and
advice, and to Professor Ron Doney for his invitation as a Post Doc fellow
at the University of Manchester, where he prepared this survey.
The author would also like to thank an anonymous referee for providing
useful comments and for pointing out the last part of Remark \ref{BDYrem}
concerning the comparison with the Wiener-Hopf factorization of Brownian motion
and references \cite{AMSh01} and \cite{JaW12}.

\vspace{10pt}


\begin{thebibliography}{99999}
\addcontentsline{toc}{section}{Bibliography}

    \bibitem[ADY97]{ADY97} L. Alili, D. Dufresne and M. Yor (1997). Sur l'identit\'{e} de Bougerol pour les fonctionnelles exponentielles du mouvement Brownien avec drift. In \textit{ Exponential Functionals and Principal Values related to Brownian Motion. A collection of research papers; Biblioteca de la Revista Matematica, Ibero-Americana}, ed. M. Yor, p. 3-14.

    \bibitem[AlG97]{AlG97} L. Alili and J.C. Gruet (1997). An explanation of a generalised Bougerol's identity in terms of Hyperbolic Brownian Motion. In \textit{ Exponential Functionals and Principal Values related to Brownian Motion. A collection of research papers; Biblioteca de la Revista Matematica, Ibero-Americana}, ed. M. Yor, p. 15-33.

    \bibitem[AMSh01]{AMSh01} L. Alili, H. Matsumoto and T. Shiraishi (2001). On a triplet of exponential Brownian functionals. \textit{S\'{e}m. Prob. XXXV, Lect. Notes in Mathematics,} \textbf{1755}, Springer, Berlin Heidelberg New York, p. 396-415.

    \bibitem[And87]{And87} D. Andr\'{e} (1887). Solution directe du probl\`{e}me r\'{e}solu par M. Bertrand. \textit{ C. R. Acad. Sci. Paris}, \textbf{105}, p. 436-437.

    \bibitem[BDY12a]{BDY12a} J. Bertoin, D. Dufresne and M. Yor (2012). Some two-dimensional extensions of Bougerol's identity in law for the exponential functional of linear Brownian motion. Preprint. ArXiv: 1201.1495.

    \bibitem[BDY12b]{BDY12b} J. Bertoin, D. Dufresne and M. Yor (2012). A relationship between Bougerol's generalized identity in law and Jacobi processes. In Preparation.

    \bibitem[BeW94]{BeW94} J. Bertoin and W. Werner (1994). Asymptotic windings of planar Brownian motion revisited via the Ornstein-Uhlenbeck process. \textit{S\'{e}m. Prob. XXVIII, Lect. Notes in Mathematics,} \textbf{1583}, Springer, Berlin Heidelberg New York, p. 138-152.

    \bibitem[BeY12]{BeY12} J. Bertoin and M. Yor (2012). Retrieving information from subordination. To appear in Feitschrift volume for Professor Prokhorov.

    \bibitem[BiY87]{BiY87} P. Biane and M. Yor (1987). Valeurs principales associ\'{e}es aux temps locaux browniens. \textit{ Bull. Sci. Math.}, \textbf{111}, p. 23-101.

    \bibitem[Bou83]{Bou83} Ph. Bougerol (1983). Exemples de th\'{e}or\`{e}mes locaux sur les groupes r\'{e}solubles.  \textit{ Ann. Inst. H. Poincar\'{e}}, \textbf{19}, p. 369-391.

    \bibitem[ChY12]{ChY12} L. Chaumont and M. Yor (2012). Exercises in Probability: A Guided Tour from Measure Theory to Random Processes, via Conditioning. Cambridge University Press, 2nd Edition.

    \bibitem[Duf00]{Duf00} D. Dufresne (2000). Laguerre Series for Asian and Other Options. \textit{ Mathematical Finance}, Vol. \textbf{10}, No. 4, p. 407-428.

    \bibitem[DuY11]{DuY11} D. Dufresne and M. Yor (2011). A two dimensional extension of Bougerol's identity in law for theexponential of Brownian motion. Working paper No. 222. Centre for Actuarial Studies, University of Melbourne.

    \bibitem[Dur82]{Dur82} R. Durrett (1982). A new proof of Spitzer's result on the winding of 2-dimensional Brownian motion. \textit{ Ann. Prob.}, \textbf{10}, p. 244-246.

    \bibitem[Gal08]{Gal08} L. Gallardo (2008). Mouvement Brownien et calcul d'It\^{o}. Hermann.

    \bibitem[HPRY11]{HPRY11} F. Hirsch, C. Profeta, B. Roynette and M. Yor (2011). Peacocks and associated martingales, with explicit constructions. Bocconi and Springer Series, vol. 3, Springer.

    \bibitem[HiR12]{HiR12} F. Hirsch and B. Roynette (2012). A new proof of Kellerer's theorem. \textit{ESAIM: Probability and Statistics}, \textbf{16}, p. 48-60.

    \bibitem[ItMK65]{ItMK65} K. It\^{o} and H.P. McKean (1965). Diffusion Processes and their Sample Paths. \textit{Die Grundlehren der Mathematischen Wissenschaften}, \textbf{125}. Springer, Berlin Heidelberg New York.

   \bibitem[JaW12]{JaW12} J. Jakubowski and M. Wisniewolski (2012). On hyperbolic Bessel processes and beyond.
        To appear in \textit{Bernoulli}. Available at http://www.bernoulli-society.org/index.php/publications/bernoulli-journal/bernoulli-journal-papers

    \bibitem[Kel72]{Kel72} H.G. Kellerer (1972). Markov-Komposition und eine Anwendung auf Martingale. \textit{Math. Ann.}, \textbf{198}, p. 99-122.

    \bibitem[Lam72]{Lam72} J. Lamperti (1972). Semi-stable Markov processes I. \textit{Z. Wahr. Verw. Gebiete}, \textbf{22}, p. 205-225.

    \bibitem[Leb72]{Leb72} N.N. Lebedev (1972). Special Functions and their Applications. Revised edition, translated from the Russian and edited by Richard A. Silverman.

    \bibitem[Lev80]{Lev80} P. L\'{e}vy (1980). \OE uvres de Paul L\'{e}vy, Processus
        Stochastiques, Vol. IV. Paris: Gauthier-Villars. Published under the direction of D. Dugu\'{e} with the collaboration of Paul Deheuvels
        and Michel Ib\'{e}ro.

    \bibitem[MaY98]{MaY98} H. Matsumoto and M. Yor (1998). On Bougerol and Dufresne's identities for
        exponential Brownian functionals. \textit{Proc. Japan Acad. Ser. A Math. Sci.}, Volume \textbf{74}, n. 10, p. 152-155.

    \bibitem[MaY05]{MaY05} H. Matsumoto and M. Yor (2005). Exponential functionals of Brownian motion, I: Probability laws at fixed time. \textit{Probab. Surveys} , Volume \textbf{2}, p. 312-347.

    \bibitem[MeY82]{MeY82} P. Messulam and M. Yor (1982). On D. Williams' ''pinching method'' and some applications. \textit{ J. London Math. Soc.}, \textbf{26}, p. 348-364.

    \bibitem[ReY99]{ReY99} D. Revuz and M. Yor (1999). Continuous
        Martingales and Brownian Motion. 3rd ed., Springer, Berlin.

    \bibitem[Spi58]{Spi58} F. Spitzer (1958). Some theorems concerning two-dimensional Brownian Motion. \textit{ Trans. Amer. Math. Soc.} \textbf{87}, p. 187-197.

    \bibitem[Vakth11]{Vakth11} S. Vakeroudis (2011). Nombres de tours de certains processus stochastiques plans
        et applications \`{a} la rotation d'un polym\`{e}re. (Windings of some planar Stochastic Processes
        and applications to the rotation of a polymer). PhD Dissertation, Universit\'{e} Pierre et Marie Curie (Paris VI), April 2011.

    \bibitem[Vak11]{Vak11} S. Vakeroudis (2011). On hitting times of the winding processes of planar Brownian motion and of Ornstein-Uhlenbeck processes, via Bougerol's identity. \textit{Teor. Veroyatnost. i Primenen.}, \textbf{56} (3), p. 566-591. Published also in \textit{SIAM Theory of Probability and its Applications,} (2012) \textbf{56} (3), p. 485-507.

    \bibitem[Vak12]{Vak12} S. Vakeroudis (2012). On the windings of complex-valued Ornstein-Uhlenbeck processes driven by a Brownian motion and by a Stable process. Preprint. ArXiv: 1209.4027.

    \bibitem[VaY11a]{VaY11a} S. Vakeroudis and M. Yor (2012). Integrability properties and Limit Theorems
        for the first exit times from a cone of planar Brownian motion. To appear in \textit{Bernoulli}. ArXiv: 1201.2716.

    \bibitem[VaY11b]{VaY11b} S. Vakeroudis and M. Yor (2012). Some infinite divisibility properties of the reciprocal of planar Brownian motion exit time from a cone. \textit{Electron. Commun. Probab.}, \textbf{17}, Paper No. 23.

    \bibitem[Vin93]{Vin93} E.B. Vinberg (1993). Geometry II, Spaces of constant curvature. \textit{ Encyclop{\ae}dia of Math. Sciences}, \textbf{29}, Springer.

    \bibitem[WaY98]{WaY98} J. Warren and M. Yor (1998). The brownian burglar : conditioning brownian motion by its local time process. \textit{S\'{e}m. Prob. XXXII, Lect. Notes in Mathematics,} \textbf{1583}, Springer, Berlin Heidelberg New York, p. 328-342.

    \bibitem[Wil74]{Wil74} D. Williams (1974). A simple geometric proof of Spitzer's winding number formula for 2-dimensional Brownian motion. University College, Swansea. Unpublished.

    \bibitem[Yor80]{Yor80} M. Yor (1980). Loi de l'indice du lacet Brownien et Distribution de Hartman-Watson. \textit{ Z. Wahrsch. verw. Gebiete}, \textbf{53}, p. 71-95.

    \bibitem[Yor92]{Yor92} M. Yor (1992). On some Exponential Functionals of Brownian Motion. \textit{ Adv. Appl. Prob.}, \textbf{24}, n. 3, p. 509-531.

    \bibitem[Yor97]{Yor97} M. Yor (1997). Generalized meanders as limits of weighted Bessel processes, and an elementary proof of Spitzer's asymptotic result on Brownian windings. \textit{Studia Scient. Math. Hung.} \textbf{33}, p. 339-343.

    \bibitem[Yor01]{Yor01} M. Yor (2001). Exponential Functionals of Brownian Motion and Related Processes. Springer Finance. Springer-Verlag, Berlin.

\end{thebibliography}
\end{document}